\DeclareMathOperator{\End}{End}
\DeclareMathOperator{\Aut}{Aut}
\DeclareMathOperator{\Hom}{Hom}
\DeclareMathOperator{\proj}{proj}
\DeclareMathOperator{\vol}{vol}
\DeclareMathOperator{\AEMO}{AEMO}
\newcommand{\dbar}{\overline{\partial}}
\newcommand{\fhb}{\mathcal{O}_{\mathbb{P}(E)}(1)}
\newcommand{\PE}{\mathbb{P}(E)}
\newcommand{\LichOp}{\mathfrak{D}^* \mathfrak{D}}
\newcommand{\Lconn}{\nabla^{L^*}}
\newcommand{\Lcurv}{F^{\nabla^{L^*}}}
\newcommand{\HorT}{\mathcal{H}}
\newcommand{\VerT}{\mathcal{V}}
\newcommand{\Vtrace}{\Lambda_{\omega_{FS}}}
\newcommand{\Htrace}{\Lambda_{\omega_M}}
\newcommand{\Ftrace}{\Lambda_{\omega_k}}
\newcommand{\Vlap}{\Delta_{\VerT}}
\newcommand{\Hlap}{\Delta_{\HorT}}
\newcommand{\Flap}{\Delta_k}
\newcommand{\Hform}{\mu^* (F^{\nabla})}
\newcommand{\Enu}{N_{\mathfrak{su}(r)}}
\newcommand{\Cinf}{C^{\infty}}
\newcommand{\idd}{i \dbar \partial}
\newcommand{\rk}{{\rm rk}}
\newcommand{\oS}{\overline{S}}
\newcommand{\oC}{\overline{C}}
\newcommand{\oR}{\overline{R}}
\newcommand{\dneu}{\mathfrak{D}}
\newcommand{\bR}{\mathbb{R}}
\newcommand{\bT}{\mathbb{T}}
\newcommand{\grad}{\rm grad}
\newcommand{\coker}{\rm coker}
\providecommand{\brak}[1]{\left(#1\right)}
\providecommand{\sbrak}[1]{\left[#1\right]}
\renewcommand{\P}{\mathbb P}
\renewcommand{\O}{\mathcal O}
\theoremstyle{plain}
        \newtheorem{theorem}{Theorem}
        \newtheorem{proposition}[theorem]{Proposition}
        \newtheorem{lemma}[theorem]{Lemma}
\theoremstyle{definition}
        \newtheorem{definition}[theorem]{Definition}
        \newtheorem{remark}[theorem]{Remark}
\theoremstyle{plain}
        \newtheorem*{theorem*}{Theorem}
        \newtheorem*{proposition*}{Proposition}
        \newtheorem*{lemma*}{Lemma}
        \newtheorem*{corollary*}{Corollary}
        \newtheorem*{conjecture*}{Conjecture}
\theoremstyle{definition}
        \newtheorem*{definition*}{Definition}
        \newtheorem*{remark*}{Remark}
        \newtheorem*{remarks*}{Remarks}
\def\blfootnote{\xdef\@thefnmark{}\@footnotetext}
\begin{document}

\title{Extremal K\"ahler metrics on projectivised vector bundles}
\author{Till {\sc Br\"onnle}}
\date{}

\maketitle

\begin{abstract}
    We prove the existence of extremal, non-csc, K\"ahler metrics on certain unstable projectivised vector bundles $\P (E) \to M$ over a cscK-manifold $M$ with discrete holomorphic automorphism group, in certain adiabatic K\"ahler classes. In particular, the vector bundles $E \to M$ under consideration are assumed to split as a direct sum of stable subbundles $E=E_1 \oplus \dots \oplus E_s$ all having different Mumford-Takemoto-slope, e.g. $\mu(E_1) > \dots > \mu(E_s)$.
\end{abstract}

{\small \tableofcontents}


\section{Introduction}\label{sec:Intro}

In this first section we shall give an overview of the problem we are considering, including an overview of related previous work, and introduce some notation.

\subsection{Previous work}\label{subsec:PrevWork}

Constant scalar curvature K\"ahler metrics (cscK in the sequel)
on projectivised vector bundles in so-called adiabatic K\"ahler classes were first constructed by Y.-J. Hong. 
In his first paper
\cite{Ho1}, Hong considered the case of a cscK base-manifold $(M,J_M,g_M, \omega_M)$
with discrete holomorphic automorphism group; and a Mumford-Takemoto-slope-stable (with respect to $[\omega_M]$) Hermitian holomorphic vector
bundle $E \rightarrow M$ endowed with a Hermitian-Einstein-connection---i.e. the Chern connection corresponding to a Hermitian-Einstein-metric---over it. 
We denote by $L^* \rightarrow \mathbb{P}(E)$ the fibrewise hyperplane
bundle $\fhb$ over $\PE$. The Hermitian-Einstein-connection
$\nabla$ on $E$ induces
a Hermitian connection $\Lconn$ on the line bundle
$L^*$; and we denote its curvature form by $\Lcurv$.
Hong then used an adiabatic
limit technique to construct a cscK-metric on $\pi : \mathbb{P}(E) \rightarrow M$ in
the K\"ahler class
\[ 
\left[ \omega_k  \right] = c_1 (\fhb)+k \pi^* [ \omega_M ], 
\]
for sufficiently large $k$. One of the crucial points of Hong's technique is, that the K\"ahler
metric
\[ 
\omega_k = \left( \frac{i}{2 \pi} \Lcurv \right) + k  \pi^*
\omega_M 
\]
gives an asymptotic approximation to a cscK-metric on $\mathbb{P}(E)$. 
It is because
of this property, that Hong can proceed by finding a formal power series
solution to the cscK-equation on $\mathbb{P}(E)$, which is $\mathcal{O}(k^{-s})$-close
(in a suitable norm)
to a genuine solution, for an integer $s>0$ arbitrarily large. Obtaining
suitable estimates for the scalar curvature map acting on K\"ahler potentials 
on $\mathbb{P}(E)$ and applying
standard elliptic-PDE-theory, Hong is able to deduce the existence of a genuine
cscK-metric on $\mathbb{P}(E)$ for $k \gg 0$ by using an implicit function theorem argument.

Hong's analysis relies essentially on the bundle $E$ being slope-stable and
therefore
also simple (i.e. it only has endomorphisms of the form $\lambda \cdot Id_E$,
with $\lambda \in \mathbb{C}^*$ and $Id_E$ the identity endomorphism).
The simplicity of the vector bundle $E$ is reflected in the linearisation
of the scalar curvature map on K\"ahler potentials
on $\mathbb{P}(E)$ having trivial co-kernel. 

In a second paper on this topic \cite{Ho2}, Hong considered the situation
of a polystable, non-simple Hermitian holomorphic vectorbundle $E=E_1 \oplus \dots
\oplus E_s$ being projectivised over a cscK base manifold $M$ with a non-trivial
Lie algebra $\mathfrak{ham} (M,J_M, \omega_M)$ of Hamiltonian Killing vector fields. The main
difference of this situation to the
above one is, that the lifting of the action of $\mathfrak{ham} (M,J_M, \omega_M)$ will
induce non-trivial Hamiltonian Killing vector fields on $\mathbb{P}(E)$. Moreover,
since $E$ is not simple anymore, the Lie-algebra $\mathfrak{g}_E$ of the
projectivisation of the automorphism group of $E$ will induce a non-trivial
action as well. Hong assumes in \cite{Ho2}, that the Futaki invariant with respect to the K\"ahler class
$[\omega_k]$ and
\[ 
\mathfrak{g}_E + (\text{the lift of}) \, \mathfrak{ham} (M,J_M, \omega_M) 
\]
on $\PE$ is zero. This assumption enables him to
solve the cscK-equation on $\mathbb{P}(E)$, without having to deal with 
any obstruction coming from a non-trivial co-kernel of its linearisation.

Another situation similar to the above ones was considered by J. Fine \cite{F}.
He treated the problem of finding a cscK-metric in adiabatic K\"ahler classes on the total space of a 
Kodaira fibration $X \rightarrow \Sigma$. Here the base is a
complex curve of high genus, and the fibres have genus at least two.
The fibres and the base admit no non-trivial holomorphic vector fields.
From this one can conclude, using the projection formula in cohomology, that the total space $X$ admits no non-trivial 
holomorphic vector fields either. Therefore, the cscK equation on $X \to \Sigma$
is solvable without any further obstructions
(the co-kernel of its linearisation consists of constant functions). The main difference in Fine's
work is, that the fibres of the Kodaira fibration have non-trivial moduli,
which leads to other difficulties in his case.
\begin{remark}The theorem, that a Hermitian holomorphic vector bundle over
a compact K\"ahler manifold admits a Hermitian-Einstein-metric 
(and thus a corresponding Hermitian-Einstein-connection) if and only if it is polystable was proven by Narasimhan-Seshadri, Donaldson and Uhlenbeck-Yau (see \cite{NS,UY,D5}). Usually, this result is referred to as the Hitchin-Kobayashi correspondence.
\end{remark}

\subsection{Introduction to the main problem}\label{subsec:MainProbl}

The situation we are considering differs from the above ones
by the fact that we will be searching for an \emph{extremal, non-cscK-metric}
on a projectivised Hermitian holomorphic vector bundle
\[
\pi : (\mathbb{P}(E),\omega_k) \to (M,\omega_M)
\]
in the K\"ahler class $\left[ \omega_k
\right]= 2 \pi c_1 (\mathcal{O}_{\mathbb{P}(E)}(1)) + k \pi^* \left[ \omega_M \right]$ for $k \gg 0$,
where $\mathcal{O}_{\mathbb{P}(E)}(1)$ is again the fibrewise hyperplane bundle
over $\mathbb{P}(E)$. The crucial difference is, that our vector bundle $E$
will be \emph{slope-unstable}. However, we will assume a certain special
structure
and look at a bundle $E$ which splits as a direct sum of slope-stable subbundles
(again, slope-stable with respect to $[\omega_M]$)
\[ E=E_1 \oplus \dots \oplus E_s,  \]
all having \emph{different slopes}. 
\begin{remark}
For convenience, we shall assume from now on that the slopes $\mu(E_i)$ satisfy
\[\mu(E_1)> \dots > \mu(E_s) .\]
\end{remark}

Since the bundles $E_i \to (M, \omega_M)$ are all stable, 
we can endow each of them with a HE-connection $\nabla_i$, i.e. the Chern-connection corresponding to a Hermitian-Einstein-metric, satisfying
\[ 
i \Lambda_{\omega_M} F^{\nabla_i} = \lambda_i \, Id_{E_i}, \qquad
\lambda_i = const. \in \mathbb{R}.  
\] 
The direct
sum of these connections will give us a (Chern) connection $\nabla=\nabla_1
\oplus \dots \oplus \nabla_s$ on $E$. As above, this 
induces a (Chern) connection $\Lconn$ on $L^* = \fhb$,
the curvature form of which we denote again by $\Lcurv$.
Similar to Hong, we will start with the K\"ahler metric
\[ 
\omega_k = i \Lcurv  + k  \pi^* \omega_M  
\]
and see that it gives us an asymptotic approximation---in a sense to be made precise later---to an
extremal, non-csc K\"ahler metric on $\mathbb{P}(E)$. Our main result is.
\begin{theorem}\label{Thm:MainResult}
Given a cscK manifold $(M,\omega_M)$ with no non-trivial
holomorphic automorphisms and a Hermitian holomorphic vector
bundle $E \rightarrow M$ splitting as a direct sum of stable subbundles
$ E=E_1 \oplus \dots \oplus E_s, $
each of them endowed with a Hermitian-Einstein-connection $\nabla_i$ and
all of them having different Mumford-Takemoto-slope; then  for $k \gg 0$ 
the projectivised vector bundle $\mathbb{P}(E) \to
 (M,\omega_M)$ has an extremal, non-csc K\"ahler-metric
in the K\"ahler class $\left[ \omega_k
\right]= 2 \pi c_1 (\mathcal{O}_{\mathbb{P}(E)}(1)) + k \pi^* \left[ \omega_M \right]$.
\end{theorem}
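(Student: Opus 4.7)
The plan is to follow the adiabatic limit strategy of Hong, adapted for the extremal (rather than constant scalar curvature) equation and taking careful account of the non-trivial automorphism group inherited from the splitting $E = E_1 \oplus \cdots \oplus E_s$. Because the summands are stable and have distinct slopes, $\End(E)$ reduces to $\bC^s$, so diagonal scaling descends to an $(s-1)$-dimensional complex torus $\mathbb{T}$ acting faithfully by biholomorphisms on $\mathbb{P}(E)$ and lifting to Hamiltonian Killing fields for $\omega_k$. Since $M$ has discrete holomorphic automorphism group, no further holomorphic vector fields are induced from the base, so the identity component of $\Aut(\mathbb{P}(E))$ is precisely $\mathbb{T}$. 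Working $\mathbb{T}$-equivariantly throughout is essential: the Futaki invariant along $\mathbb{T}$ does not vanish---precisely because the $\mu(E_i)$ are distinct---which obstructs the cscK equation but permits extremality in the sense of Calabi.

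First I would compute an asymptotic expansion of $\Scal(\omega_k)$ in powers of $k^{-1}$, using the horizontal/vertical splitting of $T\mathbb{P}(E)$ induced by the direct-sum Chern connection $\nabla = \nabla_1 \oplus \cdots \oplus \nabla_s$. The leading-order term at order $k^0$ consists of the fibrewise Fubini--Study scalar curvature plus a non-constant function on $\mathbb{P}(E)$ built from the HE constants $\lambda_i$, which is precisely a Hamiltonian potential for the torus action. Consequently $\omega_k$ fails the cscK equation already at leading order, but $\Scal(\omega_k)$ lies in the space of $\mathbb{T}$-holomorphy potentials to leading order, which is exactly what the extremal equation $\dneu\Scal(\omega) = 0$ demands.

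Next I would construct a formal power series $\omega_k + i\partial\bar\partial \sum_{j \geq 1} k^{-j} \phi_j$ solving the extremal equation modulo $O(k^{-N})$ for any prescribed $N$. The obstruction at each order is a $\mathbb{T}$-invariant smooth function on $\mathbb{P}(E)$, which by the fibration structure decomposes into a vertical part---cleared on the orthogonal complement of the fibrewise holomorphy potentials by inverting the fibrewise Lichnerowicz operator on $\CP^{r-1}$---and a residual horizontal part on $M$, cleared using the Lichnerowicz operator of $(M,\omega_M)$, whose kernel is trivial since $M$ admits no holomorphic vector fields. Crucially, working with the extremal operator in place of the scalar-curvature operator absorbs the surviving $\mathbb{T}$-invariant cokernel order by order, so no additional Futaki-vanishing hypothesis of the sort assumed in \cite{Ho2} is needed.

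The genuine solution is then produced by a quantitative inverse-function-theorem argument applied to the extremal operator on the slice of $\mathbb{T}$-invariant K\"ahler potentials, starting from the formal approximation of sufficiently large order $N$. The main obstacle is obtaining uniform-in-$k$ bounds for the inverse of the linearised operator---a fourth-order Lichnerowicz-type operator---between suitable weighted H\"older or Sobolev spaces, since the fibre diameter stays $O(1)$ while the base direction scales like $k$, causing the operator to degenerate in the adiabatic limit. I expect to handle this by decomposing source and target into fibrewise eigenspaces of the vertical Laplacian and estimating the contribution of each eigenspace in an appropriately scaled norm, in the spirit of Hong and Fine, but systematically quotienting by the $\mathbb{T}$-holomorphy potentials. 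Granted such estimates, the non-linear remainder in the extremal equation becomes a contraction on a small ball around the approximate solution for $k \gg 0$, yielding a genuine extremal K\"ahler metric; it cannot be cscK because its scalar curvature is $C^0$-close to the non-constant leading-order fibrewise Hamiltonian, which by the distinctness of the $\lambda_i$ is obstructed from being constant.
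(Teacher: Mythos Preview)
Your overall strategy matches the paper's: build approximate extremal metrics to arbitrary order by an inductive scheme exploiting the fibrewise and base Lichnerowicz operators, then close up with a quantitative IFT working $\mathbb{T}$-equivariantly so that the torus-Hamiltonian cokernel is absorbed as extra parameters rather than obstructing solvability. A few points of comparison and one correction are worth recording.

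First, a slip: the non-constant Hamiltonian built from the $\lambda_i$ does \emph{not} appear at order $k^0$. The paper's expansion (Lemma~\ref{1storder}) gives
\[
\Scal(\omega_k)=\overline{C}+k^{-1}\bigl(\Scal(\omega_M)+b\,\mu^*(\Lambda_{\omega_M}F^{\nabla})\bigr)+\mathcal{O}(k^{-2}),
\]
so the leading term $\overline{C}=\Scal(\omega_{FS})$ is constant and the fibrewise Hamiltonian $\mu^*(\Lambda_{\omega_M}F^{\nabla})$ enters only at order $k^{-1}$. Correspondingly the extremal vector field and the Hamiltonian $Q(\omega_{k,n})$ are themselves $\mathcal{O}(k^{-1})$, which matters for the bookkeeping in the induction and for the size of the parameter shifts in the IFT.

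Second, your two-way vertical/horizontal decomposition of the error is coarser than what the paper actually uses. The paper splits $C^\infty(\PE)=\Enu\oplus R\oplus C^\infty(M)$, where $\Enu$ consists of fibrewise Hamiltonians for $\mathfrak{su}(r)$ and $R$ is its orthogonal complement among fibrewise mean-zero functions. The $R$-part is handled as you say by inverting the fibrewise Lichnerowicz operator, and the $C^\infty(M)$-part by the base Lichnerowicz operator. But the $\Enu$-part is dealt with by a separate mechanism you do not mention: one perturbs the Hermitian bundle metric $h\mapsto h(1+k^{-1}V)$ and solves a bundle-Laplacian equation $\Delta_{\partial_h}V=\cdots$ on $\End(E)$, whose cokernel is exactly $\langle Id_{E_1},\dots,Id_{E_s}\rangle$; the projection of the error onto this cokernel is what forces the order-by-order adjustment of the torus weights (the $\gamma_{p,l}$ in the paper). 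Your phrase ``absorbs the surviving $\mathbb{T}$-invariant cokernel'' is correct in spirit, but the concrete mechanism is this bundle-metric perturbation, not merely working modulo holomorphy potentials.

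Third, on the analysis: rather than a fibrewise eigenspace decomposition, the paper follows Fine's route of a \emph{local} product model $\mathbb{P}^{r-1}\times B_{\mathrm{flat}}$ (giving uniform Sobolev and elliptic estimates by patching) together with a \emph{global} Riemannian-submersion model $h_k$ to bound the first nonzero eigenvalue of $\LichOp$ from below by $Ck^{-3}$. The cokernel is then handled not by quotienting but by adjoining parameters $(\Theta,\overline{R})\in\mathbb{R}^{s+1}$ to the operator and showing the resulting map $L^2_{m+4,\mathbb{T}^s}\times\mathbb{R}^{s+1}\to L^2_{m,\mathbb{T}^s}$ is a Banach-space epimorphism with right inverse of norm $\mathcal{O}(k^3)$. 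Your eigenspace approach would also work and is closer to Hong's original arguments; the paper's model-comparison approach buys slightly cleaner uniform constants at the cost of more geometric setup.
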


\paragraph{Acknowledgements.}

The work presented here forms part of the author's Ph.D.-thesis. It is a great pleasure to thank my supervisor, Simon  K. Donaldson, for the countless very useful discussions we had during the course of this work.
Also, I would like to thank Joel Fine, Dmitri Panov and Richard Thomas for useful discussions and comments, and Paul Gauduchon for his help, comments and for useful discussions.

\section{Preliminaries and background material}

We shall collect here some background material which we will need in the sequel.

\subsection{Background on extremal K\"ahler metrics}\label{subsec:EKMBackground}

The notion of an extremal K\"ahler metric on a (compact) K\"ahler manifold $(M,J,g,\omega)$ was first introduced by Calabi
in \cite{C1}. They
are defined to be the critical points of the so-called Calabi functional
\begin{equation}\label{eq:CalFunct}
C(\omega)=\int_M (Scal(\omega)- \oS)^2 \frac{\omega^n}{n!}, 
\end{equation}
in some K\"ahler-class $[\omega]$,
where $Scal(\omega)$ denotes the scalar curvature of the metric $g$ corresponding to $\omega$, and $\oS$ its average. Of course, cscK-metrics are
automatically extremal K\"ahler metrics. The converse is not always true,
the first examples of extremal, \emph{non-cscK}-metrics were constructed
by Calabi on Hirzebruch surfaces in \cite[Section~3]{C1}.
\begin{remark}
In the sequel, we will often use the K\"ahler metric $g$ on $(M,J,g,\omega)$
and its associated K\"ahler form $\omega(\cdot, \cdot)=g(J \cdot, \cdot)$ interchangeably.
\end{remark}
\begin{definition}[Extremal K\"ahler metric]\label{def:ExtrKmetr}
A K\"ahler metric $\omega \in [\omega]$ on a compact complex manifold
$(M,J)$ is called extremal (non-cscK) if it is a non-minimal critical point
of the Calabi-functional \eqref{eq:CalFunct}. 
\end{definition}
\begin{definition}[Reduced Automorphism Group]\label{def:RedAutGroup}
For a K\"ahler manifold $(M,J,g,\omega)$, we define the (identity component of the) reduced automorphism group $Aut^0_{red}(M,J)$ to be the subgroup of $Aut^0 (M,J)$, i.e. the identity component of the holomorphic automorphism group of $(M,J)$, generated by (real) holomorphic vector fields with \emph{non-trivial zero-set} on $(M,J)$. 

One can show that $Aut^0_{red}(M,J)$ is the unique linear algebraic subgroup of $Aut^0 (M,J)$ such that the quotient $Aut^0 (M,J) / Aut^0_{red}(M,J)$ is the Albanese torus of $(M,J)$.
\end{definition}
Suppose we are given a K\"ahler manifold $(M,J,g,\omega)$. We shall now choose a \emph{connected maximal compact} subgroup $G_{max}$ of the reduced automorphism group $Aut^0_{red}(M,J)$.

Then, for any $G_{max}$-invariant K\"ahler metric $\omega \in [\omega]^{G_{max}}$---where  $[\omega]^{G_{max}}$ denotes the set of $G_{max}$-invariant K\"ahler metrics (forms) in $[\omega]$---the Lie-algebra $\mathfrak{g}_{max}$ of $G_{max}$ is the space of Hamiltonian Killing vector fields 
(cf. \cite[Introduction~and~Section~1]{FM}). The key point is that the Hamiltonian Killing vector fields in $\mathfrak{g}_{max}$ remain Hamiltonian Killing vector fields as we vary $\omega$ in $[\omega]^{G_{max}}$.

\begin{definition}[Extremal vector field]\label{def:EVF}
For all $V \in \mathfrak{g}_{max}$ we define the extremal vector field \linebreak $X^{G_{max}}_{[\omega]}
\in \mathfrak{g}_{max}$, as the vector field satisfying
\begin{equation}\label{eq:EVFdef}
\mathfrak{F}(V,[\omega])=\langle X^{G_{max}}_{[\omega]}, V \rangle;
\end{equation}
where $\mathfrak{F}(V,[\omega])$ denotes the Futaki-invariant, and $\langle \cdot, \cdot \rangle$ is the Futaki-Mabuchi inner product\footnote[1]{For the definition of the Futaki-Mabuchi inner product, see \cite{FM}.} restricted to $\mathfrak{g}_{max}$. This inner product is \emph{positive definite} on $\mathfrak{g}_{max}$ (cf. \cite[Theorems~A~and~C]{FM}), which is why by duality we can define $X^{G_{max}}_{[\omega]}$ as above. The extremal vector field
$X^{G_{max}}_{[\omega]}$ depends \emph{only} 
on the K\"ahler-class and the choice of $G_{max}$,
in particular it is \emph{independent} of the choice of a $G_{max}$-invariant K\"ahler metric $\omega \in [\omega]^{G_{max}}$
(cf. \cite[Corollary~D]{FM}).
\end{definition}

\begin{remark}\label{rem:EVFtorus}
It was also shown by Futaki-Mabuchi that $X^{G_{max}}_{[\omega]}$ lies in the centre
of $\mathfrak{g}_{max}$ and generates a \emph{torus action} (cf. \cite[Theorem~F]{FM}).
\end{remark}

Calabi computed the Euler-Lagrange equation to his functional \eqref{eq:CalFunct} on a compact K\"ahler manifold $(M,J,g,\omega)$ in \cite{C1}, it is given by (again, $g$ is the metric corresponding to $\omega$)
\begin{equation}\label{eq:ELCalabieq}
\mathcal{L}_{\nabla_g Scal(g)} J=0,
\end{equation}
i.e. $\nabla_g Scal(g)$ is the real part of a holomorphic section of $T^{1,0}M$
(where $\mathcal{L}$ denotes the Lie-derivative).
Restricting to K\"ahler metrics invariant under a chosen maximal connected compact subgroup
of the reduced automorphism group, one can reduce the order of this equation as follows:
According to \cite{C1}, a K\"ahler metric $\omega \in [\omega]^{G_{max}}$ is extremal, if
\begin{equation}\label{eq:ExtrCond1}
Scal(\omega) - H (\omega)- \oS =0,  
\end{equation}
where $H (\omega)$ is a (mean-value zero) Hamiltonian for a Hamiltonian Killing vector field
(in the Lie-algebra $\mathfrak{g}_{max}$)
with respect to $\omega \in [\omega]^{G_{max}}$.

In fact, if equation \eqref{eq:ExtrCond1} is satisfied for a K\"ahler metric $\omega \in [\omega]^{G_{max}}$, then $H(\omega)$ is the (mean-value zero) Hamiltonian of the extremal vector field $X^{G_{max}}_{[\omega]}$ defined as in Definition \ref{def:EVF}. This follows from the definition of the Futaki-invariant and the Futaki-Mabuchi inner product, and the calculation
\begin{equation}\label{eq:HamFutakiCalc}
\int_M H(\omega) H_V \frac{\omega^n}{n!}
= \int_M (Scal(\omega)-\oS) H_V \frac{\omega^n}{n!}=\mathfrak{F}(V,[\omega])
 \overset{.}{=} \langle X^{G_{max}}_{[\omega]}, V \rangle,
\end{equation}
where $H_V$ denotes the (mean-value zero) 
Hamiltonian of any Hamiltonian Killing vector field $V \in \mathfrak{g}_{max}$.

\subsection{Preparatory material}\label{subsec:Prep}

Suppose we are given a rank $r:= \rk (E)$ complex holomorphic vector bundle $(E,h,\nabla)
\rightarrow (M,\omega_M)$, with Hermitian metric $h$ and Chern connection $\nabla$, 
over a (complex) $n$-dimensional K\"ahler manifold $M$. The Chern connection $\nabla$
defines a splitting
of the tangent bundle of $\mathbb{P}(E)$ in its vertical and horizontal components:
$T\mathbb{P}(E)=\VerT \oplus \HorT$, with $\VerT$ being the vertical-and
$\HorT$ being the horizontal tangent bundle.
Moreover, the Chern connection $\nabla$ induces
a Chern connection $\Lconn$ in $L^*=\fhb$; its curvature $\Lcurv$
will be an imaginary two-form. The restriction
of $i \Lcurv$ to a fibre is just the Fubini-Study metric on that fibre---induced by the Hermitian bundle metric $h$.
However, the horizontal components of $i \Lcurv$ are determined by the
curvature $F^{\nabla}$ of the connection $\nabla$ on $E$. 

We denote by $\mu^* : \mathfrak{su}(r) \rightarrow C^{\infty} 
(\mathbb{CP}^{r-1}) $ the co-moment map, which associates to every
$v \in \mathfrak{su}(r)$ its corresponding mean-value zero Hamiltonian
$\mu^* (v)$ with respect to the Fubini-Study metric. Using this co-moment map fibrewise, we get a map
$\mu^* : \Omega_M^0 (\mathfrak{su}(E)) \rightarrow C^{\infty} (
\mathbb{P}(E))$. Taking the tensor product with the pull-back map on $p$-forms $\pi^*
: \Omega_M^p \rightarrow \Omega^p_{\mathbb{P}(E)}$ extends the map $\mu^*$
to a map on $\mathfrak{su}(E)$-valued $p$-forms $\mu^* : \Omega_M^p (\mathfrak{su}(E))
\rightarrow \Omega^p_{\mathbb{P}(E)}$, and by complex linearity to $\End (E)$-valued (complex) $p$-forms. Using this notation, we get the precise
relationship between $F^{\nabla}$ and $\Lcurv$. (The following result and its proof can be found in \cite{FP}.)
\begin{proposition}[cf. Proposition 2.1 in \cite{FP}]\label{FPProp} 
With respect to the vertical-horizontal decomposition of two-forms on $\mathbb{P}(E)$:
$\Lambda^2 T \PE^* \cong \Lambda^2 \VerT^* \oplus \left(
\VerT^* \otimes \HorT^* \right) \oplus \Lambda^2 \HorT^*$, we get
\[ 
i \Lcurv = \omega_{FS} \oplus 0 \oplus \mu^* (F^{\nabla}), 
\]
where $\omega_{FS}$ restricts to the Fubini-Study metric on the fibres.
Moreover, $i \Lcurv$ is a symplectic form if and only if $\mu^* (F^{\nabla})^n$
is nowhere zero.
\end{proposition}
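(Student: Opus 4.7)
The plan is to compute $i\Lcurv$ locally using the description of $L = \mathcal{O}_{\PE}(-1)$ as the tautological subbundle of $\pi^* E$ (so that $L^*$ is its dual), and then to read off the three pieces of the vertical-horizontal decomposition directly from the formula. First I would fix a point $x \in M$ and choose a holomorphic frame $(e_1,\ldots,e_r)$ of $E$ near $x$ that is \emph{Hermitian normal} at $x$, meaning $h_{ij}(x)=\delta_{ij}$ and $d h_{ij}(x) = 0$. With $y$ denoting local coordinates on $M$ around $x$ and $[w_1:\dots:w_r]$ the fibre coordinates induced by the frame, set $w_r = 1$ on the affine chart $\{w_r \neq 0\}$. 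The section
\[
\sigma(y,w) \;=\; \sum_{i=1}^{r-1} w_i\, e_i(y) + e_r(y)
\]
of $\pi^* E$ lies pointwise in $L$, so it provides a local holomorphic frame of $L$, and hence of $L^*$. The Chern curvature of $L^*$ is then
\[
i\,\Lcurv \;=\; i\,\del\delb \log |\sigma|_h^2, \qquad |\sigma|_h^2 \;=\; \sum_{i,j} w_i \bar w_j \, h_{ij}(y).
\]

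I would next split $i\,\del\delb\log|\sigma|_h^2$ according to whether each derivative falls on the $w$- or on the $y$-variables, and evaluate at a point of the central fibre $y = 0$. The purely vertical piece, evaluated at $y = 0$ where $h_{ij}(0)=\delta_{ij}$, reduces to $i\,\del\delb\log\bigl(1 + \sum_{a<r}|w_a|^2\bigr)$, which is the Fubini-Study form $\omega_{FS}$ in the affine chart. The mixed pieces $\del_{w}\delb_{\bar y}$ and $\del_{y}\delb_{\bar w}$ vanish at $y = 0$: the normal-frame condition $dh_{ij}(x)=0$ forces $\delb_{\bar y_k}\log|\sigma|_h^2$ to vanish along $y = 0$, and a further $w$-derivative keeps this zero. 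The purely horizontal piece extracts the quadratic Taylor coefficients $\del\delb h_{ij}(0)$, which in a Hermitian normal frame are precisely the matrix entries of the curvature $F^{\nabla}$; contracting against the rank-one Hermitian projector onto the line $[\sigma(x)] \subset E_x$ gives exactly the fibrewise co-moment map value $\mu^*(F^{\nabla})|_{[\sigma(x)]}$. Unitary equivariance of both sides under change of trivialisation then extends the identity globally on $\PE$.

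For the symplectic claim I would simply expand the top exterior power. Using the decomposition just proved, the vanishing of the $\VerT^* \otimes \HorT^*$-component, and the fact that $\omega_{FS}$ is purely vertical while $\mu^*(F^{\nabla})$ is purely horizontal, all cross terms except one die, giving
\[
(i\,\Lcurv)^{n+r-1} \;=\; \binom{n+r-1}{n}\, \omega_{FS}^{r-1} \wedge \mu^*(F^{\nabla})^n.
\]
Since $\omega_{FS}^{r-1}$ restricts to the Fubini-Study volume form on each fibre (and so is nowhere zero in the vertical direction), $(i\,\Lcurv)^{n+r-1}$ is nowhere zero precisely when $\mu^*(F^{\nabla})^n$ is nowhere zero. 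The main obstacle throughout is bookkeeping for the co-moment map: one has to verify that the rank-one projector arising from the fibrewise quadratic computation agrees, after the mean-value-zero normalisation, with the Hamiltonian $\mu^*$ used in the statement, and to track the signs introduced by passing from $L$ to $L^*$. Once the conventions are chosen consistently at a single trivialisation, the unitary covariance of both sides promotes the identity to a global one on $\PE$.
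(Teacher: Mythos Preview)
The paper does not supply its own proof of this proposition; it simply states that ``the following result and its proof can be found in \cite{FP}''. So there is nothing in the present paper to compare against beyond the citation.

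Your approach is the standard and correct one: fix a Hermitian-normal holomorphic frame for $E$ at a point $x\in M$, write the tautological section $\sigma$ of $L\subset\pi^*E$ in fibre coordinates, and expand $i\,\del\delb\log|\sigma|_h^2$ according to whether the derivatives hit the fibre or base variables. The only point worth making explicit is why the coordinate splitting $(w,y)$ agrees with the vertical-horizontal splitting defined by $\nabla$: in a Hermitian-normal frame the connection one-form of $\nabla$ vanishes at $x$, so over the central fibre the horizontal lift of $\del_{y_l}$ is literally $\del_{y_l}$, and hence your $w$-versus-$y$ decomposition of the two-form really is the $\VerT$-$\HorT$ decomposition at those points. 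With that remark in place, your identification of the three blocks (Fubini--Study, zero, and the rank-one-projector contraction with $F^{\nabla}$) is correct, and the top-power argument for the symplectic criterion is exactly right. The caveat you flag about matching the normalisation of the co-moment map $\mu^*$ (mean-value-zero Hamiltonian versus raw projector $N_A$) and the $L$ versus $L^*$ sign is genuine bookkeeping but presents no obstacle.
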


In the sequel, we consider the natural action of $\End (E)$ on $\PE$,
and shall now describe the associated infinitesimal action.

For any section $A$ of the vector bundle $\End (E)$ of $\mathbb{C}$-endomorphisms
of $E$, denote by $\hat A$ the vertical vector field defined as follows.
Recall that for any $x \in \PE$ with projection $\pi(x)=y$, we can identify
the vertical (real) tangent space $T_x^V \PE$ at $x$  naturally with the space \linebreak
$\Hom (x, E_y / x) \cong \Hom (x,x^{\perp})$ of $\mathbb{C}$-linear homomorphisms from the complex line $x$ to the orthogonal subspace $x^{\perp}$ to $x$ in $E_y$; where we identified $x^{\perp} \cong E_y / x$ (with the space
on the right hand side having a holomorphic structure).
\begin{remark}\label{rem:VertQuotBundle}
Since we identified $x^{\perp} \cong E_y / x$, 
we in fact defined a holomorphic structure on $\Hom (x,x^{\perp}) \ (\cong \Hom (x, E_y / x))$.
\end{remark}
\begin{definition}[Infinitesimal action induced on $\PE$ by $\End (E)$]\label{def:InfAction}
We define the vertical vector field $\hat A (x):v \mapsto Av-N_A (x) v$, for any $v$ in $x \subset
E_y$, by setting 
\begin{equation}\label{eq:HamForExpl}
N_A (x)=\frac{(Au,u)_h}{|u|_h^2};
\end{equation} 
where $u$ stands for any generator of $x$ in $E_y$ and $(\cdot, \cdot)_h$ denotes the Hermitian inner product with respect to the bundle metric $h$. 
\end{definition}
\begin{remark}
If $A$ is a constant multiple of the identity, then $\hat A$ is indeed zero,
as it should be, and $N_A$ is constant on each fibre.
\end{remark}
If $A$ is skew-hermitian, the restriction of the vertical
vector field $\hat A$ to a fibre $\mathbb{P}(E_y)$ is a Hamiltonian Killing
and real holomorphic vector field with respect to the Fubini-Study metric
on
$\mathbb{P}(E_y)$, induced by the Hermitian metric $h$ on $E$.
The fibrewise Hamiltonian of this vector field with respect to this (Fubini-Study) metric
is just $- i N_A$.
\begin{remark} Proposition \ref{FPProp}
is also true for the more general situation of the fibre 
being a general co-adjoint orbit $G/H$ (see Proposition 2.1 and Remark
2.3 in \cite{FP}).
\end{remark} 

\subsection{Future extensions}

It would be interesting to extend the results stated in Section \ref{sec:Intro} to more general
K\"ahler fibrations. Indeed, the adiabatic limit technique used in the proof of
our existence theorem is not limited to projectivised bundles, and could be applied to more general fibrations.

Suppose we are given a principal $G$-bundle $\pi: P \to (M,\omega_M)$, with
connection $\nabla$, over a cscK manifold $(M,\omega_M)$ without
holomorphic automorphisms.
We suppose the fibres of the associated bundle $X \to M$ 
to be of the form $(G/H,\omega_{G/H})$,
while the K\"ahler metric $\omega_{G/H}$ is supposed to be 
K\"ahler-Einstein.
Moreover, we assume the existence of a moment map $\mu: G/H \to \mathfrak{g}^*$,
embedding $G/H$ as an integral co-adjoint orbit. (For a detailed discussion
of the theory of (co-) adjoint orbits and existence of K\"ahler-Einstein metrics
on them, see \cite{Bes}.)

In addition to the existence of the moment map $\mu: G/H \to \mathfrak{g}^*$,
we stipulate that
the symplectic form $\omega_{G/H}$ is the curvature form of a (Chern)
connection on a Hermitian holomorphic line bundle $L \to G/H$, such that the action of
$G$ on $G/H$ lifts to a unitary action on $L$ preserving the connection.
Let $\mathcal{L}=P \times_G (G/H,L) \to X$ be the Hermitian holomorphic line bundle,
whose fibrewise restriction
is $L \to G/H$. The connection $\nabla$ enables us to combine the fibrewise
connections in $\mathcal{L}$ to give a (Chern) connection $\nabla^{\mathcal{L}}$.
Using the horizontal-vertical decomposition defined by $\nabla$, 
we obtain for the curvature of $\nabla^{\mathcal{L}}$ (cf. \cite[Remark~2.3]{FP})
\[ i F^{\nabla^{\mathcal{L}}}=\omega_{G/H} \oplus 0 \oplus \mu^* (F^{\nabla}),\]
in which $F^{\nabla}$ is the curvature form of $\nabla$, and $\mu^* (F^{\nabla})$
is defined similarly as before in Proposition~\ref{FPProp}.
Using the theory of stability and Hermitian-Einstein connections on principal
bundles of Ramanathan and Subramanian \cite{RS}, it should be possible to
formulate a criterion similar to the decomposition of the vector bundle $E
\to M$ into stable direct summands used before, for the principal fibre bundle
$P \to M$. It should
be possible to extend the main existence result for extremal metrics
on projectivised bundles, Theorem \ref{Thm:MainResult}, to this more general situation using again an
adiabatic limit technique. At the time of writing this paper the author was not able to
work everything out in detail, but these question shall be addressed in a sequel to the current paper.

\section{The formal solutions}\label{ChapApproxSol}

We are now going to construct a pointwise formal power series solution
of the extremal metric equation \eqref{eq:ExtrCond1} by adding K\"ahler potentials, found by an inductive scheme, to the metric $\omega_k$. 
Our induction scheme will be different from
the ones of Fine \cite{F} and Hong \cite{Ho1,Ho2}, since a non-trivial co-kernel
will be present in some of the linear equations we have to solve. 

However, since our induction scheme is similar in nature to the one in \cite[Section~3]{F},
we will loosely follow the structure of the exposition there.
All results obtained for the formal solutions in this section are only valid \emph{pointwise}.
Only later we will show how to establish convergence of the formal power series solutions in suitable Banach spaces.

In summary, the purpose of Section \ref{ChapApproxSol} is to produce a K\"ahler metric $\omega_{k,n}, \ n \geq 1$ with $\omega_{k,0}=\omega_k$ on $\PE \to (M,\omega_M)$, obtained by adding K\"ahler potentials  $\psi_i$ to $\omega_k$---with $\psi_i \in C^{\infty}_{\bT^s} (\PE, \bR)$, where $C^{\infty}_{\bT^s} (\PE, \bR)$ denotes the space of smooth real valued functions on $\PE$ invariant under the $\bT^s$-action induced on $\PE$ by $Id_{E_1}, \dots, Id_{E_s} \in \End (E)$ (cf. Definition \ref{def:InfAction})---such that $\omega_{k,n}$ is an \emph{approximate solution} to the extremal metric equation \eqref{eq:ExtrCond1} \emph{in the sense that} for certain constants $\oC, c_1, \dots, c_{n+1} \in \bR$,
\begin{equation}\label{eq:ExtrEqApproxDef}
Scal( \omega_{k,n} ) - Q (\omega_{k,n}) - \oC = \sum_{i=1}^{n+1} c_i k^{-i} + \mathcal{O}(k^{-n-2}),
\end{equation}
where $Q(\omega_{k,n})$ is a Hamiltonian with respect to the purely vertical part $(\omega_{k,n})_{\VerT}$ of $\omega_{k,n}$ for a (Hamiltonian Killing) vector field in the Lie algebra $\mathfrak{t}^s$ of the torus $\bT^s$ (generated by the vector fields induced by $Id_{E_1}, \dots, Id_{E_s} \in \End (E)$ on $\PE$).

In order to produce this approximate solution $\omega_{k,n}$, we have to solve three linear PDEs at each step in our induction scheme. As explained in Subsection \ref{subsubsec:Splitting} below, the errors we have to correct in order to successively adjust a given approximate solution to a higher order approximate solution live in three function spaces $\Enu, \ R, \ C^{\infty}(M)$. The $\Enu$-parts of the errors are corrected by perturbing the hermitian bundle metric $h$ on $E \to M$ \emph{and} the (Hamiltonian Killing) vector field which corresponds, with respect to the purely vertical part $(\omega_{k,n})_{\VerT}$ of $\omega_{k,n}$, to the Hamiltonian $Q(\omega_{k,n})$ at each step (for the details, see Subsection \ref{subsec:Epart}).
The $R$-parts of the errors are corrected by adjusting $\omega_{k,n}$ at a certain step in the induction scheme by a $\bT^s$-invariant K\"ahler potential which is $L^2$-orthogonal to the function space $\Enu$ (for the details, see Subsection \ref{subsec:Rpart}). Finally, the $C^{\infty}(M)$-parts of the errors are corrected by adjusting the K\"ahler form $\omega_M$ on the base manifold $M$ by suitable K\"ahler potentials (for the details, see Subsection \ref{subsec:Cpart}).

\subsection{The first order approximate solution}\label{sec:1stOrdApprox}

We shall now compute the scalar curvature of $\omega_{k,0}=\omega_k = 
i \Lcurv + k \cdot \pi^* \omega_M$. 
But first, we will need some more terminology.

\

Splitting the trace $\Ftrace$ with respect to $\omega_k$ up into vertical
and horizontal parts motivates the following definitions.

\begin{definition}\label{def:traces}
The \emph{vertical trace} is defined by
\[ \Vtrace \alpha=\frac{(r-1) \alpha \wedge \omega^{r-2}_{FS}}{\omega^{r-1}_{FS}},\]
for $\alpha \in \Lambda^2 \VerT^*$, where the quotient is taken in the line $\det \mathcal{V}^*$ (as $\omega_{FS} \in \Lambda^2 \VerT^*$ and $\rk (\mathcal{V})=r-1$, $r=\rk (E)$,
this is well-defined). The \emph{horizontal trace} is defined by
\[ \Htrace \alpha= \frac{n \alpha \wedge \omega_M^{n-1}}{\omega_M^n},\]
for $\alpha \in \Lambda^2 \HorT^*$, where the quotient is taken in the line $\det \mathcal{H}^*$  (as $\omega_M \in \Lambda^2 \HorT^*$ and $\rk (\mathcal{H})=\dim (M)=n$,
this is also well-defined).
\end{definition}
\begin{lemma}\label{lem:VHtrace}
Let $\alpha \in \Lambda^2 T \PE^*$, then
\[\Ftrace \alpha= \Vtrace \brak{\alpha}_{\VerT} + k^{-1}
\Htrace \brak{\alpha}_{\HorT} + \mathcal{O}(k^{-2}),\]
where $(\alpha)_{\HorT}$ and $(\alpha)_{\VerT}$ denote the purely
horizontal and purely vertical components of the form~$\alpha$.
\end{lemma}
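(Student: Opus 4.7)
The plan is to exploit the block structure of $\omega_k$ under the vertical-horizontal splitting of $\Lambda^2 T\PE^*$, and deduce a corresponding block structure for the inverse K\"ahler metric $g_k^{-1}$. First I would observe, using Proposition~\ref{FPProp} and the fact that $\pi^*\omega_M$ is purely horizontal (it annihilates vertical vectors since $d\pi$ does), that
\[
\omega_k = i \Lcurv + k \pi^* \omega_M = \omega_{FS} \oplus 0 \oplus \brak{\mu^*(F^{\nabla}) + k \pi^* \omega_M}
\]
carries \emph{no} mixed component. Combined with the fact that the almost complex structure $J$ on $\PE$ preserves the splitting $T\PE = \VerT \oplus \HorT$---the vertical subbundle is the kernel of the holomorphic submersion $d\pi$, and the horizontal subbundle of the Chern connection on a Hermitian holomorphic bundle is automatically $J$-invariant---this forces the K\"ahler metric $g_k = \omega_k(J\cdot,\cdot)$ to split as a direct sum
\[
g_k = g_{\VerT} \oplus g_{\HorT}^{(k)},
\]
where $g_{\VerT}$ is the Fubini-Study fibre metric associated with $\omega_{FS}$, and $g_{\HorT}^{(k)} = k\,\pi^* g_M + \tilde g$ for some fixed, $k$-independent horizontal symmetric tensor $\tilde g$.

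Next, $\Ftrace \alpha$ is simply the metric contraction $g_k^{ij}\alpha_{ij}$, and because $g_k^{-1}$ is block diagonal, the purely mixed part of $\alpha$ contributes zero; hence $\Ftrace\alpha$ splits into a vertical and a horizontal piece. The vertical piece is $\Vtrace \brak{\alpha}_{\VerT}$ by the very definition of $\Vtrace$ and is $k$-independent. For the horizontal piece I would factor $g_{\HorT}^{(k)} = k\,\pi^* g_M \brak{I + k^{-1}(\pi^* g_M)^{-1} \tilde g}$ and apply a Neumann expansion to obtain
\[
\brak{g_{\HorT}^{(k)}}^{-1} = k^{-1} (\pi^* g_M)^{-1} - k^{-2}(\pi^*g_M)^{-1}\tilde g(\pi^*g_M)^{-1} + \mathcal{O}(k^{-3});
\]
contracting with $\brak{\alpha}_{\HorT}$ gives $k^{-1} \Htrace \brak{\alpha}_{\HorT} + \mathcal{O}(k^{-2})$, and adding the vertical contribution yields the lemma.

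The only mildly delicate point is confirming that the remainder is genuinely $\mathcal{O}(k^{-2})$ rather than $\mathcal{O}(k^{-1})$, but this is immediate from the explicit form of the first correction in the Neumann series. As a sanity check, an alternative fully explicit derivation is available: use $\Ftrace\alpha = N\,\alpha\wedge\omega_k^{N-1}/\omega_k^N$ with $N = n+r-1 = \dim_{\bC}\PE$, expand both $\omega_k^{N-1}$ and $\omega_k^N$ binomially in $\omega_{FS}$ and in the horizontal part $\mu^*(F^{\nabla}) + k\pi^*\omega_M$, and use that pure-vertical (respectively pure-horizontal) forms of degree exceeding $2(r-1)$ (resp.\ $2n$) vanish; this pins down exactly which binomial terms survive and reproduces the same two leading contributions together with the same order estimate.
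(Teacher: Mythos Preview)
Your proof is correct. Your primary route---exploiting the block-diagonal structure of $g_k$ with respect to $\VerT\oplus\HorT$, interpreting $\Ftrace$ as a metric pairing, and running a Neumann expansion on the horizontal block---is a genuine alternative to what the paper does. The paper instead works entirely on the form side: it plugs $\omega_k=\omega_{FS}+(\mu^*(F^{\nabla})+k\omega_M)$ into the wedge-product definition $\Ftrace\alpha=(n+r-1)\,\alpha\wedge\omega_k^{n+r-2}/\omega_k^{n+r-1}$, expands binomially, and uses the vanishing of vertical (resp.\ horizontal) forms of degree above $2(r-1)$ (resp.\ $2n$) to isolate exactly two surviving terms, then expands the horizontal fraction in $k^{-1}$. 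This is precisely what you sketch as your ``sanity check.''

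The trade-offs: the paper's computation is self-contained from the wedge-product definitions in Definition~\ref{def:traces} and needs no auxiliary facts about $J$. Your approach is conceptually clean but front-loads two ingredients---$J$-invariance of the Chern-connection horizontal distribution, and the identification of $\Lambda_\omega$ with a metric pairing (your phrase ``$g_k^{ij}\alpha_{ij}$'' should be read as $\langle\alpha,\omega_k\rangle_{g_k}$ rather than the literal real-index contraction, which would vanish on $\omega_k$ itself). Both are standard, so this is a matter of taste rather than a gap.
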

\begin{proof}
The result is obtained by computing:
\begin{align*}
\Ftrace \alpha = & \frac{(n+r-1) \alpha \wedge \omega_k^{n+r-2}}{\omega_k^{n+r-1}}\\
 = & \frac{(r-1) \brak{\alpha}_{\VerT} \wedge \omega_{FS}^{r-2}\wedge \brak{\Hform
+k \omega_M}^n }{\omega_{FS}^{r-1}\wedge \brak{\Hform +k \omega_M}^n}\\
& + \frac{n \brak{\alpha}_{\HorT} \wedge \omega_{FS}^{r-1}\wedge \brak{\Hform
+k \omega_M}^{n-1} }{\omega_{FS}^{r-1}\wedge \brak{\Hform +k \omega_M}^n}\\
 = & \Vtrace \brak{\alpha}_{\VerT} + k^{-1} \Htrace \brak{\alpha}_{\HorT}
+ \mathcal{O}(k^{-2});
\end{align*}
where in the last equality we expanded the second fraction in a power series in terms of $k^{-1}$,
and absorbed the terms containing $\Hform$ into the $\mathcal{O}(k^{-2})$-terms.
\end{proof}
\begin{definition}\label{def:vhLapl2D}
The vertical and horizontal Laplacians (on functions) are defined by
\[ \Vlap f = \Lambda_{\omega_{FS}} \brak{\idd f}_{\VerT}, \]
and
\[ \Hlap f = \Lambda_{\omega_M} \brak{\idd f}_{\HorT}. \]
The fibrewise restriction of $\Vlap$ is the Laplacian on a fibre
determined by $\omega_{FS}$. Whereas on functions pulled back from the base,
$\Hlap$ is the Laplacian defined by $\omega_M$.
\end{definition} 
\begin{lemma}\label{lem:LaplSplit}
The $\omega_k$-Laplacian on functions, denoted by $\Flap$, satisfies
\[\Flap f= \Vlap f + k^{-1} \Hlap f + \mathcal{O}(k^{-2}).\]
\end{lemma}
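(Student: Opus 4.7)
The plan is to observe that this lemma is essentially a direct corollary of the previous Lemma \ref{lem:VHtrace}, once one recognises that the Kähler Laplacian on functions is the trace of the Hessian two-form. Concretely, under the convention consistent with Definition \ref{def:vhLapl2D}, we have
\[
\Flap f \;=\; \Ftrace \brak{\idd f},
\]
so the strategy is simply to set $\alpha := \idd f \in \Lambda^2 T\PE^*$ and feed this into Lemma \ref{lem:VHtrace}.

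First I would verify that the three pieces of the vertical-horizontal decomposition behave as expected: the purely vertical part $(\idd f)_{\VerT}$ is the fibrewise $\idd$-operator applied to $f$, and upon taking $\Vtrace$ it reproduces $\Vlap f$ by Definition \ref{def:vhLapl2D}. Similarly, $(\idd f)_{\HorT}$ is the purely horizontal component and $\Htrace$ returns $\Hlap f$. The mixed component $(\idd f) \in \VerT^* \otimes \HorT^*$ does not appear in the conclusion of Lemma \ref{lem:VHtrace}, so we do not need to track it (it is absorbed into the error, though in fact it contributes trivially to the traces considered).

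Putting these identifications together with Lemma \ref{lem:VHtrace} yields
\[
\Flap f \;=\; \Ftrace \brak{\idd f} \;=\; \Vtrace \brak{\idd f}_{\VerT} + k^{-1} \Htrace \brak{\idd f}_{\HorT} + \mathcal{O}(k^{-2}) \;=\; \Vlap f + k^{-1} \Hlap f + \mathcal{O}(k^{-2}),
\]
which is the claim. There is no real obstacle here, beyond checking that the conventions in Definition \ref{def:vhLapl2D} are compatible with the trace expansion of Lemma \ref{lem:VHtrace}; the only mild subtlety is to confirm that horizontal/vertical decomposition commutes with taking $\idd$ in the sense required, but this is precisely the content of the fibrewise interpretation of $\Vlap$ and $\Hlap$ spelled out in Definition \ref{def:vhLapl2D}.
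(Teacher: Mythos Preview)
Your proposal is correct and follows exactly the same approach as the paper, which simply notes that the result follows immediately from the decomposition of $\Ftrace$ in Lemma \ref{lem:VHtrace} applied to $\alpha = \idd f$. Your write-up merely unpacks in more detail what the paper states in one sentence.
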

\begin{proof}
This follows immediately from the decomposition of $\Ftrace$ obtained
in Lemma \ref{lem:VHtrace}.
\end{proof}
\begin{lemma}\label{1storder}
For the first order approximate 
solution $\omega_k$ we get
\begin{equation}\label{1stLin} 
Scal(\omega_k)=\oC+k^{-1} \left(
Scal(\omega_M) + b \mu^* (\Lambda_{\omega_M} F^{\nabla})  \right) + \mathcal{O}(k^{-2}), 
\end{equation}
for some constants $\oC,b$ depending only on $r$; and $\mu^*$ is again the map defined at the beginning of Section \ref{subsec:Prep}.
\end{lemma}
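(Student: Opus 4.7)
The plan is to apply the K\"ahler identity $Scal(\omega_k)=-\Lambda_{\omega_k}Ric(\omega_k)$ and expand both the Ricci form and the trace in powers of $k^{-1}$. First, using Proposition~\ref{FPProp}, the fact that $\omega_{FS}$ is purely vertical of fibre dimension $r-1$ while $\mu^*(F^{\nabla})+k\pi^*\omega_M$ is purely horizontal of base dimension $n$ forces the top power to collapse:
\[
\omega_k^{n+r-1}=\binom{n+r-1}{r-1}\omega_{FS}^{r-1}\wedge\bigl(k\pi^*\omega_M+\mu^*(F^{\nabla})\bigr)^n.
\]
Factoring out $k^{n}\omega_{FS}^{r-1}\wedge\pi^*\omega_M^n$ and using that the first-order correction in the binomial expansion is exactly $n\,\mu^*(F^{\nabla})\wedge\pi^*\omega_M^{n-1}/\pi^*\omega_M^n=\mu^*(\Lambda_{\omega_M}F^{\nabla})$ yields
\[
\omega_k^{n+r-1}=k^{n}C_{n,r}\,\omega_{FS}^{r-1}\wedge\pi^*\omega_M^n\cdot\bigl(1+k^{-1}\mu^*(\Lambda_{\omega_M}F^{\nabla})+\mathcal{O}(k^{-2})\bigr).
\]

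Taking $-i\partial\bar\partial\log$ of this identity against any local holomorphic volume form gives a Ricci decomposition
\[
Ric(\omega_k)=Ric_{\mathrm{ref}}-k^{-1}i\partial\bar\partial\,\mu^*(\Lambda_{\omega_M}F^{\nabla})+\mathcal{O}(k^{-2}),
\]
where $Ric_{\mathrm{ref}}$ denotes the Ricci form of the reference volume $\omega_{FS}^{r-1}\wedge\pi^*\omega_M^n$. The purely vertical part of $Ric_{\mathrm{ref}}$ is the fibrewise Fubini-Study Ricci form on $\CP^{r-1}$, while its purely horizontal part is $\pi^*Ric(\omega_M)$ plus coupling terms that are themselves $\mathcal{O}(k^{-1})$ corrections once traced against $\omega_k$.

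Finally, applying $-\Lambda_{\omega_k}$ using Lemma~\ref{lem:VHtrace} and Lemma~\ref{lem:LaplSplit} extracts the scalar curvature. At leading order, $-\Lambda_{\omega_{FS}}$ of the fibrewise Fubini-Study Ricci form gives the constant Fubini-Study scalar curvature $\bar{C}$, depending only on $r$. At order $k^{-1}$, the horizontal trace $\Lambda_{\omega_M}$ applied to $\pi^*Ric(\omega_M)$ produces $Scal(\omega_M)$, while the remaining $k^{-1}$ contributions---namely $\Lambda_{\omega_{FS}}(i\partial\bar\partial\mu^*(\Lambda_{\omega_M}F^{\nabla}))$ together with the horizontal cross terms of $Ric_{\mathrm{ref}}$---combine into a constant multiple $b\,\mu^*(\Lambda_{\omega_M}F^{\nabla})$ of the same moment-map function. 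This works because, for every $v\in\mathfrak{su}(r)$, the function $\mu^*(v)$ is a linear Hamiltonian on $\CP^{r-1}$ and therefore a Fubini-Study Laplacian eigenfunction with eigenvalue depending only on $r$. The main obstacle is the careful bookkeeping at this last step: one must verify that every horizontal-vertical cross term in $Ric_{\mathrm{ref}}$ either cancels, reappears as a constant times $\mu^*(\Lambda_{\omega_M}F^{\nabla})$, or is already absorbed into $\mathcal{O}(k^{-2})$, so that no other structure survives at this order.
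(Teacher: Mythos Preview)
Your approach is essentially the same as the paper's: compute the Ricci form of $\omega_k$ by expanding $\log$ of the volume form, then trace using the vertical/horizontal decomposition of $\Lambda_{\omega_k}$ (Lemmas~\ref{lem:VHtrace} and~\ref{lem:LaplSplit}), and finally invoke that $\mu^*(\Lambda_{\omega_M}F^{\nabla})$ is a fibrewise Fubini--Study eigenfunction. The difference lies only in how the ``reference Ricci'' is identified, and this is precisely where your self-described ``careful bookkeeping'' obstacle dissolves in the paper's treatment.

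Rather than working with an abstract $Ric_{\mathrm{ref}}$ and worrying about unspecified horizontal--vertical cross terms, the paper uses the holomorphic line bundle isomorphism $K_{\PE}\cong\Lambda^{r-1}\VerT^*\otimes\Lambda^n\HorT^*$ together with $\Lambda^{r-1}\VerT^*\cong\mathcal{O}_{\PE}(-r)\otimes(\det E)^{-1}$. This gives the reference curvature in closed form as $r\,i\Lcurv+\rho_M$, and Proposition~\ref{FPProp} then tells you exactly what $i\Lcurv$ looks like: $\omega_{FS}\oplus 0\oplus\mu^*(F^{\nabla})$. So there are no mysterious cross terms to chase---the only horizontal contribution beyond $\rho_M$ is precisely $r\,\mu^*(F^{\nabla})$, which traces to $k^{-1}r\,\mu^*(\Lambda_{\omega_M}F^{\nabla})$. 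Combined with the vertical Laplacian of the $k^{-1}$ correction (eigenvalue $2r$), you get $b=r+2r=3r$ immediately. Your outline would arrive at the same place, but the line-bundle identification replaces the bookkeeping you flag as the main obstacle with a one-line application of Proposition~\ref{FPProp}.
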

\begin{proof} We have the short exact sequence of vector bundles on $\PE$
\[0 \to \VerT \to T\mathbb{P}(E) \to \HorT \to 0. \]
Therefore, we have the $C^{\infty}$-splitting $T \PE = \VerT \oplus \HorT$ (as already mentioned
in Section \ref{subsec:Prep} above). This is not a \emph{holomorphic} splitting and in general $\HorT$ defined via this splitting won't be a holomorphic subbundle of $T \PE$. However, as the vertical tangent bundle $\VerT$ is a holomorphic subbundle
of $T \PE$, the quotient bundle $T \PE / \VerT$ is also a holomorphic  vector bundle. Moreover,
we have the $C^{\infty}$-isomorphism $\HorT \cong T \PE / \VerT$, and for the calculation below we shall use this identification and consider $\HorT$ as a holomorphic vector bundle.

Thus we have the isomorphism $K_{\mathbb{P}(E)} \cong \Lambda^{r-1} \VerT^* \otimes
\Lambda^n \HorT^*$ of holomorphic line-bundles. Hence the Ricci form
\[\rho_k = i F^{\Lambda^{r-1} \VerT^*} + i F^{\Lambda ^n \HorT^*}, \]
where $F^{\Lambda^{r-1} \VerT^*},F^{\Lambda ^n \HorT^*}$ are the curvature
forms of $\Lambda^{r-1} \VerT^*, \Lambda ^n \HorT^*$.

With $\Lambda^{r-1} (\VerT^*) \cong \mathcal{O}_{\PE}(-r) \otimes (\det E)^{-1}$,
we see that $\omega_k= \omega_{FS} \oplus \Hform + k \omega_M$ induces a metric $h_{\VerT}$ on $\Lambda^{r-1} \VerT^*$
which is determined by the fibrewise Fubini-Study metrics. So, $h_{\VerT}$
is the $r$-th power of the metric on $\fhb$ (which is induced by the metric $h$ on $E$),
hence its curvature is just $r \Lcurv$.

The curvature $F^{\Lambda ^n \HorT^*}$ of $\Lambda^n \HorT^*$ depends on $k$,
as the metric on $\HorT$ corresponds to the K\"ahler-form $\Hform +k \omega_M$. 
Denote by
$\rho_M$ the Ricci form (pulled back\footnote[1]{We won't denote the pullback of functions, forms, etc. explicitly.}
to $\PE$), i.e. the curvature form of the Chern connection
on $K^*_M$, the anti-canonical line bundle of $M$,
determined by $\omega_M$. Since the horizontal tangent bundle $\HorT$ projects to the tangent bundle $TM$ of the base manifold $M$, we will identify $\Lambda^n \HorT^* \cong \pi^* K^*_M$
as holomorphic line-bundles.

The ratio of the top exterior powers of the two K\"ahler forms $\Hform +k \omega_M$ and $\omega_M$
gives us the ratio of the corresponding metrics on the (holomorphic) line bundle $\Lambda^n \HorT^*$.
By general theory, we then know that $iF^{\Lambda ^n \HorT^*}$ and $\rho_M$ are related by
\begin{align*}
iF^{\Lambda ^n \HorT^*}-\rho_M & =\idd \log \left( \frac{\brak{\Hform +k \omega_M}^n}{\omega_M^n} \right)\\
& = \idd \log \brak{k^n + \mu^* (\Lambda_{\omega_M} F^{\nabla}) k^{n-1}+ \mathcal{O}(k^{n-2})}.
\end{align*}
Thus, the Ricci form of $\omega_k$ is given by 
\begin{align*}
\rho_k & = i F^{\Lambda^{r-1} \VerT^*} + i F^{\Lambda ^n \HorT^*}\\ 
& = r i \Lcurv + i F^{\Lambda ^n \HorT^*}\\
& = r i \Lcurv + \rho_M + i \dbar \partial \log \brak{k^n + \mu^* (\Lambda_{\omega_M} F^{\nabla}) k^{n-1}+ \mathcal{O}(k^{n-2})}\\
& = r i \Lcurv + \rho_M + \underbrace{i \dbar \partial \log k^n}_{=0} +
i \dbar \partial \log \brak{1 + \mu^* (\Lambda_{\omega_M} F^{\nabla}) k^{-1}+ \mathcal{O}(k^{-2})}.
\end{align*}
Using the power series expansion $ \log (1+x) = \sum_{i=1}^{\infty} (-1)^{i+1}
\frac{x^i}{i}, \ |x|<1$ (which is possible since $k \gg 0$), we obtain
\begin{equation}\label{eq:Riccik}
\rho_k= r i \Lcurv + \rho_M + k^{-1} 
i \dbar \partial (\mu^* (\Lambda_{\omega_M} F^{\nabla}))+ \mathcal{O}(k^{-2}) 
\end{equation}
\[
= r \omega_{FS} + r \Hform+ \rho_M + k^{-1} 
i \dbar \partial (\mu^* (\Lambda_{\omega_M} F^{\nabla}))+ \mathcal{O}(k^{-2}) 
\]
Using Lemmas \ref{lem:VHtrace}, \ref{lem:LaplSplit},
and the fact that the Ricci-form of the Fubini-Study metric induced on the fibres by $\fhb$ is $\rho_{FS}=r \omega_{FS}$,
we get by taking the trace of $\rho_k$ with $\omega_k$
\[Scal(\omega_k)= Scal(\omega_{FS}) + k^{-1} \brak{r \mu^* (\Lambda_{\omega_M} F^{\nabla}) +Scal(\omega_M)+ \Vlap
(\mu^* (\Lambda_{\omega_M} F^{\nabla}))}+ \mathcal{O}(k^{-2}).\]
Moreover, using that $\mu^* (\Lambda_{\omega_M} F^{\nabla})$ is in the first eigenspace of $\Vlap$---with first eigenvalue $\nu_1=2 r$---we get
\[Scal(\omega_k)= Scal(\omega_{FS}) + k^{-1} \brak{Scal(\omega_M)+b \mu^* (\Lambda_{\omega_M} F^{\nabla})}+ \mathcal{O}(k^{-2}),\]
with some constant $b$ depending only on $r$.
Setting $\oC:=Scal(\omega_{FS})=2 r (r-1)$ gives us equation~\eqref{1stLin}.
\end{proof}

\subsubsection{Splitting of function spaces on $\PE$}\label{subsubsec:Splitting}

The space of smooth functions $C^{\infty}(\mathbb{P}(E))$
on $\mathbb{P}(E) \to M$ splits as follows
\[C^{\infty}(\mathbb{P}(E))= C^{\infty}_0 (\mathbb{P}(E))\oplus 
C^{\infty}(M),\] 
where $C^{\infty}(M)$ are the smooth functions pulled back from the base;
and the space $C^{\infty}_0(\mathbb{P}(E))$ of smooth functions of fibrewise mean-value zero
splits further into 
\[C^{\infty}_0(\mathbb{P}(E))=\Enu \oplus R,\]
where functions in $\Enu$ restrict to mean-value zero Hamiltonians for an isometry of a fibre with respect to the Fubini-Study metric,
while the functions in $R$ are $L^2$-orthogonal to $\Enu$ and the
constant functions. 
In total we get a splitting  into three function spaces
\begin{equation}\label{eq:FuncSpace Split}
C^{\infty}(\mathbb{P}(E))=\Enu \oplus R \oplus C^{\infty}(M),
\end{equation}
which depends on the Fubini-Study metric induced on the fibres of $\PE \to M$, and thus on the 
Hermitian bundle metric $h$ and the corresponding Chern connection $\nabla_h$ on $E \to M$. 

In order to perturb $\omega_k$ to a higher order approximation of an
extremal K\"ahler metric, we will have to deal with errors living in these
three function spaces. As already mentioned above, these errors will be corrected by solving linear PDEs.

\subsection{The second order approximate solution}\label{sec:2ndApprox}

\subsubsection{Linearisation formulas}\label{subsubsec:LinFormulas}

The next lemma is the same as \cite[Lemma~2.1]{F}, about the linearisation of the scalar
curvature map on K\"ahler potentials on a K\"ahler manifold $(M,J,g,\omega)$; similar formulas can also be found in \cite[Section~2]{LS}.
We are considering the map $Scal: \phi \mapsto Scal(\omega_{\phi})$, with
$\omega_{\phi}:=\omega+i \dbar \partial \phi$; which is defined on some
open set $U \subset C^{\infty} (M)$.
\begin{lemma}[cf. Lemma 2.1 in \cite{F}]\label{lem:LinScalMap}
On a K\"ahler manifold $(M,J,g,\omega)$,
let $V$ denote the $L^p_{m+4}$-Sobolev completion of $U \subset C^{\infty} (M)$. The scalar curvature map on K\"ahler potentials, $Scal$, extends to a smooth
map $Scal: V \to L^p_m$ whenever $(m+2)p-2n>0$, where $n=\dim_{\mathbb{C}}
M$ is the dimension of the underlying manifold $M$. Its linearisation at $0 \in V$
is given by
\begin{equation}\label{eq:LinScalMap}
L_{Scal, \omega}(\phi)= \left( \Delta^2- Scal(\omega_0) \Delta \right) \phi + n (n-1)
\frac{i \dbar \partial \phi \wedge \rho \wedge \omega^{n-2}}{\omega^n},
\end{equation}
where $\rho$ denotes the Ricci-form of $\omega$.
\end{lemma}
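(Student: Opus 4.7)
The proof splits cleanly into an analytic part (smoothness on Sobolev spaces) and an algebraic-geometric part (computing the first variation). I will outline both, but the real content is in the variation.

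\textbf{Smoothness on Sobolev spaces.} For $\phi \in C^\infty(M)$ small, $\omega_\phi$ is a Kähler form and, in local holomorphic coordinates,
\[
Scal(\omega_\phi) = -g_\phi^{j\bar k}\,\partial_j\partial_{\bar k}\log\det(g_\phi)_{p\bar q},
\]
which is a rational expression in the entries of $g + \partial\bar\partial\phi$ and its derivatives up to order four, with denominator a positive power of $\det(g + \partial\bar\partial\phi)$. The hypothesis $(m+2)p - 2n > 0$ gives the Sobolev embedding $L^p_{m+2} \hookrightarrow C^0$, so on a sufficiently small ball around $0 \in L^p_{m+4}$ the perturbed metric stays uniformly positive definite; then standard Sobolev multiplication and composition (for smooth functions of a bounded argument) promote the map $V \to L^p_m$ to a smooth map of Banach spaces. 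This piece is routine, so I would say only as much as above.

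\textbf{Computing the linearization.} The plan is to differentiate the identity
\[
Scal(\omega_t)\,\omega_t^n \;=\; n\,\rho_t\wedge\omega_t^{n-1},
\qquad \omega_t := \omega + t\,i\bar\partial\partial\phi,
\]
at $t=0$, and extract $L_{Scal,\omega}(\phi)$. Write $\alpha := i\bar\partial\partial\phi$, so $\dot\omega_0 = \alpha$. The right-hand side at $t=0$ produces two terms: $n\,\dot\rho_0\wedge\omega^{n-1}$ and $n(n-1)\,\rho\wedge\alpha\wedge\omega^{n-2}$. The left-hand side produces $L_{Scal,\omega}(\phi)\,\omega^n + Scal(\omega)\cdot n\,\alpha\wedge\omega^{n-1}$. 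Using the normalization $n\,\alpha\wedge\omega^{n-1}/\omega^n = \Lambda_\omega\alpha = \Delta\phi$, the $Scal(\omega)\cdot n\,\alpha\wedge\omega^{n-1}$ contribution becomes $Scal(\omega)\,\Delta\phi\cdot\omega^n$, which on moving to the other side accounts for the $-Scal(\omega)\Delta$ in the claimed formula.

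\textbf{The Ricci variation.} The key step is $\dot\rho_0 = -i\partial\bar\partial(\Delta\phi)$. This follows from $\rho_t = -i\partial\bar\partial\log(\omega_t^n/\omega^n) + \rho$ and $\tfrac{d}{dt}\big|_0\log(\omega_t^n/\omega^n) = \Lambda_\omega\alpha = \Delta\phi$, where the second identity is just the derivative of $\det$ at the identity. Inserting this into the $n\dot\rho_0\wedge\omega^{n-1}$ term and using again the trace normalization yields $\Delta^2\phi\cdot\omega^n$, accounting for the $\Delta^2$ contribution. The remaining piece $n(n-1)\,\rho\wedge\alpha\wedge\omega^{n-2}$, divided by $\omega^n$, is precisely the last term in \eqref{eq:LinScalMap}. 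Assembling these three contributions and rearranging gives the formula.

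\textbf{Main obstacle.} The analytic step is standard and the geometric step is a calculation; the only thing to watch carefully is the bookkeeping of signs (the conventions $\alpha = i\bar\partial\partial\phi = -i\partial\bar\partial\phi$, and the sign in $\rho = -i\partial\bar\partial\log\det$) and the placement of factors of $n$ and $n-1$ coming from wedging with powers of $\omega$. Getting all factors to line up so that the $\Delta^2$, $-Scal\cdot\Delta$, and Ricci-trace terms appear with the exact coefficients in \eqref{eq:LinScalMap} is the only real bit of care needed; everything else is either Sobolev calculus or the elementary algebraic identity $n(n-1)\beta\wedge\gamma\wedge\omega^{n-2}/\omega^n = (\Lambda_\omega\beta)(\Lambda_\omega\gamma) - \langle\beta,\gamma\rangle_\omega$ for $(1,1)$-forms, applied with $\beta = \alpha$, $\gamma = \rho$.
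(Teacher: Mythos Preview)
Your proposal is correct and follows the standard route: differentiate the identity $Scal(\omega_t)\,\omega_t^n = n\,\rho_t\wedge\omega_t^{n-1}$ and use the variation $\dot\rho_0 = i\dbar\partial(\Delta\phi)$. The paper itself does not give a proof of this lemma at all; it simply imports the statement from \cite[Lemma~2.1]{F} (and points to \cite[Section~2]{LS} for similar formulas), so there is nothing to compare against beyond noting that your argument is exactly the one found in those references.
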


Frequently, we will have to use another form of the linearisation of the scalar curvature map on K\"ahler potentials.
Using a Weitzenb\"ock-type formula for the Lichnerowicz-operator $\LichOp$,
equation \eqref{eq:LinScalMap} can also be (re-)written as in the following Lemma.
(Rigorous proofs of the two lemmas stated below, with a slightly different convention for the Laplacian and scalar curvature, can be found in \cite[Section~2]{LS}.)
\begin{lemma}\label{lem:LinScalNeu}
On a K\"ahler manifold $(M,J,g,\omega)$,
the linearisation $L_{Scal, \omega}$ of the scalar curvature map on K\"ahler potentials is given by
\begin{equation}\label{eq:LinScalNeu}
L_{Scal, \omega} ( \phi ) = \LichOp \phi +  \frac{1}{2} \nabla Scal \cdot \nabla \phi,
\end{equation}
where the gradient and inner product in the last summand are taken with respect to the metric $g$ corresponding to $\omega$.
\end{lemma}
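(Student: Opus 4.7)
The plan is to derive equation \eqref{eq:LinScalNeu} from the fourth-order formula \eqref{eq:LinScalMap} of Lemma \ref{lem:LinScalMap} by means of a Weitzenb\"ock-type identity for the Lichnerowicz operator $\LichOp$; no independent derivation of the linearisation from first principles is needed. The strategy is to convert the right-hand side of \eqref{eq:LinScalMap} into a form in which $\LichOp$ can be recognised, and then to check that what remains is exactly $\tfrac{1}{2}\,\nabla Scal \cdot \nabla\phi$.

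The first step is to recast the $(1,1)$-form wedge term in \eqref{eq:LinScalMap} as a pointwise curvature contraction. Using the standard algebraic identity on an $n$-dimensional K\"ahler manifold,
\[
n(n-1)\,\frac{\alpha \wedge \beta \wedge \omega^{n-2}}{\omega^n} = (\Lambda_\omega \alpha)(\Lambda_\omega \beta) - \langle \alpha, \beta\rangle,
\]
applied with $\alpha = i\dbar\del\phi$ and $\beta = \rho$, together with $\Lambda_\omega \rho = Scal(\omega)$ and $\Lambda_\omega(i\dbar\del\phi) = \Delta\phi$ (up to sign convention), this converts the wedge term into a combination of $Scal\cdot\Delta\phi$ and a Ricci--Hessian contraction of the form $R^{\bar j i}\phi_{i\bar j}$.

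The second step is to expand $\LichOp\phi = \dneu^*\dneu\phi$, where $\dneu\phi := \dbar(\mathrm{grad}^{1,0}\phi)$ is the operator whose kernel consists of holomorphy potentials. In local K\"ahler coordinates this reads, schematically, $g^{j\bar k}g^{i\bar\ell}\nabla_j \nabla_{\bar k}\nabla_{\bar\ell}\nabla_{\bar i}\phi$. Commuting one pair of covariant derivatives via the Ricci identity produces a contraction $R^{\bar j i}\phi_{i\bar j}$ of the same type as above, while rearranging a second pair and invoking the K\"ahler Bianchi identity $\nabla_i R_{j\bar k} = \nabla_j R_{i\bar k}$ collapses the surviving Ricci-derivative term into $\tfrac{1}{2}\nabla Scal$. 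The upshot is a pointwise identity that expresses $\LichOp\phi$ as the combination of $\Delta^2\phi$, a $Scal\cdot\Delta\phi$ piece, the Ricci--Hessian contraction appearing in step one, and a $\tfrac{1}{2}\langle \nabla Scal, \nabla\phi\rangle$ term.

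Finally, I would substitute both expressions into the right-hand side of \eqref{eq:LinScalMap} and verify that the $\Delta^2\phi$ terms agree, the $Scal\cdot\Delta\phi$ contributions cancel, and the Ricci--Hessian contractions cancel as well, leaving exactly $\LichOp\phi + \tfrac{1}{2}\nabla Scal\cdot\nabla\phi$ on the right. The main obstacle is purely bookkeeping: the Laplacian, the operator $\Lambda_\omega$, the inner product on $(1,1)$-forms, and the very definition of $\LichOp$ each carry conventions that differ in the literature by factors of $\pm 2$. The real work consists in making all of these consistent with the conventions implicit in Lemma \ref{lem:LinScalMap}; once that is done, Lemma \ref{lem:LinScalNeu} follows algebraically.
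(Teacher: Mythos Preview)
Your approach is exactly what the paper indicates: it does not give a self-contained proof but states that equation \eqref{eq:LinScalNeu} is obtained from \eqref{eq:LinScalMap} by a Weitzenb\"ock-type formula for $\LichOp$, referring to \cite[Section~2]{LS} for the details. Your outline of converting the wedge term to a Ricci--Hessian contraction and then matching it against the local expansion of $\LichOp$ is precisely this Weitzenb\"ock computation, so the proposal is correct and follows the same route.
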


In the same vein, we obtain the analogous result for the linearisation
of the extremal metric operator $Scal(\omega)-H(\omega)-\oS$.
\begin{lemma}\label{lem:LinExNeu}
On a K\"ahler manifold $(M,J,g,\omega)$,
the linearisation $L_{Extr, \omega}$ of the extremal metric operator $Scal(\omega)-H(\omega)-\oS$ on K\"ahler potentials \emph{invariant} under the chosen maximal connected compact subgroup $G_{max}$ of the reduced automorphism group $Aut^0_{red}(M,J)$, is given by
\begin{equation}\label{eq:LinExNeu}
L_{Extr, \omega}(\phi)=\LichOp (\phi)+ \frac{1}{2} \nabla Scal(\omega) \cdot
\nabla \phi- \frac{1}{2} \nabla H(\omega) \cdot \nabla \phi,
\end{equation}
where the gradients and inner products are taken with respect 
to the metric $g$ corresponding to $\omega$.
Here, $H(\omega)$ is the Hamiltonian with respect to $\omega$ of the extremal vector field determined by $G_{max}$ and $[\omega]$ (cf.  Definition \ref{def:EVF}).
\end{lemma}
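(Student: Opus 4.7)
The proof proceeds by splitting the operator $F(\omega) := Scal(\omega) - H(\omega) - \oS$ into its two nontrivial pieces and linearising each separately: $L_{Extr,\omega}(\phi) = L_{Scal,\omega}(\phi) - D_\omega H(\phi)$, where $D_\omega H$ denotes the Fréchet derivative at $\omega$ of the map $\tilde\omega \mapsto H(\tilde\omega)$. The scalar-curvature piece is supplied by Lemma~\ref{lem:LinScalNeu}, so the real content of Lemma~\ref{lem:LinExNeu} lies in identifying
\[
D_\omega H(\phi) \;=\; \tfrac{1}{2}\, \nabla H(\omega) \cdot \nabla \phi.
\]

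The essential input is the observation, recorded in Definition~\ref{def:EVF} and in the discussion preceding Remark~\ref{rem:EVFtorus}, that the extremal vector field $X := X^{G_{max}}_{[\omega]}$ depends \emph{only} on the K\"ahler class $[\omega]$ and on the choice of $G_{max}$, not on the specific representative in $[\omega]^{G_{max}}$. Since we restrict to $G_{max}$-invariant K\"ahler potentials, the perturbed form $\omega_\phi := \omega + \idd\phi$ remains $G_{max}$-invariant, so $X$ persists as a Hamiltonian Killing vector field for $\omega_\phi$ with (mean-value zero) Hamiltonian $H_\phi := H(\omega_\phi)$. Consequently, computing $D_\omega H(\phi)$ amounts to the classical question of how the Hamiltonian of a \emph{fixed} real holomorphic Killing vector field changes as the K\"ahler form moves by $\idd\phi$ within its cohomology class.

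Subtracting $\iota_X \omega = dH$ from $\iota_X \omega_\phi = dH_\phi$ gives $\iota_X(\idd\phi) = d(H_\phi - H)$. Using that $X$ is real holomorphic (so $\bar\partial X^{1,0} = 0 = \partial X^{0,1}$) together with $X\phi = 0$ (a consequence of the $G_{max}$-invariance of $\phi$), a short computation in local coordinates yields $\iota_X(\idd\phi) = \tfrac{1}{2}\, d(JX \cdot \phi)$. Hence $H_\phi - H = \tfrac{1}{2}\, JX \cdot \phi + c(\phi)$, with $c(\phi)$ an additive constant fixed by requiring $H_\phi$ to be mean-value zero against $\omega_\phi^n$. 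Invoking the standard K\"ahler identity $JX = \nabla H$ (a direct consequence of $\iota_X \omega = dH$ together with $\omega(X,\cdot) = g(JX,\cdot)$), the $\phi$-linear term rewrites as $\tfrac{1}{2}\nabla H(\omega) \cdot \nabla \phi$, giving $D_\omega H(\phi) = \tfrac{1}{2}\nabla H(\omega)\cdot\nabla\phi + c(\phi)$ after differentiating at $\phi = 0$.

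Assembling the two linearisations yields the claimed formula. The main obstacle in this plan is the identity $\iota_X(\idd\phi) = \tfrac{1}{2}\, d(JX\cdot\phi)$: while conceptually it just expresses the infinitesimal invariance $\mathcal{L}_X\omega_\phi = 0$ combined with the holomorphicity of $X$, verifying it requires careful tracking of bidegrees and sign conventions for the operator $\idd$. A secondary but ultimately harmless point is the additive constant $c(\phi)$; since the map $\phi \mapsto \omega_\phi$ is insensitive to constants, $L_{Extr,\omega}$ is naturally defined on potentials modulo $\mathbb{R}$, and in any case $c(\phi)$ does not affect the top-order $\LichOp$ behaviour on which the subsequent elliptic analysis in the later sections relies.
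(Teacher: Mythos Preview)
Your proposal is correct and follows essentially the same approach as the paper. The paper does not prove Lemma~\ref{lem:LinExNeu} directly (it defers to \cite[Section~2]{LS}), but the decomposition you use---Lemma~\ref{lem:LinScalNeu} for the scalar-curvature piece together with the variation of the Hamiltonian of a \emph{fixed} vector field under a change of K\"ahler form---is exactly how the paper assembles the analogous linearisation later (see equation~\eqref{eq:LAEMOLichOp}, which explicitly invokes Lemma~\ref{lem:LinScalNeu} and Lemma~\ref{lem:HamilVar}); your computation of $D_\omega H(\phi)$ is precisely the content of Lemma~\ref{lem:HamilVar}, modulo the sign convention $\iota_X\omega = dH$ versus the paper's $\iota_V\omega = -dF$.
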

Hence if we linearise the extremal metric operator $Scal(\omega)-H(\omega)-\oS$, \emph{at an extremal metric},
the last two summands in equation \eqref{eq:LinExNeu} drop out as the metric
already satisfies equation~\eqref{eq:ExtrCond1}, and
we get the Lichnerowicz-operator $\LichOp (\phi)$.

\subsubsection{Correcting the $\Enu$-part}\label{subsec:Epart}

The $\mathcal{O}(k^{-2})$-error in equation \eqref{1stLin}, which we
will denote by $\eta_{\mathcal{O}(k^{-2})}$, splits according to the splitting \eqref{eq:FuncSpace Split} of the
function space $C^{\infty}_0(\mathbb{P}(E))$,
\[ 
\eta_{\O (k^{-2})} = \eta_{\O (k^{-2}), \Enu} + \eta_{\O (k^{-2}), R} + \eta_{\O (k^{-2}), C^{\infty}(M)}.
\]
In order to get rid of the $\eta_{\O (k^{-2}), \Enu}$-part of the $\mathcal{O}(k^{-2})$-error $\eta_{\O (k^{-2})}$,
we will employ a technique which
involves perturbing the Hermitian metric $h$ on $E \to M$ by a suitable
Hermitian bundle endomorphism. In the current section, it becomes important that $\mu^* (\Lambda_{\omega_M} F^{\nabla})$ depends on the (Hermitian) bundle metric $h$. For this reason, we shall write $\mu^* (\Lambda_{\omega_M} F^{\nabla})=\mu^* (h,\Lambda_{\omega_M} F^{\nabla_h})$---emphasising on the $h$-dependence of the map $\mu^*$ and the Chern connection $\nabla=\nabla_h$ on $E \to M$---from now on.

\

Remember equation \eqref{1stLin} which says that the scalar curvature of $\omega_k$ is given by
\begin{align}
Scal(\omega_k)= & \oC+k^{-1} \left(
Scal(\omega_M) + b \mu^* (h,\Lambda_{\omega_M} F^{\nabla_h})  \right) \label{eq:EnuScalwithhDep} \\
& + k^{-2} \left( \eta_{\O (k^{-2}), \Enu} + \eta_{\O (k^{-2}), R} + \eta_{\O (k^{-2}), C^{\infty}(M)} \right) + \mathcal{O}(k^{-3}) \nonumber,
\end{align}
where we explicitly wrote out the $\mathcal{O}(k^{-2})$-error.

{\bf Step 1.} We are going to change $h$ to a new bundle metric $h':=h \brak{1+k^{-1} V}$, where $V$
is a Hermitian bundle endomorphism, i.e. the two metrics $h,h'$ are related via
\[ \left((1+k^{-1}V) (\cdot), \cdot\right)_h=(\cdot, \cdot)_{h'}.\]
This change of the metric $h$ will cause two types of changes in $\mu^* (h,\Lambda_{\omega_M} F^{\nabla_h})$. Namely, the one caused by the $h$-dependence of $\mu^*$ itself---indicated by the first argument of $\mu^* (\cdot, \cdot)$; and the other comes from varying $\Lambda_{\omega_M} F^{\nabla_h}$---the second argument of $\mu^* (\cdot, \cdot)$ in which it is actually linear. We write the total variation $\delta \mu^* (h,\Lambda_{\omega_M} F^{\nabla_h})$ as the sum of these two variations
\[
\delta \mu^* (h,\Lambda_{\omega_M} F^{\nabla_h})= \delta_h \mu^* (h,\Lambda_{\omega_M} F^{\nabla_h})
+ \delta_{\Lambda_{\omega_M} F^{\nabla_h}} \mu^* (h,\Lambda_{\omega_M} F^{\nabla_h}).
\]
In order to correct the $\eta_{\O (k^{-2}), \Enu}$-part of the $\mathcal{O} (k^{-2})$-error, we set
\begin{equation}\label{eq:EnuVCorr}
\delta_{\Lambda_{\omega_M} F^{\nabla_h}} \mu^* (h,\Lambda_{\omega_M} F^{\nabla_h}) =- \eta_{\O (k^{-2}), \Enu},
\end{equation}
which will give us an equation for $V$.

For the Hamiltonian of the (real holomorphic) Hamiltonian Killing vector field $\widehat{\Lambda_{\omega_M} F^{\nabla_h}}= \widehat{-i \sum_{p=1}^s \lambda_p Id_{E_p}}$ (defined as in Definition \ref{def:InfAction}) with respect to the metric $\omega_{FS}(h')$---which is the purely vertical part of $\omega_0 (h')$ with respect to the perturbed bundle metric $h'$---we will use the abbreviation $\mu^* (h',\Lambda_{\omega_M} F^{\nabla_h})=\mu^* (h,\Lambda_{\omega_M} F^{\nabla_h})+\delta_h \mu^* (h,\Lambda_{\omega_M} F^{\nabla_h})$.

{\bf Step 2.} Using the formula
\[h'^{-1}=\brak{1- k^{-1} V} h^{-1}+ \mathcal{O}(k^{-2}),\]
where $h^{-1}, h'^{-1}$ denote the (local) inverses of the metrics $h,h'$,
we are ready to compute the change $\delta_{\Lambda_{\omega_M} F^{\nabla_h}} \mu^* (h,\Lambda_{\omega_M} F^{\nabla_h})$ of $\mu^* (h,\Lambda_{\omega_M} F^{\nabla_h})$.
Since locally the curvature of the Chern connection $\nabla_h$ is given by $F^{\nabla_{h'}}= \dbar (h'^{-1} \partial h')$,
\begin{align*}
h'^{-1} \partial h' & = h^{-1} \partial h + k^{-1}
\brak{\partial V +\sbrak{h^{-1} \partial h,V}}+ \mathcal{O}(k^{-2})\\
& = h^{-1} \partial h + k^{-1} \partial_h V + \mathcal{O}(k^{-2}),\\
F^{\nabla_{h'}} & = F^{\nabla_{h}} + k^{-1} \dbar \partial_h V + \mathcal{O}(k^{-2}),
\end{align*}
where $\partial_h$ is the $(1,0)$-part of the Chern connection of the bundle metric $h$
(for the $(0,1)$-part we have $\dbar_h=\dbar$, thus we dropped the index).
Contracting, using the K\"ahler identity $\partial_h^* = i [ \Lambda, \dbar]$,
gives
\begin{equation}\label{eq:CurvChange}
\Htrace F^{\nabla_{h'}}= \Htrace F^{\nabla_{h}} - k^{-1} i \Delta_{\partial_h} V +\mathcal{O}(k^{-2}),
\end{equation}
where $\Delta_{\partial_h}$ denotes the $\partial_h^* \partial_h$-Laplacian
acting on endomorphisms
(determined by $h$).

Hence for $\mu^* (h',\Lambda_{\omega_M} F^{\nabla_{h'}})$ we get
\begin{align*}
\mu^* (h',\Lambda_{\omega_M} F^{\nabla_{h'}})= & \mu^* (h,\Lambda_{\omega_M} F^{\nabla_{h}})+\delta_h \mu^* (h,\Lambda_{\omega_M} F^{\nabla_h})- k^{-1} \mu^* (h,i \Delta_{\partial_h} V)+ \mathcal{O}(k^{-2}) \\
= & \mu^* (h',\Lambda_{\omega_M} F^{\nabla_h}) - k^{-1} \mu^* (h,i \Delta_{\partial_h} V)+ \mathcal{O}(k^{-2}).
\end{align*}
Therefore, after changing $h$ to $h'=h \brak{1+k^{-1} V}$, the scalar curvature of $\omega_k (h')$ is
\begin{align*}
Scal(\omega_k (h'))= & \oC+k^{-1} \left(
Scal(\omega_M) + b \mu^* (h',\Lambda_{\omega_M} F^{\nabla_h})  \right)\\
& + k^{-2} \left( -b \mu^* (h,i \Delta_{\partial_h} V) +\eta_{\O (k^{-2}), \Enu} + \eta_{\O (k^{-2}), R} + \eta_{\O (k^{-2}), C^{\infty}(M)} \right)
+ \mathcal{O}(k^{-3}). 
\end{align*}
Hence equation \eqref{eq:EnuVCorr} becomes
\[
b \mu^* (h,i \Delta_{\partial_h} V) = \eta_{\O (k^{-2}), \Enu}.
\]
Writing $\mu^* (h,U):= \eta_{\O (k^{-2}), \Enu}$ for some skew-hermitian endomorphism $U$, which is possible since $\eta_{\O (k^{-2}), \Enu} \in \Enu$---the space of mean-value zero Hamiltonians for isometries on the fibres of $\PE \to M$, gives
\begin{equation}\label{eq:EnuError}
bi \Delta_{\partial_h} V=U.
\end{equation}

{\bf Step 3.} In this last step, we solve equation \eqref{eq:EnuError}.

The Laplacian $\Delta_{\partial_h}$ has a non-trivial (co-)kernel in $\End (E)$.
Since the vector bundle we consider splits as a direct sum of stable subbundles
of different slopes $E=E_1 \oplus \dots \oplus E_s$, this (co-)kernel is generated 
by the identity endomorphisms $Id_{E_1}, \dots, Id_{E_s}$. 
Therefore, the projection of $U$ to $\coker_{\End (E)} \Delta_{\partial_h}$ can be written as
\[
\proj_{\coker_{\End (E)} \Delta_{\partial_h}} (U)=i (\gamma_1 Id_{E_1}+ \dots + \gamma_s Id_{E_s}),
\]
for suitably chosen $\gamma_1, \dots, \gamma_s \in \bR$. Subtracting $\proj_{\coker_{\End (E)} \Delta_{\partial_h}} (U)$ from the right hand side of equation \eqref{eq:EnuError}, we can now solve (using standard elliptic PDE-theory)
\begin{equation}\label{eq:EneuVProj}
bi \Delta_{\partial_h} V =U- \proj_{\coker_{\End (E)} \Delta_{\partial_h}} (U)=U- i (\gamma_1 Id_{E_1}+ \dots + \gamma_s Id_{E_s})
\end{equation}
for $V$. Thus, we have found the desired bundle endomorphism $V$ and can therefore correct the $\eta_{\O (k^{-2}), \Enu}$-error by setting $h'=h(1+k^{-1} V)$. 

However, subtracting $\proj_{\coker_{\End (E)} \Delta_{\partial_h}} (U)$ from the right hand side of equation \eqref{eq:EnuError}, we have to add it back on to the right hand side of equation \eqref{eq:EnuScalwithhDep}. In fact, with $U$ given by $\mu^* (h,U):= \eta_{\O (k^{-2}), \Enu}$, re-writing equation \eqref{eq:EnuScalwithhDep} as
\begin{align*}
Scal(\omega_k)= & \oC+k^{-1} \left(
Scal(\omega_M) + b \mu^* (h,\Lambda_{\omega_M} F^{\nabla_h})  \right) \\
& + k^{-2} \mu^* \left( h, \left( \proj_{\coker_{\End (E)} \Delta_{\partial_h}} (U) - \proj_{\coker_{\End (E)} \Delta_{\partial_h}} (U) \right) \right) \\
& + k^{-2} \left( \eta_{\O (k^{-2}), \Enu} + \eta_{\O (k^{-2}), R} + \eta_{\O (k^{-2}), C^{\infty}(M)} \right) + \mathcal{O}(k^{-3})
\end{align*}
leaves it unchanged (because the terms in the second line add to zero). Since $\proj_{\coker_{\End (E)} \Delta_{\partial_h}} (U)=i (\gamma_1 Id_{E_1}+ \dots + \gamma_s Id_{E_s})$, using that $\mu^*(\cdot, \cdot)$ is linear in its second argument, one can further re-write this as
\begin{align}
Scal(\omega_k)= & \oC+k^{-1} \left(
Scal(\omega_M) + \mu^* \left( h,b \Lambda_{\omega_M} F^{\nabla_h}+ k^{-1} i (\gamma_1 Id_{E_1}+ \dots + \gamma_s Id_{E_s}) \right)  \right) \label{eq:EnuScalReWrite} \\
& - k^{-2} \mu^* \left( h, \left( \proj_{\coker_{\End (E)} \Delta_{\partial_h}} (U) \right) \right) \nonumber \\
& + k^{-2} \left( \eta_{\O (k^{-2}), \Enu} + \eta_{\O (k^{-2}), R} + \eta_{\O (k^{-2}), C^{\infty}(M)} \right) + \mathcal{O}(k^{-3}). \nonumber
\end{align}
Therefore, using $b \Lambda_{\omega_M} F^{\nabla_h}= -bi \sum_{p=1}^s \lambda_p Id_{E_p}$, the ``trick'' we used to solve equation \eqref{eq:EnuError}---i.e. adding and subtracting $\proj_{\coker_{\End (E)} \Delta_{\partial_h}} (U)$---can be interpreted as changing the weights of the Hamiltonian $\bT^s$-action,
induced by $Id_{E_1}, \dots, Id_{E_s} \in \End (E)$ on $\PE$ (as in Definition \ref{def:InfAction}), since $ b \mu^* (h,\Lambda_{\omega_M} F^{\nabla_h})$ becomes
\[
\mu^* \left( h, i \sum_{p=1}^s (-b \lambda_p + k^{-1}  \gamma_p) Id_{E_p} \right).
\]
Using the re-written version of equation \eqref{eq:EnuScalwithhDep}, equation \eqref{eq:EnuScalReWrite}, we can go through the steps 1--3 explained above again via setting $h'=h(1+ k^{-1}V)$ and solving equation \eqref{eq:EneuVProj} for $V$, which gives us
\begin{align}
Scal(\omega_k (h'))= & \oC+k^{-1} \left(
Scal(\omega_M) + \mu^* \left( h', i \sum_{p=1}^s (-b \lambda_p + k^{-1}  \gamma_p) Id_{E_p} \right)  \right) \label{eq:EnuCorrScalReWrite} \\
& + k^{-2} \left( \eta_{\O (k^{-2}), R} + \eta_{\O (k^{-2}), C^{\infty}(M)} \right) + \mathcal{O}(k^{-3}). \nonumber
\end{align}

By Proposition \ref{FPProp}
and our definition of $\omega_k$,
\[\omega_k (h)= \omega_{FS}(h) + \mu^* (h, F^{\nabla_h}) + k \omega_M,\]
where we emphasised on the $h$-dependence of the first two summands.
These first two summands are representatives of the class $c_1(\fhb)$,
and therefore for any two metrics $h,h'=h \brak{1+k^{-1} V}$ 
on $E$ they are cohomologous. By general theory, the two metrics $\omega_k (h), \omega_k (h')$ are related by
\[
\omega_k (h') - \omega_k (h) = k^{-1} \idd \left( \sum_{d=0}^{\infty} k^{-d} \zeta_{d, \mathcal{O}(k^{-2}),\Enu} \right)=: k^{-1} \idd \phi_{\mathcal{O}(k^{-2}),\Enu},
\]
where it is crucial (in particular for the analysis done later in Section \ref{Chap6}) to observe that
\[
\omega_k (h') - \omega_k (h) = \O (k^{-1}).
\]
Therefore, the same effect as varying the metric $h$ on the bundle $E$
can also be achieved by adding 
$k^{-1} \idd \phi_{\mathcal{O}(k^{-2}),\Enu}$---where the K\"ahler potential $\phi_{\mathcal{O}(k^{-2}),\Enu}$ depends on powers of $k^{-1}$---to $\omega_k \ (=\omega_k (h))$. Clearly the K\"ahler potential $\phi_{\mathcal{O}(k^{-2}),\Enu} \in C^{\infty} (\PE, \bR)$ is $\bT^s$-invariant, i.e. $\phi_{\mathcal{O}(k^{-2}),\Enu} \in C^{\infty}_{\bT^s} (\PE, \bR)$, which follows directly from the fact that the metrics $\omega_k (h),\omega_k (h')$ and also their difference are $\bT^s$-invariant. 
Using $\omega_k (h') = \omega_k (h) + k^{-1} \idd \phi_{\mathcal{O}(k^{-2}),\Enu}$, we write as the conclusion of this section
\begin{align}
Scal(\omega_k +k^{-1} \idd \phi_{\mathcal{O}(k^{-2}),\Enu}) = & \oC+k^{-1} \left(
Scal(\omega_M) + \mu^* \left( h', i \sum_{p=1}^s (-b \lambda_p + k^{-1}  \gamma_p) Id_{E_p} \right)  \right) \nonumber \\
& + k^{-2} \left( \eta_{\O (k^{-2}), R} + \eta_{\O (k^{-2}), C^{\infty}(M)} \right) + \mathcal{O}(k^{-3}). \label{eq:EnuCorrScalConclus}
\end{align}

\subsubsection{Correcting the $R$-part}\label{subsec:Rpart}

Using the results in Section \ref{subsubsec:LinFormulas}, we get.
\begin{lemma}\label{1storderLin}
Denote again by $L_{Scal,\omega_k}$ the \emph{formal linearisation} of the scalar curvature map
on K\"ahler potentials defined by $\omega_k$. Then
\[ L_{Scal, \omega_k}= L_{Scal, F} + \mathcal{O}(k^{-1}), \]
where $L_{Scal, F}$ is the fibrewise linearisation of the scalar curvature map
(on K\"ahler potentials), i.e. $L_{Scal, F}(\phi)$ is defined as the change in scalar curvature determined by adding $\idd (\phi |_{fibre})$ on the Fubini-Study metrics induced on the fibres of $\PE \to M$.
\end{lemma}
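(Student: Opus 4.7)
The plan is to apply formula \eqref{eq:LinScalMap} from Lemma \ref{lem:LinScalMap} to $(\PE,\omega_k)$ (of complex dimension $N = n+r-1$) and expand the three resulting pieces in powers of $k^{-1}$. Each piece has already been prepared for such an expansion by earlier results in this section.

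For the first piece, squaring the expansion in Lemma \ref{lem:LaplSplit} gives $\Delta_k^2 \phi = \Vlap^2 \phi + \mathcal{O}(k^{-1})$. For the second piece, the same lemma combined with Lemma \ref{1storder} yields $Scal(\omega_k)\Delta_k \phi = \oC \, \Vlap \phi + \mathcal{O}(k^{-1})$, using crucially that the leading term $\oC$ is constant.

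For the Ricci-contraction piece $N(N-1)\,\tfrac{i\dbar\partial\phi \wedge \rho_k \wedge \omega_k^{N-2}}{\omega_k^N}$, I would substitute the expansion $\rho_k = r\omega_{FS} + r\Hform + \rho_M + \mathcal{O}(k^{-1})$ coming from equation \eqref{eq:Riccik}, and expand both numerator and denominator using the vertical/horizontal wedge-product bookkeeping from the proof of Lemma \ref{lem:VHtrace}. Only the purely vertical pairing---namely $r\omega_{FS}$ paired with $\omega_{FS}^{r-2}\wedge(k\omega_M)^n$ on top against $\omega_{FS}^{r-1}\wedge(k\omega_M)^n$ on the bottom---survives to leading order, while every combination involving $\Hform$ or $\rho_M$ is absorbed into $\mathcal{O}(k^{-1})$ by the geometric series expansion of $1/(k\omega_M+\Hform)^n$.

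Collecting the surviving $k^0$-terms reproduces exactly the expression obtained by applying formula \eqref{eq:LinScalMap} fibrewise to $(\CP^{r-1},\omega_{FS})$ evaluated on $\phi|_{\mathrm{fibre}}$, which by definition is $L_{Scal,F}(\phi)$. The main obstacle is the combinatorial accounting in the Ricci-contraction term, where the ``large'' summand $r\omega_{FS}$ of $\rho_k$ must be correctly paired with a power of $\omega_{FS}$ drawn from $\omega_k^{N-2}$ to reproduce the fibrewise Ricci contribution rather than being lost in the $\mathcal{O}(k^{-1})$ remainder; however this parallels the wedge-product bookkeeping already carried out in the proofs of Lemmas \ref{lem:VHtrace} and \ref{1storder}, so no essentially new analytic input is required.
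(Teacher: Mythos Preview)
Your proposal is correct and follows essentially the same route as the paper: apply formula \eqref{eq:LinScalMap} on $(\PE,\omega_k)$, expand each of the three pieces in $k^{-1}$ using Lemmas \ref{lem:LaplSplit}, \ref{1storder} and equation \eqref{eq:Riccik}, and carry out the same vertical/horizontal wedge-product bookkeeping as in Lemma \ref{lem:VHtrace} to isolate the fibrewise operator at order $k^0$. The paper's only cosmetic difference is that it keeps the Ricci-contraction piece in the form $(r-1)(r-2)\,(\idd\phi)_{\VerT}\wedge(\rho_k)_{\VerT}\wedge\omega_{FS}^{r-3}/\omega_{FS}^{r-1}$ rather than immediately substituting $(\rho_k)_{\VerT}=r\omega_{FS}+\mathcal{O}(k^{-1})$ as you do, but this is the same computation.
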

\begin{proof} For the linearisation $L_{Scal, \omega}$ of the scalar curvature map on K\"ahler
potentials on a K\"ahler manifold $(M,J,g,\omega)$, $Scal:\phi \in C^{\infty} \mapsto
Scal(\omega_{\phi})$, $\omega_{\phi}=\omega+ \idd \phi$, we 
have by equation \eqref{eq:LinScalMap}
\begin{equation}\label{linscalmap} 
L_{Scal, \omega}(\phi)= \left( \Delta^2 - Scal(\omega_0) \Delta
\right) \phi + n(n-1) \frac{i \dbar \partial \phi \wedge \rho \wedge \omega^{n-2}}{\omega^n},
\end{equation}
where $\rho$ is the Ricci-from of the K\"ahler metric induced by $\omega$.
Applying this to the scalar curvature map on K\"ahler potentials on $(\mathbb{P}(E),\omega_k)$
gives us
\[L_{Scal, \omega_k} (\phi) = \left( \Delta_k^2 - Scal(\omega_k) \Delta_k \right) \phi +
(n+r-1)(n+r-2) \frac{i \dbar \partial \phi \wedge \rho_k \wedge \omega_k^{(n+r
-3)}}{\omega_k^{(n+r-1)}},
\]
where as above, $\Delta_k$ is the Laplacian defined by $\omega_k$.
Using equation \eqref{1stLin}, and equation \eqref{eq:Riccik} for $\rho_k$
together with Lemma \ref{lem:LaplSplit}, gives
\begin{align*}
L_{Scal, \omega_k} (\phi)  = & \brak{\Vlap^2-Scal(\omega_{FS}) \Vlap}\phi \\
&+ (r-1)(r-2)\frac{\brak{i \dbar \partial \phi}_{\VerT} \wedge \brak{\rho_k}_{\VerT}
\wedge \omega_{FS}^{r-3} \wedge \brak{\Hform +k \omega_M}^n}{\omega_{FS}^{r-1}\wedge
\brak{\Hform +k \omega_M}^n} +\mathcal{O}(k^{-1}) \\
= & \brak{\Vlap^2-Scal(\omega_{FS}) \Vlap}\phi+(r-1)(r-2)\frac{\brak{i \dbar
\partial \phi}_{\VerT} \wedge \brak{\rho_k}_{\VerT}
\wedge \omega_{FS}^{r-3} }
{\omega_{FS}^{r-1}} +\mathcal{O}(k^{-1}) \\
= & L_{Scal, F} +\mathcal{O}(k^{-1}).
\end{align*}
(Essentially, this computation is the same as the one in the proof of Lemma \ref{lem:VHtrace}.)
\end{proof}

From equation \eqref{eq:LinScalNeu}, we know that
since the Fubini-Study metrics induced on the fibres of $\PE \to M$ have constant scalar curvature,
\[L_{Scal, F} (\phi)= \LichOp_F (\phi),\]
where $\LichOp_F$ is the Lichnerowicz operator on the fibres.

\begin{remark}\label{rem:LichOperator} \
\begin{enumerate}
\item
On a K\"ahler manifold $(M,J,g,\omega)$ endowed with a $G_{max}$-invariant K\"ahler metric---where again $G_{max}$ is some chosen maximal connected compact subgroup of $Aut^0_{red}(M,J)$---the Lichnerowicz operator $\LichOp$ is a self-adjoint, fourth order linear elliptic differential operator which is moreover $G_{max}$-invariant. Naturally, $\LichOp$ acts on the space of smooth, real-valued, $G_{max}$-invariant functions $C^{\infty}_{G_{max}}(M, \bR)$; and has a continuous linear extension---also denoted by $\LichOp$---mapping between the Sobolev-completions $L^2_{m, G_{max}}(M, \bR)$ of $C^{\infty}_{G_{max}}(M, \bR)$ in $L^2_{G_{max}}(M, \bR)$.
\item
Moreover on the K\"ahler manifold $(M,J,g,\omega)$, the space of Hamiltonian Killing vector fields $\mathfrak{ham}(M,J, \omega)=\mathfrak{iso}^0 (M,g) \cap \mathfrak{aut}^0_{red}(M,J)$---where $\mathfrak{iso}^0 (M,g)$ is the Lie algebra of the isometry group $Isom^0 (M,g)$ of $(M,g)$---can be identified via the Hamiltonian construction for $\omega$ with the (co-)kernel of $\LichOp$ in $C^{\infty}(M, \bR)$, since a vector field $V \in\mathfrak{ham}(M,J, \omega)$ if and only if it is of the form $V=J \grad_g f=\grad_{\omega} f$ for a real function $f \in \ker \LichOp$ (For a proof of this result, cf. \cite[Theorem~1~and~Proposition~1]{LS}).
\item
In particular, the Lichnerowicz operator $\LichOp$ on $\PE \to M$ is invariant under the (Hamiltonian) $\bT^s$-action induced on $\PE$ by the bundle-endomorphisms $Id_{E_1}, \dots, Id_{E_s}$---remember, the vector bundle $E \to M$ is supposed to split as a direct sum $E=E_1 \oplus \dots \oplus E_s$ of \emph{stable}, hence simple, sub-bundles of different slope---via the (infinitesimal) action described in Definition \ref{def:InfAction}. This is relevant, for example,
since we perturb the K\"ahler metric $\omega_k$ on $\PE \to M$ by adding $\bT^s$-invariant K\"ahler potentials $\phi \in C^{\infty}_{\bT^s} (\PE , \bR)$.
\end{enumerate}
\end{remark}

By point 2. of Remark \ref{rem:LichOperator} we know, since $\LichOp_F$ is self-adjoint, that $\ker \LichOp_F \cong \coker \LichOp_F$ can be identified via the Fubini-Study metric induced on the fibres of $\PE \to M$ with the function space $\Enu$ in the splitting \eqref{eq:FuncSpace Split} of $C^{\infty}(\PE)$.
Therefore, we can invert $L_{Scal, F} =\LichOp_F$ \emph{only} in the function space $R$---which consists of the functions which are $L^2$-orthogonal to $\Enu$ and the constant functions.

The $R$-component of $\eta_{\O(k^{-2})}$ will be corrected by adding a suitably chosen K\"ahler potential $k^{-2} \phi_{\mathcal{O}(k^{-2}), R}$ to $\omega_k$. Applying Lemma \ref{1storderLin} gives
\begin{equation}\label{1stRcorrect}
Scal(\omega_k + k^{-1} \idd \phi_{\mathcal{O}(k^{-2}),\Enu} +k^{-2} \idd \phi_{\mathcal{O}(k^{-2}), R})= 
\end{equation}
\[
\oC+k^{-1} \left( Scal(\omega_M)
 + \mu^* \left( h', i \sum_{p=1}^s (-b \lambda_p + k^{-1}  \gamma_p) Id_{E_p} \right)  \right) 
\]
\[
+k^{-2}
\left( L_{Scal, F} (\phi_{\mathcal{O}(k^{-2}), R}) +  \eta_{\O (k^{-2}), R} + \eta_{\O (k^{-2}), C^{\infty}(M)} \right)+ \mathcal{O}(k^{-3}).
\]
Therefore, the $\eta_R$-part of the $\O (k^{-2})$-error can be corrected by solving
\begin{equation}\label{eq:FibreScalCorrkminus2}
L_{Scal, F} (\phi_{\mathcal{O}(k^{-2}), R}) = - \eta_{\O (k^{-2}), R}, 
\end{equation}
for the K\"ahler potential  $\phi_{\mathcal{O}(k^{-2}), R}$. Indeed, $\phi_{\mathcal{O}(k^{-2}), R}$ can be chosen to be invariant under the $\bT^s$-action induced on $\PE \to M$, since the differential operator $L_{Scal, F}= \LichOp_F$ itself is invariant under this action (See point 3. of Remark \ref{rem:LichOperator}).
\begin{lemma}\label{1stRsol} For $\theta \in R$, there exists a unique
$\rho \in R$ such that
\[L_{Scal, F} (\rho)=\theta. \]
\end{lemma}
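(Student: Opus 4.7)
The plan is to reduce the problem to the analysis of the Lichnerowicz operator on a single fibre and then assemble the fibrewise solutions into a smooth function on $\PE$. By Lemma \ref{1storderLin} (and the subsequent remark) we have $L_{Scal, F} = \LichOp_F$, the fibrewise Lichnerowicz operator on $(\PE_y, \omega_{FS}) \cong (\CP^{r-1}, \omega_{FS})$. This is a self-adjoint, fourth-order elliptic operator on each closed fibre. By point~2 of Remark \ref{rem:LichOperator}, its kernel on a single fibre consists of Hamiltonians of (real) holomorphic Killing vector fields together with constants; on $(\CP^{r-1}, \omega_{FS})$ this is precisely $\Enu|_{\PE_y} \oplus \R \cdot 1$, which is by definition the $L^2$-orthogonal complement of $R|_{\PE_y}$ in $C^\infty(\PE_y)$.

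Given this identification of the kernel, self-adjointness of $\LichOp_F|_{\PE_y}$ and standard elliptic theory on the closed manifold $\CP^{r-1}$ imply that its image equals $R|_{\PE_y}$ and that the restriction
\[
\LichOp_F|_{R|_{\PE_y}} : R|_{\PE_y} \longrightarrow R|_{\PE_y}
\]
is an isomorphism (onto the appropriate Sobolev completions, and hence onto smooth functions by elliptic regularity). For $\theta \in R$, solving fibrewise produces a unique $\rho(y) \in R|_{\PE_y}$ with $L_{Scal, F}(\rho(y)) = \theta|_{\PE_y}$. Global uniqueness in $R$ is then immediate, since $R \cap \ker L_{Scal, F} = R \cap (\Enu \oplus C^\infty(M)) = \{0\}$.

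The main technical point, and the only non-trivial one, is smoothness of the assembled $\rho$ on the total space $\PE$. Because every fibre is isometric to $(\CP^{r-1}, \omega_{FS})$, the kernel of $\LichOp_F|_{\PE_y}$ has locally constant dimension equal to $\dim \mathfrak{su}(r) + 1$, and the fibrewise $L^2$-projection onto $R|_{\PE_y}$ depends smoothly on $y \in M$. The fibrewise Green's operator on $R$ can then be realised as a smoothly varying family of vertical pseudodifferential operators on $\PE$; applied to $\theta \in R \subset C^\infty(\PE)$, it yields $\rho \in C^\infty(\PE)$, which by construction lies in $R$. Invariance under the $\bT^s$-action, should it be needed later, is automatic from the $\bT^s$-invariance of $\LichOp_F$ (point~3 of Remark \ref{rem:LichOperator}) and uniqueness. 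This Fredholm-family smoothness argument, rather than the fibrewise algebra, is the genuine obstacle; once it is in place, the lemma follows.
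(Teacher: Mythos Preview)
Your proof is correct and follows essentially the same approach as the paper: solve the equation fibrewise using that $L_{Scal,F}=\LichOp_F$ restricts to a self-adjoint elliptic isomorphism on $R|_{\PE_y}$, then argue smooth dependence of the fibrewise inverse on the base point. The paper handles transverse smoothness more tersely (``$(L_{Scal,F})_\sigma$ is a smooth family of differential operators, the required regularity properties follow''), whereas you spell out the constant-rank-kernel and smooth-Green's-operator argument; but the substance is the same.
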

\begin{proof} (Modified from the analogous result for Kodaira fibrations, \cite[Lemma~3.6]{F}.) \\
Given the function $\rho \in R$, denote by $\rho_{\sigma}$
the restriction of $\rho$ to the fibre of $\PE \to M$ over $\sigma \in M$. The 
operator $L_{Scal, F}$ is just the linearisation of the scalar curvature map on K\"ahler potentials determined by the induced Fubini-Study metric on that fibre. 
By point 2. of Remark \ref{rem:LichOperator}, this operator is linear elliptic, self-adjoint and also
an isomorphism for functions in $R$. 
Since functions in $R$ are ($L^2$-)orthogonal to $\Enu$
and also to the constant functions, we can certainly solve the fibrewise
equation $(L_{Scal, F})_{\sigma}
\rho_{\sigma} = \theta_{\sigma}$, uniquely. 
Patching together, using the uniqueness
of the fibrewise solutions
$\rho_{\sigma}$, gives a solution to $L_{Scal, F}
(\rho)=\theta$. Because the operator $L_{Scal, F}$ is only elliptic in the vertical
directions, we have to check that
the function $\rho$ is also smooth transverse to the fibres. However, since
$\rho_{\sigma} = (L_{Scal, F})_{\sigma}^{-1} \theta_{\sigma}$, and the fact that
$(L_{Scal, F})_{\sigma}$
is a smooth family of differential operators, the required regularity properties
follow.
\end{proof}
Applying Lemma \ref{1stRsol} and using point 3. of Remark \ref{rem:LichOperator} gives us the existence of a $\bT^s$-invariant solution $\phi_{\mathcal{O}(k^{-2}), R} \in R \cap C^{\infty}_{\bT^s} (\PE , \bR)$ of equation \eqref{eq:FibreScalCorrkminus2}.

Adding the $\bT^s$-invariant potential $\idd k^{-2} \phi_{\mathcal{O}(k^{-2}), R}$ with $L_{Scal, F} (\phi_{\mathcal{O}(k^{-2}), R}) = - \eta_{\O (k^{-2}), R}$ to $\omega_k$ can be considered as changing 
\[ 
\omega_{FS} \mapsto \omega_{FS}+ k^{-2} \idd \phi_{\mathcal{O}(k^{-2}), R}. 
\]
So for the term
\[
\mu^* \left( h', i \sum_{p=1}^s (-b \lambda_p + k^{-1}  \gamma_p) Id_{E_p} \right)
\]
in equation \eqref{1stRcorrect} to \emph{remain} a Hamiltonian for the vector field 
\[
\widehat{i \sum_{p=1}^s (-b \lambda_p + k^{-1}  \gamma_p) Id_{E_p}}
\]
(again defined as in Definition \ref{def:InfAction}) with respect to the perturbed metric $\omega_{FS}+ k^{-2} \idd \phi_{\mathcal{O}(k^{-2}), R}$, it will change according to the following Lemma.
\begin{lemma}\label{lem:HamilVar}
Given a Hamiltonian $F$ for some vector field $V$ in the Lie-algebra $\mathfrak{ham}(M,J, \omega)$
of the Hamiltonian isometry group $Ham(M,J, \omega)$ of a (compact) K\"ahler manifold $(M,J,\omega,g)$.
Varying $\omega$ by adding a $V$-invariant K\"ahler potential $\psi \in C^{\infty}(M)$, i.e. $\mathcal{L}_V \psi=0$, such that $\omega'=\omega+ \idd \psi$,
varies $F$ according to the rule
\begin{equation}\label{eq:HamilVar}
F'=F+ \frac{1}{2}d \psi (JV)=F - \frac{1}{2} \nabla F \cdot \nabla \psi,
\end{equation}
up to the addition of a constant. The gradient and inner product are both taken
with respect to the metric $g$ corresponding to $\omega$.
\end{lemma}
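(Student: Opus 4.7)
The plan is to reduce the statement to a pointwise computation of $\iota_V(i\dbar\partial\psi)$. Since $F$ is a Hamiltonian for $V$ with respect to $\omega$ and $\omega' = \omega + i\dbar\partial\psi$, a Hamiltonian for $V$ with respect to $\omega'$ is produced (up to an additive constant) by finding a primitive of the one-form $\iota_V(i\dbar\partial\psi)$ and adding it to $F$ with the appropriate sign; the content of the lemma is that $\tfrac12 d\psi(JV)$ is such a primitive.

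The first key step is to rewrite $i\dbar\partial\psi = d(i\partial\psi)$ (which is immediate from $\partial^2=0$) and apply Cartan's magic formula:
\[
\iota_V(i\dbar\partial\psi) \;=\; \iota_V d(i\partial\psi) \;=\; \mathcal{L}_V(i\partial\psi) - d\bigl(\iota_V(i\partial\psi)\bigr).
\]
Because $V$ is real holomorphic ($\mathcal{L}_V J=0$), the operator $\mathcal{L}_V$ preserves the $(p,q)$-decomposition and in particular commutes with $\partial$; together with the hypothesis $\mathcal{L}_V\psi=0$ this gives $\mathcal{L}_V(i\partial\psi)= i\partial(\mathcal{L}_V\psi)=0$, so $\iota_V(i\dbar\partial\psi) = -d\bigl(\iota_V(i\partial\psi)\bigr)$. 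The problem thus collapses to the pointwise identity $\iota_V(i\partial\psi) = \tfrac12 d\psi(JV)$.

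The second step performs this identification via the type decomposition $V=V^{1,0}+V^{0,1}$. Since $\partial\psi$ is of type $(1,0)$, only $V^{1,0}$ contributes, giving $\iota_V(i\partial\psi) = iV^{1,0}(\psi)$. The reality of $\psi$ and $V$ implies $V^{0,1}(\psi)=\overline{V^{1,0}(\psi)}$, while $V(\psi)=0$ forces $V^{1,0}(\psi)+V^{0,1}(\psi)=0$, so $V^{1,0}(\psi)$ is purely imaginary. Using that $J$ acts as $+i$ on $T^{1,0}$ and $-i$ on $T^{0,1}$,
\[
d\psi(JV) \;=\; JV(\psi) \;=\; i V^{1,0}(\psi) - i V^{0,1}(\psi) \;=\; 2i\,V^{1,0}(\psi),
\]
giving exactly $\iota_V(i\partial\psi)=\tfrac12 d\psi(JV)$. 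Combining with the previous step produces the first equality of \eqref{eq:HamilVar}, and the second equality is the straightforward translation through the Hamiltonian Killing relation between $V$ and $\nabla F$ (which with the paper's conventions reads $JV=-\nabla F$), yielding $d\psi(JV)=-\nabla F\cdot\nabla\psi$.

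I expect the only delicate aspect to be the bookkeeping of signs against the paper's normalisations of $\iota_V\omega$ versus $dF$ and of $V$ versus $\nabla F$; the geometric substance of the proof is just Cartan's formula together with the holomorphicity of $V$ and the $(1,0)$-$(0,1)$ decomposition of tangent vectors.
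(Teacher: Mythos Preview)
Your argument is correct and follows essentially the same route as the paper: both apply Cartan's formula to a primitive of $i\dbar\partial\psi$ and use the real holomorphicity of $V$ together with $\mathcal{L}_V\psi=0$ to kill the Lie-derivative term. The only cosmetic difference is that the paper works with the real primitive $-\tfrac12 d^c\psi$ (so that $\iota_V d^c\psi=-d\psi(JV)$ is immediate from the definition of $d^c$), whereas you use the complex primitive $i\partial\psi$ and recover the same identity via the $(1,0)$/$(0,1)$ decomposition.
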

\begin{proof}
The vector field $V$ and the Hamiltonian $F$ are related via
\[\iota_V \omega = -dF.\]
If $\omega'=\omega+ \idd \psi=\omega - \frac{1}{2} d d^c \psi$, then
\begin{align*}
\iota_V \omega' & = \iota_V \omega - \frac{1}{2} \iota_V d d^c \psi\\
& = -dF -\frac{1}{2} \mathcal{L}_V (d^c \psi)+\frac{1}{2} d(\iota_V d^c \psi) \ \text{(by Cartan's formula)}\\
& = -dF -\frac{1}{2} d^c (\mathcal{L}_V \psi)+\frac{1}{2} d(\iota_V d^c \psi) \ \text{(as $V$ is real holomorphic)}\\
& = -d(F-\frac{1}{2} \iota_V d^c \psi) \ \text{(since $\mathcal{L}_V \psi=0$)}\\
& = -d(F+\frac{1}{2}d \psi (JV)).
\end{align*}
This computation shows that $V$ is a Hamiltonian vector field with respect to $\omega'$,
and the corresponding Hamiltonian function is $F'=F+ \frac{1}{2}d \psi (JV)=F - \frac{1}{2} \nabla F \cdot \nabla \psi$.
\end{proof}
Applying Lemma \ref{lem:HamilVar} to $\mu^* \left( h', i \sum_{p=1}^s (-b \lambda_p + k^{-1}  \gamma_p) Id_{E_p} \right)$ and $\omega_{FS}+ k^{-2} \idd \phi_{\mathcal{O}(k^{-2}), R}$ shows that $\mu^* \left( h', i \sum_{p=1}^s (-b \lambda_p + k^{-1}  \gamma_p) Id_{E_p} \right)$ transforms via
\begin{align*}
\mu^* \left( h', i \sum_{p=1}^s (-b \lambda_p + k^{-1}  \gamma_p) Id_{E_p} \right) \mapsto & \mu^* \left( h', i \sum_{p=1}^s (-b \lambda_p + k^{-1}  \gamma_p) Id_{E_p} \right) \\
& - \frac{k^{-2}}{2}
\nabla \phi_R \cdot \nabla \mu^* \left( h', i \sum_{p=1}^s (-b \lambda_p + k^{-1}  \gamma_p) Id_{E_p} \right).
\end{align*}
This completes the task of correcting the $R$-component $\eta_{\O (k^{-2}), R}$ of the $\mathcal{O} (k^{-2})$-error $\eta_{\O (k^{-2})}$.

\subsubsection{Correcting the $C^{\infty}(M)$-part}\label{subsec:Cpart}

In order to correct the $C^{\infty}(M)$-component $\eta_{\O (k^{-2}), C^{\infty}(M)}$ of the $\mathcal{O} (k^{-2})$-error $\eta_{\O (k^{-2})}$,
we will perturb the metric $\omega_M$, pulled back from the base, with a K\"ahler potential $\phi_{\O (k^{-2}), C^{\infty}(M)} \in C^{\infty}(M)$ in a suitable way.

\

From equation \eqref{1stLin} we know that the scalar curvature $Scal(\omega_M)$
of $\omega_M$ (the pulled back metric from the base)
appears at order $\mathcal{O}(k^{-1})$ in $Scal(\omega_k)$---it is the $C^{\infty}(M)$-part
of the $\mathcal{O}(k^{-1})$-term of $Scal(\omega_k)$. 
Using the K\"ahler potential $\phi_{\O (k^{-2}), C^{\infty}(M)} \in C^{\infty}(M)$ to perturb $\omega_k$ can
be thought of as changing the metric $\omega_M$, scaled by the factor of $k$ in the definition
of $\omega_k$. Because of this
scaling, the effect of adding $\idd \phi_{\O (k^{-2}), C^{\infty}(M)}$ to $\omega_k$ is the
same as adding $k^{-1} \idd \phi_{\O (k^{-2}), C^{\infty}(M)}$ to $\omega_M$.

With the following \emph{formal} linearisation formula---derived exactly the same way
as equation~\eqref{eq:LinScalMap}---giving the variation of $Scal(\omega_M)$
\begin{equation}\label{eq:BaseLin}
Scal \brak{\omega_M+ k^{-1} \idd  \phi_{\O (k^{-2}), C^{\infty}(M)}}= Scal(\omega_M) +
k^{-1} L_{Scal,M} (\phi_{\O (k^{-2}), C^{\infty}(M)}) + \mathcal{O}(k^{-2}),
\end{equation}
(in which $L_{Scal,M}$ denotes the \emph{formal} linearisation of the scalar curvature map on
K\"ahler potentials on the base) we obtain by adding
$\idd \phi_{\O (k^{-2}), C^{\infty}(M)}$ to the perturbed metric \linebreak $\omega_k + k^{-1} \idd \phi_{\mathcal{O}(k^{-2}),\Enu} + k^{-2} \idd \phi_{\mathcal{O}(k^{-2}), R}$, \emph{considering
it as a change in} $\omega_M$, using equations~\eqref{1stRcorrect}
and \eqref{eq:BaseLin}
\[
Scal \brak{\omega_k + k^{-1} \idd \phi_{\mathcal{O}(k^{-2}),\Enu} +k^{-2} \idd \phi_{\mathcal{O}(k^{-2}), R} + \idd \phi_{\O (k^{-2}), C^{\infty}(M)}} 
\]
\[
= \oC+k^{-1} \left( Scal(\omega_M)
 + \mu^* \left( h', i \sum_{p=1}^s (-b \lambda_p + k^{-1}  \gamma_p) Id_{E_p} \right)  \right)
\]
\[
+ k^{-2} \left(  \eta_{\O (k^{-2}), C^{\infty}(M)} +L_{Scal,M} (\phi_{\O (k^{-2}), C^{\infty}(M)}) \right)
+ \mathcal{O}(k^{-3}).
\]

Since the base metric $\omega_M$ is cscK, using equation \eqref{eq:LinScalNeu} gives
\[L_{Scal,M} = \LichOp_M,\]
where $\LichOp_M$ is the (self-adjoint, fourth-order linear elliptic) 
Lichnerowicz operator on the base. Analogous to Lemma \ref{1stRsol} we have.
\begin{lemma}\label{lem:BasePotCorr}
For $\beta \in C^{\infty}_0 (M)$, there exists a unique $\alpha \in C^{\infty}_0 (M)$ such that
\[L_{Scal, M} (\alpha)=\beta. \]
\end{lemma}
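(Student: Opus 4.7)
The plan is to exploit the hypothesis of Theorem~\ref{Thm:MainResult} that the base manifold $(M,\omega_M)$ has no non-trivial holomorphic automorphisms, together with the fact that $\omega_M$ is cscK, to reduce the lemma to a routine application of self-adjoint elliptic theory for the Lichnerowicz operator $\LichOp_M$.

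First, since $\omega_M$ is cscK, equation~\eqref{eq:LinScalNeu} collapses (the $\tfrac{1}{2} \nabla Scal \cdot \nabla \phi$ term vanishes identically) to give $L_{Scal,M} = \LichOp_M$, as already noted just above the statement. Thus the equation to solve is $\LichOp_M \alpha = \beta$. By point~1 of Remark~\ref{rem:LichOperator}, $\LichOp_M$ is a self-adjoint, fourth-order linear elliptic operator acting on $C^{\infty}(M, \bR)$, with continuous linear extensions between Sobolev completions.

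Next I would identify the kernel (and hence, by self-adjointness, the cokernel) of $\LichOp_M$. By point~2 of Remark~\ref{rem:LichOperator}, the kernel of $\LichOp_M$ in $C^{\infty}(M,\bR)$ is identified via the Hamiltonian construction with $\mathfrak{ham}(M,J,\omega_M) \oplus \bR$ (the Hamiltonian Killing vector fields together with the constants lying in the kernel trivially). Since $(M,J)$ is assumed to have \emph{discrete} holomorphic automorphism group, $\mathfrak{aut}^0_{red}(M,J) = 0$ and hence $\mathfrak{ham}(M,J,\omega_M) = 0$. Therefore $\ker \LichOp_M$ consists only of the constants, and by self-adjointness the same is true of $\coker \LichOp_M$.

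Finally, standard elliptic theory (Fredholm alternative for self-adjoint elliptic operators on compact manifolds) gives that $\LichOp_M$ restricts to an isomorphism on the $L^2$-orthogonal complement of the constants. In particular, for any $\beta \in C^{\infty}_0(M)$ there is a unique $\alpha \in C^{\infty}_0(M)$ with $\LichOp_M \alpha = \beta$; smoothness of $\alpha$ follows from elliptic regularity applied to the Sobolev solution. There is no real obstacle here: the only point that genuinely uses the hypotheses of the paper (rather than abstract elliptic theory) is the triviality of the kernel, which is exactly where the assumption of discrete automorphism group of $M$ enters. This is strictly easier than the analogous fibrewise Lemma~\ref{1stRsol}, because here the operator is elliptic on all of $M$ rather than only in vertical directions, so no patching argument is required.
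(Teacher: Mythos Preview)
Your proof is correct and follows essentially the same approach as the paper: identify $L_{Scal,M}$ with $\LichOp_M$ via the cscK assumption, show the kernel is just the constants using the absence of holomorphic automorphisms together with point~2 of Remark~\ref{rem:LichOperator}, and then invoke standard self-adjoint elliptic theory. The only cosmetic difference is that the paper phrases the kernel computation via the Matsushima--Lichnerowicz theorem (holomorphic automorphisms complexify Hamiltonian isometries on a cscK manifold), whereas you use the more elementary inclusion $\mathfrak{ham}(M,J,\omega_M)\subset\mathfrak{aut}^0_{red}(M,J)$ directly; for the implication needed here your route is in fact sufficient and slightly cleaner.
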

\begin{proof}
The cscK base manifold $(M,\omega_M)$ 
is assumed to have no holomorphic automorphisms. By
the Matsushima-Lichnerowicz theorem, holomorphic automorphisms complexify
Hamiltonian isometries modulo trivial isometries on a cscK manifold; 
hence the base has
no non-trivial Hamiltonian isometries, and thus by point 2. of Remark \ref{rem:LichOperator},
$\ker L_{Scal, M} = \ker \LichOp_M \cong \bR$. Since $L_{Scal,M} = \LichOp_M$ is a self-adjoint, fourth-order linear elliptic differential operator on the compact manifold $(M,\omega_M)$, standard elliptic PDE-theory immediately gives the (unique) invertibility of $L_{Scal, M}: C^{\infty}_0 (M) \to C^{\infty}_0 (M)$ as $C^{\infty}_0 (M)=C^{\infty} (M) / \bR$ on the compact manifold $M$.
\end{proof}

Hence up to the addition of constants we can solve
\[L_{Scal,M} (\phi_{\O (k^{-2}), C^{\infty}(M)}) = - \eta_{\O (k^{-2}), C^{\infty}(M)},\]
in case the right hand side has mean-value zero.
Denoting by $c_2 := \overline{\eta_{\O (k^{-2}), C^{\infty}(M)}}$ the mean value of $\eta_{\O (k^{-2}), C^{\infty}(M)}$ (with respect to the K\"ahler metric corresponding to $\omega_M$), we define
$\phi_{\O (k^{-2}), C^{\infty}(M)}$ to be the solution of
\begin{equation}\label{eq:CinfKaehlerPotCorr}
L_{Scal,M} (\phi_{\O (k^{-2}), C^{\infty}(M)}) =\LichOp_M (\phi_{\O (k^{-2}), C^{\infty}(M)})= c_2 - \eta_{\O (k^{-2}), C^{\infty}(M)}.
\end{equation}
By Lemma \ref{lem:BasePotCorr} this equation can be solved in $\Cinf (M)$ since its right hand side has mean-value zero.
Moreover, since the K\"ahler potential $\phi_{\O (k^{-2}), C^{\infty}(M)}$ is pulled back from the base, it is automatically \emph{invariant} under the (Hamiltonian) $\bT^s$-action induced by $Id_{E_1},\dots,Id_{E_S} \in \End (E)$ on $\PE$.

Therefore, the $C^{\infty}(M)$-part $\eta_{\O (k^{-2}), C^{\infty}(M)}$ of the $\mathcal{O} (k^{-2})$-error
$\eta_{\O (k^{-2})}$ is corrected modulo the constant $c_2$, i.e.
\[
Scal \brak{\omega_k + k^{-1} \idd \phi_{\mathcal{O}(k^{-2}),\Enu} +k^{-2} \idd \phi_{\mathcal{O}(k^{-2}), R} + \idd \phi_{\O (k^{-2}), C^{\infty}(M)}} 
\]
\begin{equation}\label{eq:OkMinus2Corr}
= \oC+k^{-1} \left( Scal(\omega_M)
 + \mu^* \left( h', i \sum_{p=1}^s (-b \lambda_p + k^{-1}  \gamma_p) Id_{E_p} \right)  \right)
\end{equation}
\[
+ c_2 k^{-2} + \mathcal{O}(k^{-3}).
\]
Thus we completely corrected the $\mathcal{O} (k^{-2})$-error
$\eta_{\O (k^{-2})}$.

\subsection{The higher order approximate solutions}\label{sec:HighApproxSol}

In this section we will complete our approximation scheme.
This enables us to find---in the sense of equation \eqref{eq:ExtrEqApproxDef}---an approximate formal power series solution to the extremal metric equation~\eqref{eq:ExtrCond1}, pointwise arbitrarily close to a genuine solution.
\begin{remark}\label{rem:QNotation}
From now on, in order to save on notation, we will denote the Hamiltonian constructed while perturbing the map $\mu^*$ in our induction scheme by $Q$.
\end{remark}
\begin{theorem}[Formal solutions to the extremal metric equation]\label{ApproxThm}
Given an integer $n \geq 1$ we can find K\"ahler potentials, invariant under the $\bT^s$-action induced by $Id_{E_1},\dots,Id_{E_s} \in \End (E)$ on $\PE$,
\[
\phi_{i,\Enu} \in C^{\infty}_{\bT^s} (\PE, \bR), \ \phi_{i,R} \in R \cap C^{\infty}_{\bT^s} (\PE, \bR),
\ \phi_{i,C^{\infty} (M)} \in C^{\infty} (M) \cap C^{\infty}_{\bT^s} (\PE, \bR),
\] 
\[
\text{for} \ i=1, \dots,n
\]
such that the metric
\[
\omega_{k,n}=\omega_k+ \idd \sum_{i=1}^n \left( k^{-i} \phi_{i,\Enu} + k^{-i-1} \phi_{i,R}
+  k^{-i+1} \phi_{i,C^{\infty} (M)} \right)
\]
is an $(n+1)$-th order approximate solution to the extremal metric equation
\eqref{eq:ExtrCond1},
by which we mean, as in equation \eqref{eq:ExtrEqApproxDef}, that pointwise on $\PE$
\begin{equation}\label{eq:ExtrEqApproxSol}
Scal( \omega_{k,n} ) - Q (\omega_{k,n}) - \oC = \sum_{i=1}^{n+1} c_i k^{-i} + \mathcal{O}(k^{-n-2}),
\quad \oC,c_1, \dots, c_{n+1} \in \bR,
\end{equation}
where $Q(\omega_{k,n})$ is a Hamiltonian with respect to the purely vertical part $(\omega_{k,n})_{\VerT}$ of $\omega_{k,n}$ for a (Hamiltonian Killing) vector field in the Lie algebra $\mathfrak{t}^s$ of the torus $\bT^s$ (generated by the vector fields induced by $Id_{E_1}, \dots, Id_{E_s} \in \End (E)$ on $\PE$).
\end{theorem}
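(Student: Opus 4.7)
The plan is to proceed by induction on $n$, with the base case $n=1$ already established in Sections~\ref{subsec:Epart}--\ref{subsec:Cpart} (summarised by equation \eqref{eq:OkMinus2Corr}, which is precisely \eqref{eq:ExtrEqApproxSol} for $n=1$, after identifying the bracketed expression in \eqref{eq:OkMinus2Corr} with $Q(\omega_{k,1}) + \overline{C}$-corrections). So assume we have constructed $\phi_{i,\Enu}, \phi_{i,R}, \phi_{i,C^{\infty}(M)}$ for $i=1,\dots,n$ such that the metric $\omega_{k,n}$ satisfies an equation of the form
\[
Scal(\omega_{k,n}) - Q(\omega_{k,n}) - \overline{C} = \sum_{i=1}^{n+1} c_i k^{-i} + k^{-n-2}\,\eta_{\mathcal{O}(k^{-n-2})} + \mathcal{O}(k^{-n-3}),
\]
where the leading error term $\eta=\eta_{\mathcal{O}(k^{-n-2})}$ splits according to \eqref{eq:FuncSpace Split} as
\[
\eta = \eta_{\Enu} + \eta_{R} + \eta_{C^{\infty}(M)}.
\]
We construct the next-order correction by addressing each component in turn, mimicking the three-step procedure of Sections~\ref{subsec:Epart}--\ref{subsec:Cpart}.

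\textbf{Correcting the $\Enu$-part.} Following Section~\ref{subsec:Epart}, we perturb the Hermitian bundle metric from $h$ to $h'' = h(1 + k^{-(n+1)} V_{n+1})$ for a Hermitian bundle endomorphism $V_{n+1}$. The analysis leading to equation \eqref{eq:CurvChange} goes through unchanged (with a shift in the power of $k$), showing that this change modifies the scalar curvature at order $k^{-n-2}$ by a term involving $\mu^*(h, i \Delta_{\partial_h} V_{n+1})$. We write $\mu^*(h,U_{n+1}) = \eta_{\Enu}$ for some skew-Hermitian $U_{n+1}$ and attempt to solve $b\, i \Delta_{\partial_h} V_{n+1} = U_{n+1}$. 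Since $E = E_1 \oplus \cdots \oplus E_s$ splits as a sum of stable subbundles, $\coker_{\End(E)} \Delta_{\partial_h}$ is spanned by $Id_{E_1},\dots,Id_{E_s}$, so we can only solve the projected equation; the obstruction $\proj_{\coker} U_{n+1} = i(\gamma_1^{(n+1)} Id_{E_1} + \dots + \gamma_s^{(n+1)} Id_{E_s})$ is absorbed, as in Section~\ref{subsec:Epart}, into a further shift of the weights of the Hamiltonian $\mathbb{T}^s$-action, i.e.\ into an update of $Q(\omega_{k,n})$. The resulting bundle-metric perturbation is equivalent, at the level of $\omega_k$, to adding a $\mathbb{T}^s$-invariant K\"ahler potential $k^{-(n+1)} \phi_{n+1,\Enu}$.

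\textbf{Correcting the $R$- and $C^{\infty}(M)$-parts.} For the $R$-part we add $k^{-(n+2)} \idd \phi_{n+1,R}$, where $\phi_{n+1,R} \in R \cap C^{\infty}_{\bT^s}(\PE,\bR)$. By Lemma~\ref{1storderLin}, the formal linearisation $L_{Scal,\omega_k}$ equals $L_{Scal,F} = \LichOp_F$ at leading order, so Lemma~\ref{1stRsol} (combined with the $\bT^s$-invariance from point~3 of Remark~\ref{rem:LichOperator}) produces a unique $\phi_{n+1,R}\in R$ solving $L_{Scal,F}(\phi_{n+1,R}) = -\eta_R$. One must then apply Lemma~\ref{lem:HamilVar} to update $Q(\omega_{k,n+1})$ so that it remains a Hamiltonian for the perturbed fibrewise K\"ahler form. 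For the $C^{\infty}(M)$-part, we add a potential $k^{-n} \idd \phi_{n+1,C^{\infty}(M)}$ pulled back from the base; this acts as a perturbation of $\omega_M$ of order $k^{-(n+1)}$. Letting $c_{n+2}$ denote the mean value of $\eta_{C^{\infty}(M)}$ with respect to $\omega_M$, we invoke Lemma~\ref{lem:BasePotCorr} (which uses that $(M,\omega_M)$ is cscK with no holomorphic automorphisms, so $\LichOp_M$ is invertible on $C^{\infty}_0(M)$) to solve $\LichOp_M(\phi_{n+1,C^{\infty}(M)}) = c_{n+2} - \eta_{C^{\infty}(M)}$. The constant $c_{n+2}$ becomes the next coefficient in the expansion \eqref{eq:ExtrEqApproxSol}.

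\textbf{Main obstacle and bookkeeping.} The routine issue is careful bookkeeping of the powers of $k^{-1}$: the three corrections enter $\omega_{k,n+1}$ with different weights $k^{-(n+1)}$, $k^{-(n+2)}$, $k^{-n}$ respectively, and each interacts with the lower-order corrections to produce cross-terms of order $\mathcal{O}(k^{-n-3})$ that are safely absorbed into the error. The genuine difficulty is the $\Enu$-part: the splitting \eqref{eq:FuncSpace Split} itself depends on the (perturbed) bundle metric, and the Fubini-Study metric on the fibres of $\PE$ changes as we update $h$, so one must verify that the $\mathcal{E}$/$R$/$C^{\infty}(M)$ decomposition of the next-order error is consistent with the decompositions used at earlier stages up to terms of order $\mathcal{O}(k^{-n-3})$; this is possible precisely because each bundle-metric perturbation is itself of order $k^{-(n+1)}$, so it affects the splitting only at higher orders. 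With these verifications, the induction closes and produces $\omega_{k,n}$ satisfying \eqref{eq:ExtrEqApproxSol} for every $n$.
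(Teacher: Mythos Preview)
Your proposal is correct and follows essentially the same approach as the paper: the paper's proof is a single sentence stating that the result follows by induction using the steps carried out in Sections~\ref{subsec:Epart}--\ref{subsec:Cpart} as the inductive steps, and your argument is precisely a detailed unpacking of that induction. Your additional remarks on the bookkeeping of the powers of $k^{-1}$ and on the $h$-dependence of the splitting \eqref{eq:FuncSpace Split} are accurate elaborations of points the paper leaves implicit.
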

\begin{proof}
The proof follows by induction using the steps carried out in order to find the second order approximate solution in Sections \ref{subsec:Epart}, \ref{subsec:Rpart}, \ref{subsec:Cpart} as the inductive steps.
\end{proof}

\section{Analytic aspects}\label{Chap6}

The whole Section \ref{Chap6} bears many similarities with \cite[Sections~4--7]{F}, and
in fact many results and ideas of J. Fine were adapted for our case and are variations of his results. 

\subsection{The Implicit Function Theorem}\label{IFTsection}

We are going to use a parameter-dependent implicit function theorem (IFT), the parameter being the adiabatic parameter $k$,
in order to show the existence of a genuine solution of the extremal metric
equation, lying nearby the approximate solution found in Theorem \ref{ApproxThm}. 

\begin{theorem}[Implicit function theorem]\label{IFT}
\
\begin{itemize}
\item
Let $F:B_1 \to B_2$ be a differentiable map of Banach spaces, whose derivative at $0$, $DF|_0$, is an epimorphism of Banach spaces, with right-inverse $P$.
\item
Let $\delta'$ be the radius of the closed ball in $B_1$, centred at $0$, on which $F-DF|_0$ is Lipschitz, with constant $1/(2\|P\|)$.
\item
Let $\delta =\delta'/(2\|P\|)$. 
\end{itemize}
Whenever $y\in B_2$ satisfies $\|y-F(0)\| < \delta$, there exists $x \in B_1$ with $F(x)=y$. 
\end{theorem}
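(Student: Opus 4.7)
The plan is to recast $F(x)=y$ as a fixed-point equation and apply the Banach contraction mapping principle on the closed ball of radius $\delta'$ centred at $0$ in $B_1$. This is the classical quantitative inverse function theorem argument; the only real content is bookkeeping of the two constants $\delta'$ and $\delta$ so that the contraction and self-mapping estimates dovetail.

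First I would introduce the nonlinear remainder $G(x):=F(x)-DF|_0(x)$, which by hypothesis is Lipschitz on the closed $\delta'$-ball with constant $1/(2\|P\|)$. Using the right-inverse $P$, the equation $F(x)=y$ is equivalent to $DF|_0(x)=y-G(x)$, and applying $P$ suggests looking for fixed points of
\[ T(x):=P(y)-P(G(x)). \]
Since $DF|_0\circ P=\mathrm{id}_{B_2}$, any fixed point of $T$ satisfies $DF|_0(x)=y-G(x)$, hence $F(x)=DF|_0(x)+G(x)=y$, as required.

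Next I would verify the two hypotheses of Banach's theorem on the closed $\delta'$-ball $\overline{B}\subset B_1$. For the contraction estimate, for $x_1,x_2\in\overline{B}$,
\[ \|T(x_1)-T(x_2)\|=\|P(G(x_1)-G(x_2))\|\leq \|P\|\cdot\tfrac{1}{2\|P\|}\|x_1-x_2\|=\tfrac{1}{2}\|x_1-x_2\|. \]
For the self-mapping property, note $T(0)=P(y-F(0))$, so the assumption $\|y-F(0)\|<\delta=\delta'/(2\|P\|)$ gives $\|T(0)\|<\delta'/2$. Combined with the contraction estimate,
\[ \|T(x)\|\leq\|T(x)-T(0)\|+\|T(0)\|<\tfrac{1}{2}\|x\|+\tfrac{1}{2}\delta'\leq\delta' \]
whenever $\|x\|\leq\delta'$, so $T$ sends $\overline{B}$ into itself.

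Banach's fixed-point theorem then produces a unique $x\in\overline{B}$ with $T(x)=x$, and by the reformulation above this $x$ solves $F(x)=y$. I do not expect any genuine obstacle: everything reduces to the two elementary estimates above, and the constant $\delta=\delta'/(2\|P\|)$ is precisely chosen so that the initial displacement $\|T(0)\|$ is at most half the radius of the ball on which the contraction estimate is valid.
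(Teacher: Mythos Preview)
Your argument is correct and is the standard contraction-mapping proof of this quantitative implicit function theorem. The paper itself does not prove Theorem~\ref{IFT}; it simply states it and remarks that it is the same as \cite[Theorem~4.1]{F} with the derivative allowed to be merely an epimorphism with right-inverse, so there is nothing further to compare.
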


In fact, this statement of the IFT is the same as \cite[Theorem~4.1]{F}, except that we assume $DF|_0$ to be an \emph{epimorphism} of Banach spaces having only a \emph{right-inverse} $P$, instead of a ``two-sided'' inverse. The reason is that unlike Fine \cite{F} or Hong \cite{Ho1}, we actually have non-trivial Hamiltonian Killing vector fields on $\mathbb{P}(E)$, induced by the non-trivial automorphism group of the vector bundle $E \to M$ (remember: $E$ is \emph{unstable}, and \emph{not simple}
since $\Aut (E) \cong U(1)^s$). Therefore, the leading order part of the linearisation at $\omega_{k,n}$ of the ``approximate extremal metric operator\footnote[1]{We shall use ``AEMO'' as abbreviation for ``approximate extremal metric operator''.}'' 
\begin{equation}\label{eq:AEMODef}
\AEMO (\phi) :=Scal( \omega_{k,n} + \idd \phi) - Q (\omega_{k,n} + \idd \phi) - \oC
\end{equation}
(i.e. the left hand side of equation \eqref{eq:ExtrEqApproxSol}) on $\bT^s$-invariant K\"ahler potentials will have a non-trivial co-kernel in the function spaces $C^{\infty}_{\bT^s} (\mathbb{P}(E),\mathbb{R})$ and $L^2_{m, \bT^s} (\mathbb{P}(E),\mathbb{R})$; where we use the standard notation and denote by $L^2_m$ the Sobolev space of functions whose derivatives up to order $m$ are in $L^2$.

\begin{remark}\label{rem:SobolevkDep}
The Sobolev space $L^2_m (g_{k,n})$, defined via the metric $g_{k,n}$ corresponding to $\omega_{k,n}$,
contains the same functions for different values of the adiabatic parameter $k$, since the corresponding Sobolev norms $\| \cdot \|_{L^2_m (g_{k,n})}$ are equivalent for different values of $k$. (Although the constants of equivalence depend on $k$.)
\end{remark}

\paragraph{The parametrised equation}

\

\

Our goal is to solve the extremal metric equation \eqref{eq:ExtrCond1},
for $k \gg 0$ and \emph{fixed} $n$,
\begin{equation}\label{eq:ExtrEqKaehlerPot}
Scal(\omega_{k,n}+\idd \phi) - H(\omega_{k,n}+ \idd \phi) - \overline{S} = 0, 
\end{equation}
where $\oS$ is the average scalar curvature and $\phi$ is a $\bT^s$-invariant K\"ahler potential.
So it is reasonable to try to solve $\AEMO (\phi)=0$, with $\AEMO$ as in \eqref{eq:AEMODef}; which we want to do without having to worry about the obstructions coming from the non-trivial co-kernel
of the leading order part of its linearisation.
In order to handle this non-trivial co-kernel, we will employ essentially the same trick as already used in Section \ref{subsec:Epart} above. More precisely, denote the linearisation of $\AEMO$ at 
$\omega_{k,n}$ by $L_{{\rm AEMO},\omega_{k,n}}(\phi)$.
Using Lemma \ref{lem:LinScalNeu} to linearise the $Scal(\omega_{k,n})$-part in \eqref{eq:AEMODef}, and Lemma \ref{lem:HamilVar} to linearise the $Q(\omega_{k,n})$-part, on ($\bT^s$-invariant) K\"ahler potentials, we obtain
\begin{align}
L_{{\rm AEMO},\omega_{k,n}}(\phi)= & \LichOp (\phi) + \frac{1}{2} \nabla Scal(\omega_{k,n}) \cdot
\nabla \phi- \frac{1}{2} \nabla Q(\omega_{k,n}) \cdot \nabla \phi \nonumber \\
= & \LichOp (\phi) + \O (k^{-n-2}) \quad \text{(by using equation \eqref{eq:ExtrEqApproxSol})}, \label{eq:LAEMOLichOp}
\end{align}
where the gradient and inner product in the first line, and $\LichOp$ in both lines, are taken with respect to the metric corresponding to $\omega_{k,n}$.

The (co-)kernel of the self-adjoint operator $\LichOp$ in $C^{\infty}_{\bT^s} (\mathbb{P}(E),\mathbb{R})$ is isomorphic, via $\omega_{k,n}$, to the space of Hamiltonian Killing vector fields induced---as in Definition \ref{def:InfAction}---on $\PE$ by linear combinations of $Id_{E_1}, \dots, Id_{E_s} \in \End (E)$. Also, $Q(\omega_{k,n})$ in equation \eqref{eq:ExtrEqApproxSol}
is the Hamiltonian, with respect to the purely vertical part $(\omega_{k,n})_{\VerT}$ of $\omega_{k,n}$, for the vector field
\begin{equation}\label{eq:VFieldHam}
B:= \widehat{i \sum_{p=1}^s \left( -b k^{-1} \lambda_p + \sum_{l=1}^n k^{-l-1}  \gamma_{p,l} \right) Id_{E_p}}, \quad \lambda_p, \gamma_{p,l} \in \bR \ \text{for} \ p=1, \dots,s, \ l=1, \dots, n;
\end{equation}
constructed by iterating the procedure in Section \ref{subsec:Epart} in order to find $\omega_{k,n}$. (The additional factor of $k^{-1}$ in \eqref{eq:VFieldHam} is due to $Q(\omega_{k,n})$ \emph{not being multiplied} by $k^{-1}$ in equation \eqref{eq:ExtrEqApproxSol}; in contrast to $\mu^* (\cdots)$ in equation \eqref{eq:OkMinus2Corr}.) Therefore, we introduce an $s$-tuple of parameters $\Theta := (\theta_1, \dots \theta_s)$ with $\theta_1, \dots, \theta_s \in \bR$, in the vector field $B$ in \eqref{eq:VFieldHam} and define
\begin{equation}\label{eq:VFieldHamParam}
B':=B+B_{\Theta}:= \widehat{i \sum_{p=1}^s \left( -b k^{-1} \lambda_p + \sum_{l=1}^n k^{-l-1}  \gamma_{p,l} \right) (1+\theta_p) Id_{E_p}},
\end{equation}
with
\begin{equation}\label{eq:BVfieldTheta}
B_{\Theta}:=\widehat{i \sum_{p=1}^s \left( -b k^{-1} \lambda_p + \sum_{l=1}^n k^{-l-1}  \gamma_{p,l} \right) \theta_p Id_{E_p}};
\end{equation}
which can be interpreted as varying the (infinitesimal) action of $B$ on $\PE$. The parameter-dependent vector fields $B', B_{\Theta}$ are again Hamiltonian Killing vector fields on $\PE$ with $B', B_{\Theta} \in \mathfrak{t}^s$---the Lie algebra of $\bT^s$ (which is generated by the vector fields induced by $Id_{E_1}, \dots, Id_{E_s} \in \End (E)$ on $\PE$). Of course, the introduction of the $s$-tuple of parameters $\Theta$ makes the Hamiltonian for $B$ with respect to $(\omega_{k,n})_{\VerT}$---which we denote by $Q(\omega_{k,n},B)$---parameter-dependent, as well. Thus,
\[
Q(\omega_{k,n},B')=Q(\omega_{k,n}, B+ B_{\Theta})=Q(\omega_{k,n},B)+Q(\omega_{k,n},B_{\Theta}),
\]
since $Q$ is linear in the second argument
So, instead of solving $\AEMO (\phi)=0$ directly for $\phi \in C^{\infty}_{\bT^s} (\mathbb{P}(E),\mathbb{R})$, we will solve a ``parametrised version''. Therefore, we shall also consider the constant $\oC \in \bR$ in \eqref{eq:AEMODef} as a parameter, which we write as $\oC+\oR$, and solve
\begin{equation}\label{eq:ExtrEqKaehlerPotPara}
Scal(\omega_{k,n}+\idd \phi) - Q(\omega_{k,n}+ \idd \phi, B) - Q(\omega_{k,n},B_{\Theta}) - \oC - \oR= 0, 
\quad \Theta \in \bR^s, \ \oR \in \bR,
\end{equation}
for $\phi \in C^{\infty}_{\bT^s} (\mathbb{P}(E),\mathbb{R})$ \emph{and} $\Theta \in \bR^s, \ \oR \in \bR$.
We define the corresponding ``parametrised extremal metric operator'' to be
\begin{equation}\label{eq:ParaExtrMetOPDef}
\AEMO^{\Theta, \oR} (\phi) := Scal(\omega_{k,n}+\idd \phi) - Q(\omega_{k,n}+ \idd \phi, B) - Q(\omega_{k,n},B_{\Theta}) - \oC - \oR;
\end{equation}
and will denote its linearisation at $\omega_{k,n}$ by 
$L_{{\rm AEMO},\omega_{k,n}}^{\Theta, \oR}$.
Hence we get, as the operator is linear in the parameters $(\Theta,\oR)$,
\begin{align}
L_{{\rm AEMO},\omega_{k,n}}^{\Theta, \oR}(\phi)= & \LichOp (\phi) + \frac{1}{2} \nabla Scal(\omega_{k,n}) \cdot
\nabla \phi- \frac{1}{2} \nabla Q(\omega_{k,n},B) \cdot \nabla \phi - Q(\omega_{k,n},B_{\Theta}) - \oR \nonumber \\
= & \LichOp (\phi) - Q(\omega_{k,n},B_{\Theta}) - \oR  + \O (k^{-n-2}).  \label{eq:LAEMOLichOpPara}
\end{align}
\begin{lemma}\label{lem:ParamLin}
For the linearisation of $\AEMO^{\Theta, \oR} (\phi)$ (defined in \eqref{eq:ParaExtrMetOPDef}) at $\omega_{k,n}$ we get
\begin{equation}\label{eq:ParamLin}
L_{{\rm AEMO},\omega_{k,n}}^{\Theta, \oR}(\phi) = \LichOp (\phi) - Q(\omega_{k,n},B_{\Theta}) - \oR  + \O (k^{-n-2}).
\end{equation}
\end{lemma}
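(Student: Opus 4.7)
The plan is to decompose $\AEMO^{\Theta,\oR}$ as a sum of four smooth functionals---$Scal(\omega_{k,n}+\idd\phi)$, $-Q(\omega_{k,n}+\idd\phi,B)$, $-Q(\omega_{k,n},B_\Theta)$, $-\oC-\oR$---and to linearize each separately at $(\phi,\Theta,\oR)=(0,0,0)$; the total linearization is then the sum of the four partial ones.

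For the $\phi$-derivative of the first two pieces I would apply Lemma~\ref{lem:LinExNeu} (equivalently, a combination of Lemmas~\ref{lem:LinScalNeu} and \ref{lem:HamilVar}), treating $Q(\omega_{k,n},B)$ in the role of the Hamiltonian $H(\omega)$. The hypotheses of Lemma~\ref{lem:HamilVar} are met: $B\in\mathfrak{t}^s$ is a Hamiltonian Killing vector field on $\PE$ (generated via Definition~\ref{def:InfAction} by $Id_{E_1},\dots,Id_{E_s}$), and $\phi\in C^\infty_{\bT^s}(\PE,\bR)$ is $\bT^s$-invariant, so $\mathcal{L}_B\phi=0$. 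This produces, with respect to the metric $g_{k,n}$, the contribution
\[
\LichOp\phi + \tfrac{1}{2}\nabla Scal(\omega_{k,n})\cdot\nabla\phi - \tfrac{1}{2}\nabla Q(\omega_{k,n},B)\cdot\nabla\phi,
\]
which are the first three summands in the first line of \eqref{eq:LAEMOLichOpPara}. The remaining partial linearizations are immediate: $-Q(\omega_{k,n},B_\Theta)$ is linear in $\Theta$ by linearity of $B_\Theta$ in $\Theta$ (cf.\ \eqref{eq:BVfieldTheta}) and of $Q$ in its vector-field argument, while $-\oR$ is by definition linear in $\oR$. Summing these recovers the first equality of \eqref{eq:LAEMOLichOpPara}.

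To pass to the second equality, I would invoke Theorem~\ref{ApproxThm}. Equation~\eqref{eq:ExtrEqApproxSol} states that
\[
Scal(\omega_{k,n}) - Q(\omega_{k,n},B) = \oC + \sum_{i=1}^{n+1} c_i k^{-i} + \O(k^{-n-2}),
\]
where $\oC,c_1,\dots,c_{n+1}\in\bR$ are \emph{constants} on $\PE$. Taking the $g_{k,n}$-gradient annihilates every constant term and leaves $\nabla Scal(\omega_{k,n}) - \nabla Q(\omega_{k,n},B) = \O(k^{-n-2})$. Contracting with $\nabla\phi$, the two middle summands in the first line of \eqref{eq:LAEMOLichOpPara} combine into an $\O(k^{-n-2})$ error, yielding the claimed formula.

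The main obstacle is not the algebraic derivation---which is essentially a one-line consequence of the linearization lemmas together with Theorem~\ref{ApproxThm}---but the precise interpretation of the $\O(k^{-n-2})$ symbol as a genuine \emph{operator} estimate on a suitable Sobolev completion of $C^\infty_{\bT^s}(\PE,\bR)$, with constants uniform in $k$. Because $\omega_{k,n}$ degenerates in the adiabatic limit (cf.\ Remark~\ref{rem:SobolevkDep}), controlling the $g_{k,n}$-gradient of $Scal(\omega_{k,n})-Q(\omega_{k,n},B)$ in a $k$-independent norm requires the careful bookkeeping of the construction in Section~\ref{ChapApproxSol}. For the purely formal statement of Lemma~\ref{lem:ParamLin}, however, the pointwise asymptotic extracted from \eqref{eq:ExtrEqApproxSol} is sufficient, with the uniformity left for the implicit-function-theorem arguments that follow.
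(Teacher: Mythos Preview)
Your proposal is correct and follows essentially the same route as the paper: the paper also linearises the $Scal$-part via Lemma~\ref{lem:LinScalNeu}, the $Q(\cdot,B)$-part via Lemma~\ref{lem:HamilVar}, notes linearity in the parameters $(\Theta,\oR)$, and then invokes equation~\eqref{eq:ExtrEqApproxSol} to absorb the two gradient terms into an $\O(k^{-n-2})$ error. Your additional remark about the distinction between the pointwise asymptotic and a uniform operator bound is apt and is indeed deferred in the paper to the later analysis (e.g.\ Lemma~\ref{lem:UnifConv} and the proof of Theorem~\ref{thm:InverseEst}).
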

Later, we will show that the map
\begin{equation}\label{eq:ParamLin}
L_{{\rm AEMO},\omega_{k,n}}^{\Theta, \oR} : L^2_{m+4, \bT^s} \times \bR^{s+1} \to L^2_{m, \bT^s},
\end{equation}
is a Banach space epimorphism for which we can 
construct a right-inverse with suitable bounds.

\begin{remark}\label{rem:ParamChoice}
Because there is only \emph{one} $\bT^s$-action on $\PE$, we know by the theory outlined in Section
\ref{subsec:EKMBackground} that the $s$-tuple of parameters $\Theta$ is determined by
the K\"ahler-class $[\omega_k]$ and the $\bT^s$-action. In particular the extremal vector field---defined in Definition \ref{def:EVF}---is determined by this data, and the variation of $\Theta$ will perturb $Q(\omega_{k,n},B)+Q(\omega_{k,n},B_{\Theta})$ to the Hamiltonian
of the extremal vector field, as we apply the IFT.

If equation \eqref{eq:ExtrEqKaehlerPotPara} is satisfied for the $\bT^s$-invariant K\"ahler metric $\omega_{k,n}+ \idd \phi, \Theta, \oR$, it follows from the calculation in equation \eqref{eq:HamFutakiCalc} that $\oC+\oR$ is the average of $Scal(\omega_{k,n}+ \idd \phi)$
and $Q(\omega_{k,n} + \idd \phi,B)+Q(\omega_{k,n},B_{\Theta})$ is the (mean-value zero) Hamiltonian of the extremal vector field.
\end{remark}

\paragraph{Applying the parameter-dependent implicit function theorem}

\

\

Once we showed that the map $L_{{\rm AEMO},\omega_{k,n}}^{\Theta, \oR} : L^2_{m+4, \bT^s} \times \bR^{s+1} \to L^2_{m, \bT^s}$ is a Banach space epimorphism with bounded right-inverse, applying the implicit function Theorem \ref{IFT} to the map
\[
L^2_{m+4, \bT^s} \times \bR^{s+1} \ni (\phi,\Theta, \oR) \mapsto Scal(\omega_{k,n}+\idd \phi) - Q(\omega_{k,n}+ \idd \phi, B) - Q(\omega_{k,n},B_{\Theta}) - \oC - \oR \in L^2_{m, \bT^s};
\]
we see that if the evaluation of this map at $(0,0,0) \in L^2_{m+4, \bT^s} \times \bR^{s+1}$ for small $\delta_k$ satisfies
\[
\left\| Scal(\omega_{k,n}) - Q(\omega_{k,n}, B)  - \oC \right\|_{L^2_{m, \bT^s} (g_{k,n})}< \delta_k,
\]
then there exist $(\phi,\Theta, \oR) \in L^2_{m+4, \bT^s} \times \bR^{s+1}$ satisfying equation \eqref{eq:ExtrEqKaehlerPotPara}. Hence we can conclude the proof once we have shown that $Scal(\omega_{k,n}) - Q(\omega_{k,n}, B)  - \oC$ converges to zero quicker than $\delta_k$, for suitably chosen $n$.

\subsection{Local analysis}\label{LA}

In this section we will establish Sobolev inequalities, and elliptic estimates for $L_{{\rm AEMO},\omega_{k,n}}^{\Theta, \oR}$, \emph{uniformly} in the adiabatic parameter $k$.
Most results in this section were already proven in \cite[Section~5]{F}, to which we will often refer.

\subsubsection{The local model}

The most important result of this subsection, Proposition \ref{adiabThm}, states that the geometry of the fibres dominates the local geometry of the total space $\PE$ in an adiabatic limit for $k \gg 0$. The local model we use in this section was first constructed in \cite[Section~5.1]{F}, and our construction is an adaptation of it.

Let $B_{{\rm flat}} \subset M$ be a ball in the base manifold $M$ with centre $p_0 \in M$, endowed with the flat K\"ahler metric.
Since this ball is contractible, $\PE|_{B_{{\rm flat}}}$ (the part of $\PE$ over $B_{{\rm flat}}$) is diffeomorphic to $\P^{r-1} \times B_{{\rm flat}}$. The identification $\PE|_{B_{{\rm flat}}} \cong \P^{r-1} \times B_{{\rm flat}}$ can be arranged, such that the horizontal distribution on the (central) fibre $\mathbb{P}^{r-1}_{(p_0)}$
coincides with the restriction of the $T B_{{\rm flat}}$-summand to $\mathbb{P}^{r-1}_{(p_0)}$ in the splitting 
\begin{equation}\label{TPEsplitting}
T \left( \PE |_{B_{{\rm flat}}} \right) \cong T(\mathbb{P}^{r-1} \times B_{{\rm flat}}) \cong T\mathbb{P}^{r-1} \oplus T B_{{\rm flat}}.
\end{equation}
For every $k$, two K\"ahler structures on $\mathbb{P}^{r-1} \times B_{{\rm flat}}$ will be
of interest: the first one is simply the restriction of the K\"ahler structure
$(\PE,J,\omega_{k,n})$ to $\PE|_{B_{{\rm flat}}}$. 

The second K\"ahler structure of interest is the product structure $(J',\omega'_k)$, 
scaled by $k$ in the $B_{{\rm flat}}$-direction.
With respect to the splitting \nolinebreak \eqref{TPEsplitting}, we have
\begin{align*}
J' & = J_{\mathbb{P}^{r-1}} \oplus J_{B_{{\rm flat}}},\\
\omega'_k & = \omega_{\mathbb{P}^{r-1}} \oplus k \omega_{B_{{\rm flat}}},
\end{align*}
where $\omega_{B_{{\rm flat}}}$ is the flat K\"ahler form on $B_{{\rm flat}}$ agreeing with $\omega_M$
at $p_0 \in M$, $J_{B_{{\rm flat}}}$ is the complex structure on $B_{{\rm flat}}$, and $J_{\mathbb{P}^{r-1}},\omega_{\mathbb{P}^{r-1}}$ are the complex
structure and (Fubini-Study) K\"ahler form on the (central) fibre $\mathbb{P}^{r-1}_{(p_0)}$.
The corresponding product metric induced by $(J',\omega'_k)$ on $\mathbb{P}^{r-1} \times B_{{\rm flat}}$ will be denoted by $g'_k$. Observe, since the fibration $\PE \to M$ is locally holomorphically trivial, the complex structure $J$ induced on $\PE|_{B_{{\rm flat}}}$ by restricting the K\"ahler structure $(\PE,J,\omega_{k,n})$ to $\PE|_{B_{{\rm flat}}}$,
and the complex structure $J'$ induced by the product K\"ahler structure coincide over $B_{{\rm flat}}$, 
i.e. $J'|_{B_{{\rm flat}}}=J|_{B_{{\rm flat}}}$.

Later on, we will need the following lemma. 
\begin{lemma}[cf. Lemma 5.1 in \cite{F}]\label{lem:scaling1}
Let $\alpha \in C^m(T^* \PE^{\otimes i})$. Over $\PE|_{B_{{\rm flat}}}$, $\| \alpha \|_{C^m
(g'_k)}= \mathcal{O}(1)$. If $\alpha$ is pulled up from the base, we have
$\| \alpha \|_{C^m (g'_k)}= \mathcal{O} \left( k^{-i/2} \right)$.
\end{lemma}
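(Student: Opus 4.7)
The plan is to exploit the product structure of $g'_k = g_{\mathbb{P}^{r-1}} \oplus k g_{B_{{\rm flat}}}$ and track how each index of $\alpha$ contributes to the norm under rescaling in the base direction.

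First, I would record the two facts driving everything. (a) Because $g'_k$ is a product metric and rescaling a Riemannian metric by a positive constant leaves its Levi-Civita connection unchanged, the connection $\nabla^{g'_k}$ equals the direct-sum connection $\nabla^{g_{\mathbb{P}^{r-1}}} \oplus \nabla^{g_{B_{{\rm flat}}}}$, \emph{independently of $k$}. (b) The dual metric on $T^*$ scales inversely: on covectors annihilating $TB_{{\rm flat}}$ (``vertical'' covectors) the norm is $|\cdot|_{g_{\mathbb{P}^{r-1}}}$, while on covectors annihilating $T\mathbb{P}^{r-1}$ (``horizontal'' covectors) the norm equals $k^{-1/2}|\cdot|_{g_{B_{{\rm flat}}}}$.

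Next, I would prove the first claim by decomposing $\alpha$ in a local frame adapted to $T\mathbb{P}^{r-1} \oplus TB_{{\rm flat}}$, so that each summand of $\alpha$ has $a$ horizontal and $i-a$ vertical covector indices for some $0 \le a \le i$. By (b), the pointwise $g'_k$-norm of such a summand is $O(k^{-a/2})$, which is at worst $O(1)$ (when $a=0$). Summing the finitely many components gives $|\alpha|_{g'_k} = O(1)$ on the compact closure of $\PE|_{B_{{\rm flat}}}$. For the $C^m$ part, I would invoke (a): the components of $\nabla^{(j)} \alpha$ in a fixed product coordinate system are the same as those computed using $\nabla^{g_{\mathbb{P}^{r-1}}} \oplus \nabla^{g_{B_{{\rm flat}}}}$, hence are bounded functions on $\PE|_{B_{{\rm flat}}}$ independent of $k$. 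Each derivative adds one more covector index—vertical or horizontal—so the worst-case scaling is again $O(1)$, and $\|\alpha\|_{C^m(g'_k)} = O(1)$ follows.

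For the second claim, I would use that if $\alpha$ is pulled up from the base then, viewed as a section of $(T^*\PE)^{\otimes i}$, all its indices are purely horizontal, i.e.\ $a = i$. By (b) this forces $|\alpha|_{g'_k} = O(k^{-i/2})$ pointwise. The key observation is then that $\nabla^{g'_k}$ preserves the property of being pulled up: because the product connection decouples and $\alpha$ is constant along fibers, $\nabla^{g'_k}_V \alpha = 0$ for any vertical $V$, and $\nabla^{g'_k}_H \alpha$ for horizontal $H$ coincides with the pull-up of $\nabla^{g_{B_{{\rm flat}}}}_H \alpha$. Iterating, each $\nabla^{(j)}\alpha$ is again a pulled-up tensor, with $i+j$ purely horizontal indices, so its $g'_k$-norm is $O(k^{-(i+j)/2}) \le O(k^{-i/2})$, yielding $\|\alpha\|_{C^m(g'_k)} = O(k^{-i/2})$.

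The only step requiring any care is the verification that $\nabla^{g'_k}$ really is the unrescaled product connection and that pulled-up tensors remain pulled-up under covariant differentiation; both are immediate once the product structure is isolated. No genuine analytic obstacle arises—this is purely a bookkeeping lemma that sets up the scaling conventions used in the Sobolev and elliptic estimates to follow.
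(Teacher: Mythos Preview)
Your argument is correct and is precisely the standard scaling computation one expects here: the Levi--Civita connection of $g'_k$ is the $k$-independent product connection, and each horizontal covector index contributes a factor of $k^{-1/2}$. The paper does not give its own proof but simply defers to \cite[Lemma~5.1]{F}, whose proof is essentially what you have written.
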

The proof of the lemma is the same as the one of \cite[Lemma~5.1]{F}. 
The main result of this subsection, which is the analogue of \cite[Theorem~5.2]{F}, is
\begin{proposition}\label{adiabThm}
For all $\varepsilon >0,\,p_0 \in M$, there exists a ball
$B_{{\rm flat}} \subset M$, centred at $p_0$, such that for all sufficiently
large $k$, over $\PE|_{B_{{\rm flat}}}$ we have
\[\| (J',\omega'_k)-(J,\omega_{k,n}) \|_{C^m(g'_k)} < \varepsilon.\]
\end{proposition}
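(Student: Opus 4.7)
The plan is to follow closely the strategy of \cite[Theorem~5.2]{F} to which this proposition is the direct analogue. Fix $p_0\in M$ and $\varepsilon>0$; I would choose a ball $B_{\mathrm{flat}}$ of radius $\rho$ around $p_0$, with $\rho$ and the threshold $k_0$ to be determined. On $B_{\mathrm{flat}}$ I would pick K\"ahler normal coordinates for $\omega_M$ (so $\omega_M-\omega_{B_{\mathrm{flat}}}$ vanishes to second order at $p_0$) together with a local holomorphic trivialisation of $E$ in which the Hermitian metric $h$ is normal at $p_0$ (so the Chern connection 1-form $A$ of $\nabla_h$ vanishes at $p_0$ and is $O(\rho)$ in $C^m$ on $B_{\mathrm{flat}}$). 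This trivialisation is the one described in the statement of the proposition: the horizontal distribution of $\nabla^{L^*}$ over the central fibre then coincides with $TB_{\mathrm{flat}}|_{\mathbb{P}^{r-1}_{(p_0)}}$. As the paper already records, this local holomorphic trivialisation also forces $J=J'$ over $B_{\mathrm{flat}}$, so it suffices to bound $\omega_{k,n}-\omega'_k$ in $C^m(g'_k)$.

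I would split the comparison into a ``baseline'' part and a ``correction'' part,
\[
\omega_{k,n}-\omega'_k\;=\;\bigl(\omega_k-\omega'_k\bigr)\;+\;\idd\Psi_{k,n},\qquad \Psi_{k,n}:=\sum_{i=1}^n\bigl(k^{-i}\phi_{i,\Enu}+k^{-i-1}\phi_{i,R}+k^{-i+1}\phi_{i,\Cinf(M)}\bigr).
\]
For the baseline piece, Proposition \ref{FPProp} gives $\omega_k=\omega_{FS}(h)\oplus 0\oplus\mu^*(F^{\nabla_h})+k\pi^*\omega_M$, whence
\[
\omega_k-\omega'_k\;=\;\bigl(\omega_{FS}(h)-\omega_{\mathbb{P}^{r-1}}\bigr)\;+\;\mu^*(F^{\nabla_h})\;+\;k\bigl(\pi^*\omega_M-\omega_{B_{\mathrm{flat}}}\bigr).
\]
The first term is a vertical 2-form which, because $h$ is normal at $p_0$, agrees with the model Fubini-Study form along the central fibre; smooth dependence on the base then gives a $C^m(g'_k)$ bound of $O(\rho)$. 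The second term is a purely horizontal, base-pulled-back 2-form, so Lemma \ref{lem:scaling1} yields $O(k^{-1})$ in $C^m(g'_k)$. The third term is $k$ times a pulled-back 2-form whose $C^m$ norm on the base is $O(\rho^2)$, so Lemma \ref{lem:scaling1} gives $k\cdot k^{-1}\cdot O(\rho^2)=O(\rho^2)$ in $C^m(g'_k)$.

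For the correction piece, each $\phi_{i,\Enu},\phi_{i,R}$ is a fixed smooth function on $\PE$, bounded with all derivatives uniformly in $k$, so by Lemma \ref{lem:scaling1} the contributions $k^{-i}\idd\phi_{i,\Enu}$ and $k^{-i-1}\idd\phi_{i,R}$ are $O(k^{-i})$ and $O(k^{-i-1})$ in $C^m(g'_k)$ respectively. The potentials $\phi_{i,\Cinf(M)}$ are pulled back from the base, so $\idd\phi_{i,\Cinf(M)}$ gains an extra factor $k^{-1}$ by Lemma \ref{lem:scaling1}; multiplied by the explicit prefactor $k^{-i+1}$ with $i\geq 1$, these also give $O(k^{-i})$. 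Summing, $\idd\Psi_{k,n}=O(k^{-1})$ in $C^m(g'_k)$. Combining with the baseline estimate gives
\[
\|\omega_{k,n}-\omega'_k\|_{C^m(g'_k)}\;\leq\;C_1\rho+C_2 k^{-1},
\]
with $C_1,C_2$ independent of $k$ (and, for $C_1$, depending only on finitely many derivatives of $h,\omega_M$ at $p_0$). Choosing $\rho$ small so that $C_1\rho<\varepsilon/2$ and then $k$ large so that $C_2 k^{-1}<\varepsilon/2$ completes the proof. The main delicate point is the vertical term $\omega_{FS}(h)-\omega_{\mathbb{P}^{r-1}}$: being vertical it does \emph{not} gain the favourable $k^{-1}$ factor of Lemma \ref{lem:scaling1}, so its smallness must be extracted purely from shrinking $\rho$, using the normality of $h$ at $p_0$; this is why one must fix $\rho$ before taking $k$ large, rather than the other way around.
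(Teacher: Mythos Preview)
Your proposal is essentially correct and follows exactly the route the paper indicates: the paper's own ``proof'' simply refers to \cite[Theorem~5.2]{F} and notes that the argument is easier here because the fibration is locally holomorphically trivial, so $J'=J$ over $B_{\mathrm{flat}}$ and only $\omega_{k,n}-\omega'_k$ needs to be estimated. Your decomposition into the baseline $\omega_k-\omega'_k$ and the correction $\idd\Psi_{k,n}$, together with the use of Lemma~\ref{lem:scaling1}, is precisely Fine's argument specialised to this setting, and in fact supplies more detail than the paper itself.

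One small imprecision worth flagging: you call $\mu^*(F^{\nabla_h})$ a ``base-pulled-back'' 2-form, but it is not---its coefficients depend on the fibre coordinate through $\mu^*$. It is, however, \emph{purely horizontal}, and that is what gives the $O(k^{-1})$ bound in $C^m(g'_k)$: the pointwise norm of a horizontal 2-form scales as $k^{-1}$, and vertical derivatives of the coefficients do not spoil this. So your conclusion stands, only the justification needs rewording.
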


The proof of the proposition is similar to the proof of \cite[Theorem~5.2]{F}, and we refer to this reference for the details; in fact, the proof in our case is easier since we just have to deal with a holomorphically trivial fibration $\PE \to M$, so $J'|_{B_{{\rm flat}}}=J|_{B_{{\rm flat}}}$, whereas \cite{F} considers Kodaira fibrations, the fibres of which have non-trivial moduli.

\subsubsection{Analysis in the local model}\label{subsec:AnaLocModel}

This section contains analytic results on the product model $(\mathbb{P}^{r-1}
\times B_{{\rm flat}},J',\omega'_k)$, needed in the sequel.

The proofs of the following results won't be reproduced, since they can
be taken over (almost) verbatim from the book \cite[Chapter~3]{D4}, or from \cite[Section~5.2]{F}.
\begin{lemma}[cf. Lemma 5.3 in \cite{F}]\label{SobolInequ}
For indices $m,l$ and $q \geq p$ satisfying $m -\dim_{\mathbb{R}} (\PE)/p
\geq l- \dim_{\mathbb{R}} (\PE)/q$, there is a constant $c$ (depending \emph{only}
on $m,l,q$ and $p$) such that for all $\phi \in L^p_{m,\bT^s} \left( \mathbb{P}^{r-1}
\times B_{{\rm flat}} \right)$,
\[ 
\| \phi \|_{L^q_{l,\bT^s} (g'_k)} \leq c \| \phi \|_{L^p_{m,\bT^s} (g'_k)}.
\]
For indices $p,m$ satisfying $m-\dim_{\mathbb{R}} (\PE)/p>0$, there exists
a constant $c$ (depending \emph{only} on $p,m$), such that for all $\phi
\in L^p_{m,\bT^s} \left( \mathbb{P}^{r-1} \times B_{{\rm flat}} \right)$,
\[ 
\| \phi \|_{C^0_{\bT^s}} \leq c \| \phi \|_{L^p_{m,\bT^s} (g'_k)},
\]
where $g'_k$ is the scaled product metric, from Proposition \ref{adiabThm}, on $\mathbb{P}^{r-1}
\times B_{{\rm flat}}$.
\end{lemma}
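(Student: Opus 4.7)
The plan is to reduce the claim to a standard Sobolev inequality on a product manifold of bounded geometry, via a rescaling that ``undoes'' the adiabatic parameter $k$. Consider the diffeomorphism
\[
\Psi_k : \mathbb{P}^{r-1} \times \sqrt{k}\, B_{\rm flat} \to \mathbb{P}^{r-1} \times B_{\rm flat}, \qquad (x,\tilde y) \mapsto (x,\tilde y/\sqrt{k}).
\]
A direct computation in flat base coordinates gives $\Psi_k^* g'_k = g_{\mathbb{P}^{r-1}} \oplus g_{\mathbb{R}^{2n}} =: g'$, so $\Psi_k$ is an isometry from the \emph{fixed} (i.e.\ $k$-independent) standard product metric $g'$ on the source onto $g'_k$ on the target. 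Moreover $\Psi_k$ commutes with the fibrewise $\bT^s$-action (which touches only the $\mathbb{P}^{r-1}$-factor), hence it identifies $\bT^s$-invariant Sobolev spaces with respect to $g'_k$ with their pulled-back counterparts with respect to $g'$, norms included.

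After this rescaling, the inequality reduces to showing
\[
\|\tilde\phi\|_{L^q_{l,\bT^s}(g')(\mathbb{P}^{r-1} \times \sqrt{k}\, B_{\rm flat})} \leq c \, \|\tilde\phi\|_{L^p_{m,\bT^s}(g')(\mathbb{P}^{r-1} \times \sqrt{k}\, B_{\rm flat})}
\]
with a constant $c$ that does \emph{not} depend on the radius $\sqrt{k}\cdot\mathrm{rad}(B_{\rm flat})$ of the Euclidean factor. I would establish this via a covering / partition-of-unity argument: the ambient product $\mathbb{P}^{r-1} \times \mathbb{R}^{2n}$ is a complete Riemannian manifold of bounded geometry (a product of a compact Kähler manifold with flat space), so the classical Sobolev embedding holds on unit geodesic balls with a constant depending only on $p,q,l,m$ and on $\mathbb{P}^{r-1}$. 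Covering $\mathbb{P}^{r-1} \times \sqrt{k}\,B_{\rm flat}$ by unit $g'$-balls of uniformly bounded multiplicity, summing the local estimates, and choosing the partition of unity to be $\bT^s$-invariant (e.g.\ by averaging over the compact torus) yields the claimed inequality. The $C^0$ bound follows in the same way from Morrey's embedding, with the sup-norm preserved exactly by $\Psi_k$ (no Jacobian factor).

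The main obstacle is not conceptual but rather a careful bookkeeping of the local constants: one must verify that the Sobolev constant on each unit ball, the number of balls meeting a given point, and the norm of the partition of unity are all genuinely independent of $k$. This is precisely what the rescaling by $\Psi_k$ buys us, since after the pullback the metric is fixed and only the Euclidean radius grows---exactly the regime where bounded-geometry Sobolev estimates are uniform. The argument is a direct adaptation of \cite[Lemma~5.3]{F} to the $\bT^s$-equivariant setting, and goes through verbatim once one observes that every step of Fine's proof is compatible with the fibrewise $\bT^s$-action.
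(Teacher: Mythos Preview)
Your proposal is correct and follows exactly the approach the paper has in mind: the paper does not reproduce a proof of this lemma but simply refers to \cite[Chapter~3]{D4} and \cite[Section~5.2]{F}, where the argument is precisely the rescaling-plus-bounded-geometry covering that you outline. The only addition in your write-up is the observation that every step is compatible with the fibrewise $\bT^s$-action, which is immediate since restricting to a closed invariant subspace preserves the Sobolev inequalities (cf.\ Remark~\ref{rem:TsInvarFunc}).
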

\begin{remark}\label{rem:TsInvarFunc}
Even though the result above, and also several results below (Lemmas (\ref{EllipEst}--\ref{lem:UnifEllipEst})), are proven in \cite{D4,F} for general Sobolev spaces, restricting to $\bT^s$-invariant functions in these spaces doesn't cause problems and the proofs are the same.
\end{remark}
The product K\"ahler structure $(J',\omega'_k)$ on $\mathbb{P}^{r-1}
\times B_{{\rm flat}}$, as defined in Proposition \ref{adiabThm}, determines a ``\emph{product extremal metric operator}'' on ($\bT^s$-invariant) K\"ahler potentials
\begin{equation}\label{eq:ProdMap}
\phi \mapsto Scal(\omega'_k +i \dbar \partial \phi) - Q(\omega'_k +i \dbar
\partial \phi,B) - \oC,
\end{equation}
with the Hamiltonian
\[
Q(\omega'_k +i \dbar \partial \phi,B),
\]
which is the analogue of $Q(\omega_{k,n}+i \dbar \partial \phi,B)$ for the product structure $(J',\omega'_k)$; i.e. $Q(\omega'_k,B)$ is the Hamiltonian for $B$ with respect to $\omega_{\mathbb{P}^{r-1}}$---the metric on the first factor of the product (where the vector field $B$ is induced on $\mathbb{P}^{r-1}$ as in Definition \ref{def:InfAction}).
We denote the linearisation of the map \eqref{eq:ProdMap} at $\omega'_k$
by $L_{{\rm AEMO},\omega'_k}(\phi): L^2_{m+4,\bT^s} (g'_k) \to 
L^2_{m,\bT^s} (g'_k)$.
Using the results from Chapter 3 of \cite{D4}, or \cite[Section~5.2]{F}, gives the following
elliptic estimate for $L_{{\rm AEMO},\omega'_k}(\phi)$. 
(Indeed, the estimates presented in Chapter 3
of \cite{D4} are valid for any elliptic operator determined by the local
geometry of the underlying manifold.)
\begin{lemma}[cf. Lemma 5.4 in \cite{F}]\label{EllipEst}
There exists a constant $C$ such that for all $\phi \in L^2_{m+4,\bT^s}
(\mathbb{P}^{r-1} \times B_{{\rm flat}})$,
\[ 
\| \phi \|_{L^2_{m+4,\bT^s} (g'_k)} \leq C \left( \| \phi  \|_{L^2_{\bT^s} (g'_k)}+
\| L_{{\rm AEMO},\omega'_k}(\phi) \|_{L^2_{m,\bT^s} (g'_k)}
 \right).
\]
\end{lemma}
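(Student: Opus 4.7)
The strategy is to observe that $L_{{\rm AEMO},\omega'_k}$ is a fourth-order linear elliptic differential operator whose principal part is the Lichnerowicz operator $\LichOp_{g'_k}$ of the product metric $g'_k$, and then to invoke the standard Garding-type estimate for such operators presented in \cite[Chapter~3]{D4}, taking care that the constants are uniform in the adiabatic parameter $k$.

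First, by Lemma~\ref{lem:LinExNeu}, the linearisation at $\omega'_k$ reads
\[
L_{{\rm AEMO},\omega'_k}(\phi) = \LichOp_{g'_k}(\phi) + \tfrac{1}{2}\nabla Scal(\omega'_k)\cdot\nabla\phi - \tfrac{1}{2}\nabla Q(\omega'_k,B)\cdot\nabla\phi.
\]
Since $\omega'_k$ is the K\"ahler product of the (cscK) Fubini--Study metric on $\mathbb{P}^{r-1}$ with the flat metric $k\omega_{B_{{\rm flat}}}$, its scalar curvature is in fact the constant $Scal(\omega_{FS})$, so the middle term drops out. The remaining first-order correction is controlled uniformly in $k$, because $Q(\omega'_k,B)$ is a Hamiltonian for $B$ with respect to the fixed fibrewise Fubini-Study metric and hence its gradient (measured in $g'_k$) is bounded uniformly in $k$ by Lemma~\ref{lem:scaling1}.

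Second, apply the standard interior Garding inequality for $\LichOp_{g'_k}$. The principal symbol of $\LichOp_{g'_k}$, when cotangent vectors are measured in the $g'_k$-norm, is simply $\tfrac{1}{4}|\xi|^4_{g'_k}$; thus $\LichOp_{g'_k}$ is uniformly elliptic \emph{in the Sobolev spaces defined by $g'_k$}. The usual freezing-coefficient argument then produces the estimate
\[
\|\phi\|_{L^2_{m+4}(g'_k)} \leq C\bigl(\|\phi\|_{L^2(g'_k)} + \|\LichOp_{g'_k}\phi\|_{L^2_m(g'_k)}\bigr),
\]
with $C$ depending only on the fixed fibre geometry and on uniform bounds for the (trivially bounded) geometry of the rescaled base $(B_{{\rm flat}},k\omega_{B_{{\rm flat}}})$. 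The first-order correction $-\tfrac{1}{2}\nabla Q(\omega'_k,B)\cdot\nabla\phi$ is absorbed on the right via the standard interpolation $\|\phi\|_{L^2_{m+3}(g'_k)} \leq \epsilon\|\phi\|_{L^2_{m+4}(g'_k)} + C_\epsilon\|\phi\|_{L^2(g'_k)}$. The restriction to $\bT^s$-invariant functions is harmless since $L_{{\rm AEMO},\omega'_k}$ is $\bT^s$-equivariant and $L^2_{m,\bT^s}(g'_k)\subset L^2_m(g'_k)$ is closed (cf.\ Remark~\ref{rem:TsInvarFunc}).

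The main obstacle is the $k$-uniformity of $C$. Naively, $g'_k$ degenerates along the base as $k\to\infty$, so one might fear that the coefficients of $\LichOp_{g'_k}$ in local coordinates become singular. The key point is that the coefficients are computed in the \emph{same} metric $g'_k$ that defines the Sobolev norms: after a $k$-dependent rescaling of the base coordinates the metric $g'_k$ becomes a fixed product metric on $\mathbb{P}^{r-1}\times \widetilde{B}$ for some $k$-independent Euclidean ball $\widetilde{B}$, and in these coordinates Donaldson's freezing-coefficient argument runs with $k$-independent constants. Combined with Lemma~\ref{lem:scaling1} for the first-order correction, this yields the desired estimate with $C$ independent of $k$.
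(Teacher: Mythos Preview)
Your proposal is correct and matches the paper's approach: the paper does not reproduce a proof but simply refers to \cite[Chapter~3]{D4} and \cite[Section~5.2]{F}, noting that ``the estimates presented in Chapter~3 of \cite{D4} are valid for any elliptic operator determined by the local geometry of the underlying manifold.'' Your argument supplies precisely the details behind that citation---the decomposition via Lemma~\ref{lem:LinExNeu}, the observation that $Scal(\omega'_k)$ is constant, and above all the rescaling of base coordinates that renders $g'_k$ a fixed product metric so that the G{\aa}rding constants become $k$-independent---which is exactly the mechanism underlying the cited results.
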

Later on when carrying out the patching arguments to transform those results
from the product to the total space of $\PE \to M$, we will also need
\begin{lemma}[cf. Lemma 5.5 in \cite{F}]\label{lem:patch}
There exists a constant $P$, such that for all compactly supported
$u \in C^{m+4}_{\bT^s} (B_{{\rm flat}})$, and all $\phi \in L^p_{m+4,\bT^s}
\left( \mathbb{P}^{r-1} \times B_{{\rm flat}} \right)$,
\[
\| L_{{\rm AEMO},\omega'_{k}}(u\phi)-
u \, L_{{\rm AEMO},\omega'_{k}}(\phi)\|_{L^p_{m,\bT^s} (g'_{k})} \leq P \sum_{j=1}^{m+4}
\|\nabla^j u\|_{C^0_{\bT^s} (g'_{k})} \| \phi \|_{L^p_{m+4,\bT^s} (g'_{k})}.
\]
\end{lemma}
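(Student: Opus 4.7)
The plan is to expand the commutator via the Leibniz rule and then bound each resulting term by H\"older's inequality. The key point throughout is that the constant $P$ be independent of the adiabatic parameter $k$, which ultimately rests on uniform control of the coefficients of $L_{{\rm AEMO},\omega'_k}$ on the local product model.

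First I would verify that in any fixed coordinate system adapted to the product decomposition $\P^{r-1}\times B_{\rm flat}$, the operator $L_{{\rm AEMO},\omega'_k}$ may be written locally as $\sum_{|\gamma|\leq 4} a_\gamma\,\partial^\gamma$, and that the coefficients $a_\gamma$ together with their derivatives up to order $m$, measured in $C^0(g'_k)$, are bounded uniformly in $k$. This follows from the fact that $L_{{\rm AEMO},\omega'_k}$ is built out of the Lichnerowicz operator, the scalar curvature of $\omega'_k$, and the Hamiltonian $Q(\omega'_k,B)$ (by Lemmas \ref{lem:LinScalNeu} and \ref{lem:HamilVar}), together with the observation that in the product metric $g'_k = \omega_{\P^{r-1}}\oplus k\omega_{B_{\rm flat}}$ the fibre factor is $k$-independent and the $k$-dependence of the base factor enters only through inverse-metric contractions whose factors of $k^{-1}$ are exactly cancelled by the $g'_k$-rescaling of covariant derivatives -- the same mechanism underlying Lemma \ref{lem:scaling1}.

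Given uniform bounds on the $a_\gamma$, the Leibniz rule yields
\[
L_{{\rm AEMO},\omega'_k}(u\phi) - u\,L_{{\rm AEMO},\omega'_k}(\phi) = \sum_{|\gamma|\leq 4}\sum_{0 < \alpha \leq \gamma}\binom{\gamma}{\alpha}\, a_\gamma\,\partial^\alpha u\cdot\partial^{\gamma-\alpha}\phi,
\]
identifying the commutator as a differential operator of order at most three in $\phi$, each term carrying at least one derivative on $u$. Taking a further $m'$ covariant derivatives with $0\leq m'\leq m$ and applying Leibniz once more produces a finite sum of terms of the form $b(x)\,\nabla^{\alpha'}u\cdot\nabla^{\beta'}\phi$ with $|\alpha'|\geq 1$, $|\beta'|\leq m+3$, and $|\alpha'|+|\beta'|\leq m+4$, where each $b(x)$ is a universal polynomial in the $a_\gamma$ and their derivatives up to order $m$, hence uniformly bounded in $C^0(g'_k)$. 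H\"older's inequality then gives termwise
\[
\bigl\|b\cdot\nabla^{\alpha'}u\cdot\nabla^{\beta'}\phi\bigr\|_{L^p(g'_k)} \leq \|b\|_{C^0(g'_k)}\,\|\nabla^{\alpha'}u\|_{C^0(g'_k)}\,\|\nabla^{\beta'}\phi\|_{L^p(g'_k)},
\]
and summing over the finitely many contributing multi-indices, while using that $\|\nabla^{\beta'}\phi\|_{L^p(g'_k)}\leq \|\phi\|_{L^p_{m+4}(g'_k)}$ for $|\beta'|\leq m+4$, yields the desired inequality with a constant $P$ depending only on $m$, $p$, the complex dimensions, and the fixed geometry of $\P^{r-1}$ and $B_{\rm flat}$. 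The $\bT^s$-invariance is automatic, since $L_{{\rm AEMO},\omega'_k}$ arises from $\bT^s$-invariant data and $u$, being pulled back from $B_{\rm flat}$, is trivially $\bT^s$-invariant.

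The main obstacle is the first step: a careful verification that the coefficients of $L_{{\rm AEMO},\omega'_k}$ and their derivatives are bounded in $C^0(g'_k)$ uniformly in $k$. Once this structural fact is secured, the rest of the proof reduces to routine multi-index bookkeeping with the Leibniz and H\"older inequalities, and is essentially identical to the argument given in \cite[Lemma~5.5]{F} for the Kodaira-fibration setting.
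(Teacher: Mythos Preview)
Your proposal is correct and follows the standard Leibniz--H\"older argument for commutator estimates of this type. The paper itself does not give a proof of this lemma but simply refers the reader to \cite[Lemma~5.5]{F} and \cite[Chapter~3]{D4}, where precisely this argument is carried out; your sketch is essentially what one finds in those references, adapted to the operator $L_{{\rm AEMO},\omega'_k}$.
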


\subsubsection{Local analysis and patching arguments}

This section will show how to convert results from the product 
$(\mathbb{P}^{r-1} \times B_{{\rm flat}},J',\omega'_{k})$ to uniform results
over $(\PE,J,\omega_{k,n})$, and corresponds to \cite[Section~5.3]{F}. Applying Proposition \ref{adiabThm}
with $\varepsilon < 1$, we obtain that over $\PE|_{B_{{\rm flat}}}$, the difference $g_k-g'_k$ is uniformly
bounded in the space $C^m(g'_k)$. This choice of $\varepsilon$ ensures that the metrics
are sufficiently close, so that the difference $g^{T^* \PE}-g'^{T^* \PE}$ of
the induced metrics on the cotangent bundle is also uniformly bounded.

Hence the Banach space norms on tensors determined by $g_k$ and $g'_k$ are
uniformly equivalent, i.e.
\[l \| t \|_{C^m(g_k)} \leq \| t \|_{C^m(g'_k)} \leq L \| t \|_{C^m(g_k)},\]
for some tensor $t$ and fixed, positive constants $l,L$. From this we
get
\begin{lemma}[cf. Lemma 5.6 in \cite{F}]\label{lem:tensorpull}
For a tensor $\alpha \in C^m (T^* \PE^{\otimes i}), \, 
\| \alpha \|_{C^m(g_k)}=\mathcal{O}(1)$. Additionally,
if $\alpha$ is pulled up from the base, we have $\| \alpha \|_{C^m(g_k)}
=\mathcal{O}(k^{-i/2})$.
\end{lemma}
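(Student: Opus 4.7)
The plan is to reduce the lemma to its product-model counterpart, Lemma \ref{lem:scaling1}, via the uniform norm equivalence established just before the statement. The whole argument is essentially a patching argument based on compactness of $M$, and the only real work is to explain why each ingredient pulls through to give $k$-uniform constants.

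First, I would pick any $p_0 \in M$ and, using Proposition \ref{adiabThm} applied with some fixed $\varepsilon < 1$, choose a flat ball $B_{\rm flat} \subset M$ around $p_0$ such that, over $\PE|_{B_{\rm flat}}$ and for all $k$ sufficiently large, one has $\| (J',\omega'_k) - (J,\omega_{k,n})\|_{C^m(g'_k)} < \varepsilon$. As already pointed out in the text preceding the lemma, this smallness forces the induced metrics on $T^*\PE^{\otimes i}$ coming from $g_k$ and $g'_k$ to be uniformly equivalent over $\PE|_{B_{\rm flat}}$, with constants independent of $k$. In particular, for any tensor $\alpha$ defined over $\PE|_{B_{\rm flat}}$ one has $l\,\|\alpha\|_{C^m(g_k)} \leq \|\alpha\|_{C^m(g'_k)} \leq L\,\|\alpha\|_{C^m(g_k)}$ with $k$-independent $l,L>0$.

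Next, I would invoke Lemma \ref{lem:scaling1}. On the local product $(\mathbb{P}^{r-1}\times B_{\rm flat}, g'_k)$ it gives $\|\alpha\|_{C^m(g'_k)} = \mathcal{O}(1)$ for any $\alpha\in C^m(T^*\PE^{\otimes i})$, and $\|\alpha\|_{C^m(g'_k)} = \mathcal{O}(k^{-i/2})$ when $\alpha$ is the pullback of a tensor from the base (each of the $i$ cotangent slots coming from the base picks up a factor $k^{-1/2}$ because the base direction is scaled by $k$ in $\omega'_k$). Combining with the $k$-uniform equivalence above immediately yields the same two bounds for $\|\alpha\|_{C^m(g_k)}$ over $\PE|_{B_{\rm flat}}$.

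Finally, I would globalise by a standard patching argument. Since $M$ is compact, cover it by finitely many balls $B_{{\rm flat},1}, \dots, B_{{\rm flat},N}$ of the type above, so that $\PE$ is covered by the corresponding pieces $\PE|_{B_{{\rm flat},j}}$. On each piece we have the local bound from the previous step, with uniform constants. Taking the maximum over the finite cover gives the global $C^m(g_k)$-bound, first $\mathcal{O}(1)$ in general and $\mathcal{O}(k^{-i/2})$ when $\alpha$ is pulled up from the base.

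The only point requiring care — and what I would consider the main obstacle if anything — is verifying that the norm equivalence between $g_k$ and $g'_k$ extends to full $C^m$-norms (not just to $C^0$-norms on tensors) with $k$-independent constants. This uses that $\varepsilon$ in Proposition \ref{adiabThm} controls all derivatives up to order $m$ in $C^m(g'_k)$, so that the difference of Levi-Civita connections and all their iterated derivatives remain $\mathcal{O}(1)$ in $g'_k$; combined with the $g_k$–$g'_k$ equivalence on tensors, this transfers to $g_k$ as well. Once this is in hand, no additional estimates are needed beyond quoting Lemma \ref{lem:scaling1} and finiteness of the cover.
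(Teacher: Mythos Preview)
Your proof is correct and follows essentially the same route as the paper's own proof: invoke Lemma \ref{lem:scaling1} on the local product model, transfer to $g_k$ via the uniform $C^m$-equivalence of $g_k$ and $g'_k$ on $\PE|_{B_{\rm flat}}$ (with Proposition \ref{adiabThm} applied at some fixed $\varepsilon<1$), and then patch over a finite cover of $M$. Your additional paragraph on why the $C^m$-norm equivalence (not just $C^0$) holds with $k$-independent constants is a welcome clarification that the paper simply takes for granted.
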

\begin{proof} 
The same argument as in the proof of \cite[Lemma~5.6]{F} applies, which we shall repeat for the reader's convenience.
By Lemma \ref{lem:scaling1}, the statement is true for the local product model. 
Let again $B_{{\rm flat}} \subset M$ be a ball over which Proposition \ref{adiabThm}
holds with $\varepsilon = 1/2$, for example. 

Since the two norms $\| \cdot \|_{C^m(g_k)}$ and $\| \cdot \|_{C^m(g'_k)}$
are uniformly equivalent over $\PE|_{B_{{\rm flat}}}$,
the result holds in the function space $C^m(g_k)$ over $\PE|_{B_{{\rm flat}}}$. Cover
$M$ with finitely many such balls $B_{{\rm flat},i}$. The result holds in $C^m(g_k)$ 
over each $\PE|_{B_{{\rm flat},i}}$ and so over all of $\PE$ by adding.
\end{proof}
The next lemma gives us a \emph{convergence} result in the function
spaces $C^m_{\bT^s} (g_k), L^2_{\bT^s} (g_k)$ needed later in order to apply the implicit function theorem. 
N.B.: up to now we only established \emph{pointwise convergence}
for the formal solution constructed in Section \ref{ChapApproxSol}.

\begin{lemma}[cf. Lemma 5.7 in \cite{F}]\label{lem:UnifConv}
We have
\begin{eqnarray*}
Scal(\omega_{k,n}) - Q (\omega_{k,n}) - \oC & = & \mathcal{O}(k^{-n-2})
\ {\rm in} \ C^m_{\bT^s}(g_k) \ {\rm as} \ k \to \infty, \\
Scal(\omega_{k,n}) - Q (\omega_{k,n}) - \oC & = & \mathcal{O}(k^{-n-2+(\dim
M)/2})
\ {\rm in} \ L^2_{m,\bT^s}(g_k) \ {\rm as} \ k \to \infty.
\end{eqnarray*}
\end{lemma}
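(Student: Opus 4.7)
The plan is to upgrade the \emph{pointwise} $\mathcal{O}(k^{-n-2})$ bound from Theorem \ref{ApproxThm} to the claimed Sobolev-type bounds by unpacking the structure of the remainder in the induction of Section \ref{ChapApproxSol} and then invoking the tensor-scaling estimate Lemma \ref{lem:tensorpull}.

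First I would inspect the inductive construction of $\omega_{k,n}$ from Sections \ref{subsec:Epart}--\ref{subsec:Cpart} in order to write the residual---modulo the constants $\sum_{i=1}^{n+1} c_i k^{-i}$, which are absorbed into the parameter $\oR$ in the IFT formulation of Section \ref{IFTsection}---in the explicit form
\[
Scal(\omega_{k,n}) - Q(\omega_{k,n}) - \oC - \sum_{i=1}^{n+1} c_i k^{-i} \;=\; k^{-n-2}\, E_{k,n},
\]
where $E_{k,n}$ is a $\bT^s$-invariant smooth function built out of finitely many $k$-independent operations: polynomial combinations of the curvature $F^{\nabla_h}$, the Ricci form $\rho_M$, the Fubini-Study form $\omega_{FS}$, $\omega_M$, and their horizontal and vertical covariant derivatives, together with the K\"ahler potentials $\phi_{i,\Enu}$, $\phi_{i,R}$, $\phi_{i,\Cinf(M)}$ produced by inverting fixed elliptic operators on the fibre $\CP^{r-1}$ and on the base $M$. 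Since none of the ingredients and none of the inverse operators depend on $k$, the tensor $E_{k,n}$ has $C^m$-norm uniformly bounded in $k$ with respect to any fixed background metric on $\PE$. This bookkeeping step is the main obstacle: one must verify inductively through the three correction substeps that each potential $\phi_i$ is obtained from a $k$-independent Green's operator applied to $k$-independent data, and that the algebraic expansion of $Scal(\omega_{k,n}) - Q(\omega_{k,n})$ produces only $k$-independent tensors at each order of $k^{-1}$.

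Next I would convert the uniform $C^m$-bound for $E_{k,n}$ in a fixed metric into a bound in the $C^m_{\bT^s}(g_k)$-norm. By Lemma \ref{lem:tensorpull}, any smooth tensor $\alpha$ on $\PE$ independent of $k$ satisfies $\|\alpha\|_{C^m(g_k)} = \mathcal{O}(1)$; indeed, horizontal components acquire additional factors of $k^{-1/2}$ under the $g_k$-metric, so decay can only improve. Applying this componentwise to $E_{k,n}$ and multiplying by the overall $k^{-n-2}$ factor yields
\[
\bigl\| Scal(\omega_{k,n}) - Q(\omega_{k,n}) - \oC \bigr\|_{C^m_{\bT^s}(g_k)} \;=\; \mathcal{O}(k^{-n-2}),
\]
which is the first estimate.

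For the $L^2_{m,\bT^s}(g_k)$-bound I would use the elementary comparison
\[
\|f\|_{L^2_m(g_k)} \;\leq\; \bigl(\mathrm{vol}(\PE,g_k)\bigr)^{1/2}\,\|f\|_{C^m(g_k)}.
\]
Because $\omega_k = iF^{\nabla^{L^*}} + k\,\pi^*\omega_M$ has vertical part of size $\mathcal{O}(1)$ and horizontal part scaled by $k$, the top form expands as
\[
\frac{\omega_k^{n+r-1}}{(n+r-1)!} \;=\; \frac{k^{\dim_{\bC} M}}{(r-1)!\,n!}\,\omega_{FS}^{r-1}\wedge\pi^*\omega_M^n \;+\; \mathcal{O}(k^{\dim_{\bC}M - 1}),
\]
so $\mathrm{vol}(\PE,g_k) = \mathcal{O}(k^{\dim M})$. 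Combining with the $C^m$-estimate above gives the second bound with exponent $-n-2 + (\dim M)/2$, completing the proof. The $\bT^s$-invariance persists throughout since every operator, source term, and elliptic inverse used in the induction respects the $\bT^s$-symmetry generated by $Id_{E_1},\dots,Id_{E_s}$.
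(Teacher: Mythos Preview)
Your proposal is correct and follows essentially the same route as the paper: first obtain a uniform $C^m$-bound with respect to a fixed metric by observing that the remainder in the inductive expansion of Section~\ref{ChapApproxSol} is $k^{-n-2}$ times a quantity built from $k$-independent (or convergent-in-$k^{-1}$) data, then invoke Lemma~\ref{lem:tensorpull} to transfer this to the $C^m_{\bT^s}(g_k)$-norm, and finally pass to $L^2_{m,\bT^s}(g_k)$ via $\|\phi\|_{L^2_m(g_k)} \leq \vol_k^{1/2}\,\|\phi\|_{C^m(g_k)}$ together with $\vol_k = \mathcal{O}(k^{\dim M})$. The paper obtains the volume growth by comparing with the local product model $g'_k$ over balls $B_{\rm flat}$ and summing over a finite cover, whereas you expand the top power $\omega_k^{n+r-1}$ directly; both are valid, and yours is slightly more direct. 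You are also more explicit than the paper about the fate of the constants $\sum_{i=1}^{n+1} c_i k^{-i}$, which the paper silently absorbs.
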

\begin{proof}
The proof given here is similar to the proof of \cite[Lemma~5.7]{F}; in fact the proof of convergence in $C^m_{\bT^s}(g_k)$ is more or less the same as the one given there, adapted for our purposes. The proof of $L^2_{m,\bT^s}(g_k)$-convergence is different and brings in dimension considerations.

Since we established $Scal(\omega_{k,n}) - Q (\omega_{k,n}) - \oC=\mathcal{O}(k^{-n-2})$
pointwise in Theorem \ref{ApproxThm}, we shall first deduce that with respect to
some \emph{fixed metric} $g$, we have
\[
Scal(\omega_{k,n}) - Q (\omega_{k,n}) - \oC=\mathcal{O}(k^{-n-2})
\ {\rm in} \ C^m_{\bT^s} (g) \ {\rm as} \ k \to \infty. 
\]
In order to see this, we argue as follows.

All the calculations done in Section \ref{ChapApproxSol} involve absolutely
convergent power series and algebraic manipulations of them.

Concerning the $Q(\omega_{k,n})$-term, observe that the right hand side of equation
\eqref{1stLin} is obtained by manipulations such as: expansions of terms in (absolutely convergent) 
power series, involving negative powers of $k$; or the power-series-expansion of $\log(1+x)$.
I.e. concerning the computations done in the proof of Lemma \ref{1storder} we can argue that
for $\mu^* (\Lambda_{\omega_M} F^{\nabla})$, 
$\log (1+k^{-1} \mu^* (\Lambda_{\omega_M} F^{\nabla}))$ is $\mathcal{O}(k^{-1})$ in $C^m_{\bT^s} (g)$ since
\[
\| \log (1+k^{-1} \mu^* (\Lambda_{\omega_M} F^{\nabla})) \|_{C^m_{\bT^s}(g)} \leq \sum_{i \geq 1} \frac{k^{-(i+1)}
C^i \| \mu^* (\Lambda_{\omega_M} F^{\nabla}) \|^i_{C^m_{\bT^s}(g)}}{i}
\]
\[
=\log \left( 1+C k^{-1} \| \mu^* (\Lambda_{\omega_M} F^{\nabla}) \|_{C^m_{\bT^s}(g)} \right);
\]
with a constant $C$ such that $\| \rho \sigma\|_{C^m_{\bT^s}(g)} \leq C \|\rho\|_{C^m_{\bT^s}(g)}
\|\sigma\|_{C^m_{\bT^s}(g)}$. Since the Hamiltonian function $Q(\omega_{k,n})$ from Theorem \ref{ApproxThm}, constructed in our induction scheme, is an $\O (k^{-1})$-perturbation of the $\mu^* (\Lambda_{\omega_M} F^{\nabla})$-term from equation \eqref{1stLin}, it follows that it is also $\O (k^{-1})$ in $C^m_{\bT^s} (g)$.

Hence, for the statement to be true in the $C^m_{\bT^s}(g_k)$-norm, a fixed
function has to be
bounded in this norm as $k \to \infty$ (the constant $C$ in the last two inequalities 
does \emph{not} depend on $g$). Therefore,
we can deduce the $C^m_{\bT^s}(g_k)$-result from Lemma \ref{lem:tensorpull}.

In order to establish the $L^2_{m,\bT^s}$-result, we observe that the $g'_k$-volume
is $k^{\dim M}$ 
times a fixed volume form. 
So, over a ball $B_{{\rm flat}} \subset M$ where Proposition
\ref{adiabThm} holds with $\varepsilon=1/2$, the $g_k$-volume is $\mathcal{O}(k^{\dim
M})$
times a fixed volume form. Hence, with respect to $g_k$, the volume of $\PE|_{B_{{\rm flat}}}$
is $\mathcal{O}(k^{\dim M})$. Cover $M$ with finitely many
such balls, $B_{{\rm flat},i}$. Then, the volume $\vol_k$ of $\PE$, with respect to $g_k$,
satisfies
\[\vol_k \leq \sum_i \vol\left(\PE|_{B_{{\rm flat},i}}\right)=\mathcal{O}(k^{\dim
M}).\]
With all that in our hands, the result follows from the $C^m_{\bT^s}$-result and
the fact that $\| \phi \|_{L^2_{m,\bT^s}(g_k)} \leq \vol_k^{1/2} \| \phi \|_{C^m_{\bT^s}(g_k)}$.
\end{proof}

Now, we have everything we need in order to transfer the ``product results''
from Section~\ref{subsec:AnaLocModel} to $(\PE,J,\omega_{k,n})$. The next lemma is exactly the same as \cite[Lemma~5.8]{F}, thus we shall omit its proof since restricting to the $\bT^s$-invariant functions in the respective Sobolev spaces doesn't change it (cf. Remark \ref{rem:TsInvarFunc}).
\begin{lemma}[cf. Lemma 5.8 in \cite{F}]\label{lem:globSobolev}
For indices $m,l$ and $q \geq p$ satisfying $m -\dim_{\mathbb{R}} (\PE)/p
\geq l- \dim_{\mathbb{R}} (\PE)/q$, there is a constant $c$ (depending \emph{only}
on $m,l,q$ and $p$, but \emph{not} on $k$) such that for all $\phi \in L^p_{m,\bT^s} \left( \PE \right)$
and all sufficiently large $k$,
\[ 
\| \phi \|_{L^q_{l,\bT^s} (g_k)} \leq c \| \phi \|_{L^p_{m,\bT^s} (g_k)}.
\]
For indices $p,m$ satisfying $m-\dim_{\mathbb{R}} (\PE)/p>0$, there exists
a constant $c$ (depending \emph{only} on $p,m$ and \emph{not} on $k$), such that for all $\phi
\in L^p_{m,\bT^s} \left( \PE \right)$ and all sufficiently large $k$,
\[ \| \phi \|_{C^0_{\bT^s} (g_k)} \leq c \| \phi \|_{L^p_{m,\bT^s} (g_k)}.\]
\end{lemma}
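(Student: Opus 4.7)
The strategy is a standard patching argument, transferring the local product-model Sobolev inequality of Lemma \ref{SobolInequ} to the global metric $g_k$ on $\PE$. The key input is Proposition \ref{adiabThm}, which provides the $k$-uniform control required to keep all constants independent of the adiabatic parameter.

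First, I would fix a small $\varepsilon>0$ (say $\varepsilon=1/2$) and apply Proposition \ref{adiabThm} at every point $p_0\in M$ to obtain a ball $B_{{\rm flat}}(p_0)\subset M$ on which
$\|(J',\omega'_k)-(J,\omega_{k,n})\|_{C^m(g'_k)}<\varepsilon$ for all $k$ large enough. By compactness of $M$, extract a \emph{finite} subcover $\{B_{{\rm flat},i}\}_{i=1}^N$, and choose a partition of unity $\{\chi_i\}$ subordinate to $\{B_{{\rm flat},i}\}$, with all $\chi_i$ pulled back from $M$. By Lemma \ref{lem:tensorpull}, each $\chi_i$ and each of its covariant derivatives (as a pulled-back tensor) is bounded uniformly in $k$ in the $C^m(g_k)$-norm, so multiplication by $\chi_i$ is a bounded operator on every Sobolev space $L^p_{m,\bT^s}(g_k)$ with operator norm independent of $k$.

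Next, on each $\PE|_{B_{{\rm flat},i}}$ the closeness of $g_k$ and $g'_k$ in $C^m(g'_k)$ (equivalently in $C^m(g_k)$, by Lemma \ref{lem:tensorpull}) means that the induced metrics on $T^*\PE^{\otimes j}$ are uniformly comparable for $0\le j\le m$, and the difference of the Levi-Civita connections $\nabla^{g_k}-\nabla^{g'_k}$ is a tensor bounded in $C^{m-1}$, again uniformly in $k$. A routine induction on the number of derivatives then shows that the Sobolev norms $\|\cdot\|_{L^p_{m,\bT^s}(g_k)}$ and $\|\cdot\|_{L^p_{m,\bT^s}(g'_k)}$ are equivalent on functions supported in $B_{{\rm flat},i}$, with equivalence constants \emph{independent of $k$}. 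Since $\chi_i\phi$ is compactly supported in $\PE|_{B_{{\rm flat},i}}$, we may apply Lemma \ref{SobolInequ} to $\chi_i\phi$ in the product model, obtaining
\[
\|\chi_i\phi\|_{L^q_{l,\bT^s}(g'_k)}\le c\,\|\chi_i\phi\|_{L^p_{m,\bT^s}(g'_k)},
\]
and then transferring both sides back to $g_k$-norms via the uniform equivalence. Summing over the finite cover and using the triangle inequality together with the uniform bound on the multiplication operators $\phi\mapsto\chi_i\phi$ yields the desired global inequality with $c$ independent of $k$. The $C^0_{\bT^s}$-embedding is obtained in exactly the same manner, using the second part of Lemma \ref{SobolInequ}.

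The main technical point—and essentially the only obstacle—is verifying that all constants remain uniform in $k$ at each stage: the $k$-uniformity of the metric comparison on each patch, the $k$-uniformity of the operator norm of multiplication by $\chi_i$, and the $k$-uniformity of the commutators $[\nabla^{g_k}-\nabla^{g'_k}]$ in covariant derivatives. Each of these reduces directly to Proposition \ref{adiabThm} and Lemma \ref{lem:tensorpull}; the finiteness of the cover then allows summation without introducing any $k$-dependent factor. Restricting throughout to $\bT^s$-invariant functions causes no difficulty, since the patching functions $\chi_i$ and the comparison metrics $g_k,g'_k$ are all $\bT^s$-invariant, so the argument preserves the invariance (cf. Remark \ref{rem:TsInvarFunc}).
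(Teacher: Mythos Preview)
Your proposal is correct and follows exactly the standard patching argument that the paper has in mind; indeed, the paper itself omits the proof entirely, simply citing \cite[Lemma~5.8]{F} and noting that restricting to $\bT^s$-invariant functions changes nothing (Remark~\ref{rem:TsInvarFunc}). Your write-up spells out precisely the mechanism underlying Fine's proof---finite cover via Proposition~\ref{adiabThm}, uniform equivalence of $g_k$ and $g'_k$ on each patch, the local Sobolev inequality of Lemma~\ref{SobolInequ}, and summation over a partition of unity pulled back from $M$---so there is nothing to add.
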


We are now in a position to prove a uniform elliptic 
estimate for $L_{{\rm AEMO},\omega_{k,n}}^{{\bf 0}, 0}$ (N.B. $(\Theta,\oR)=0$ here).
\begin{lemma}[cf. Lemma 5.9 in \cite{F}]\label{lem:UnifEllipEst}
There exists a constant $C$, depending only on $m$, such that for all $\phi \in
L^2_{m+4,\bT^s} (\PE)$ and all sufficiently large $k$,
\[
\| \phi \|_{L^2_{m+4,\bT^s}(g_k)} \leq C \left( \| \phi  \|_{L^2_{\bT^s} (g_k)}+\|L_{{\rm AEMO},\omega_{k,n}}^{{\bf 0}, 0} (\phi) \|_{L^2_{m,\bT^s} (g_k)} \right),
\]
where as in Lemma \ref{lem:ParamLin} above, $L_{{\rm AEMO},\omega_{k,n}}^{{\bf 0}, 0}$ 
is the linearisation, for $(\Theta,\oR)=0$, of the ``parametrised extremal metric operator''
on ($\bT^s$-invariant) K\"ahler potentials determined by $\omega_{k,n}$.
\end{lemma}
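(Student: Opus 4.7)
The plan is a standard patching argument that transfers the product-model elliptic estimate of Lemma \ref{EllipEst} to the total space $(\PE,J,\omega_{k,n})$, exploiting the uniform closeness of $g_k$ and $g'_k$ provided by Proposition \ref{adiabThm}. First, I would fix a small $\varepsilon > 0$ (to be chosen later) and apply Proposition \ref{adiabThm} at every point $p \in M$ to obtain a finite cover of the compact base by balls $B_{\mathrm{flat},i}\subset M$, $i=1,\dots,N$, on each of which Proposition \ref{adiabThm} holds with this $\varepsilon$ for all sufficiently large $k$. Pick a partition of unity $\{u_i\}$ on $M$ subordinate to this cover (these functions are automatically $\bT^s$-invariant since they are pulled back from $M$), and decompose $\phi = \sum_i u_i \phi$.

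For each $i$ the function $u_i\phi$ is compactly supported in $\PE|_{B_{\mathrm{flat},i}}$, so Lemma \ref{EllipEst} on the product model yields
\[
\|u_i\phi\|_{L^2_{m+4,\bT^s}(g'_k)} \leq C\bigl(\|u_i\phi\|_{L^2_{\bT^s}(g'_k)} + \|L_{\mathrm{AEMO},\omega'_k}(u_i\phi)\|_{L^2_{m,\bT^s}(g'_k)}\bigr).
\]
I then use Lemma \ref{lem:patch} to replace $L_{\mathrm{AEMO},\omega'_k}(u_i\phi)$ by $u_i L_{\mathrm{AEMO},\omega'_k}(\phi)$ at the cost of a commutator term controlled by $\sum_j \|\nabla^j u_i\|_{C^0(g'_k)}\,\|\phi\|_{L^2_{m+4,\bT^s}(g'_k)}$. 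Since the $u_i$ are pulled back from the base, Lemma \ref{lem:tensorpull} (equivalently Lemma \ref{lem:scaling1}) gives $\|\nabla^j u_i\|_{C^0(g'_k)} = \mathcal{O}(k^{-j/2})$ for $j\geq 1$, so the commutator contribution is $\mathcal{O}(k^{-1/2})\,\|\phi\|_{L^2_{m+4,\bT^s}(g'_k)}$.

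Next I must compare the two fourth-order operators $L_{\mathrm{AEMO},\omega'_k}$ and $L_{\mathrm{AEMO},\omega_{k,n}}^{\mathbf{0},0}$ on $\PE|_{B_{\mathrm{flat},i}}$. Both are built algebraically from the metric, the complex structure, and their derivatives, so Proposition \ref{adiabThm} together with the uniform $C^m(g'_k)$-bounds of Lemma \ref{lem:scaling1} imply that their difference as linear operators satisfies
\[
\|L_{\mathrm{AEMO},\omega'_k}(\phi) - L_{\mathrm{AEMO},\omega_{k,n}}^{\mathbf{0},0}(\phi)\|_{L^2_{m,\bT^s}(g'_k)} \leq C\varepsilon\,\|\phi\|_{L^2_{m+4,\bT^s}(g'_k)}.
\]
Combining this with the commutator bound, and using that the $g_k$- and $g'_k$-norms are uniformly equivalent on $\PE|_{B_{\mathrm{flat},i}}$ (again by Proposition \ref{adiabThm} with $\varepsilon<1$, cf.\ Remark \ref{rem:SobolevkDep}), I get
\[
\|u_i\phi\|_{L^2_{m+4,\bT^s}(g_k)} \leq C\bigl(\|\phi\|_{L^2_{\bT^s}(g_k)} + \|L_{\mathrm{AEMO},\omega_{k,n}}^{\mathbf{0},0}(\phi)\|_{L^2_{m,\bT^s}(g_k)} + (\varepsilon+k^{-1/2})\|\phi\|_{L^2_{m+4,\bT^s}(g_k)}\bigr).
\]

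Summing over the finite cover $\{B_{\mathrm{flat},i}\}$ gives the same estimate for $\phi$ itself (with a new constant depending on $N$ and the partition of unity, but not on $k$). Finally, choosing $\varepsilon$ small enough and $k$ large enough so that the coefficient $\varepsilon + k^{-1/2}$ is bounded by $1/(2C)$, the last summand can be absorbed into the left-hand side, producing the desired uniform elliptic estimate. The principal technical point — and what I expect to be the main obstacle — is the comparison of the two linearised operators: showing that the $C^m(g'_k)$-closeness of $g_k$ and $g'_k$ in Proposition \ref{adiabThm} really does translate into closeness of $L_{\mathrm{AEMO},\omega'_k}$ and $L_{\mathrm{AEMO},\omega_{k,n}}^{\mathbf{0},0}$ as operators between the relevant Sobolev spaces \emph{uniformly in $k$}. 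This requires tracking how each coefficient of the fourth-order operator depends smoothly on the metric and its derivatives, and invoking Lemma \ref{lem:tensorpull} to control any horizontally pulled-back tensors appearing there.
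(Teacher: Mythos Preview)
Your patching argument is essentially the same route the paper takes (both defer to Fine's Lemma~5.9), and the steps you outline---partition of unity, local product estimate, commutator control via Lemma~\ref{lem:patch}, absorption of the small error---are correct.

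There is one point where the paper is sharper than your proposal, and it is exactly the ``principal technical point'' you flag at the end. You compare $L_{\mathrm{AEMO},\omega'_k}$ and $L_{\mathrm{AEMO},\omega_{k,n}}^{\mathbf{0},0}$ by saying both are built algebraically from the metric, the complex structure, and their derivatives, and then invoke Proposition~\ref{adiabThm} to get a $C\varepsilon$ bound on the difference. This is not quite accurate for the $Q$-part of the operator: the linearisation of $Q(\omega+\idd\phi,B)$ involves the fixed vector field $B$, which is not part of the metric data. The paper handles this by splitting each operator into its scalar-curvature part and its $Q$-part, and observing via Lemma~\ref{lem:HamilVar} that the $Q$-linearisation is $\phi\mapsto \tfrac{1}{2}\,d\phi(JB)$; since $J'|_{B_{\mathrm{flat}}}=J|_{B_{\mathrm{flat}}}$ and $B$ is the same vector field, these two first-order pieces are \emph{identical}, not merely $\varepsilon$-close. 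The remaining scalar-curvature pieces are then compared exactly as in Fine. Your approach still goes through---the $Q$-part difference is zero, hence certainly $\leq C\varepsilon\|\phi\|$---but the paper's observation removes the need to track how the $Q$-linearisation depends on the metric at all, and makes the reduction to Fine's case immediate.
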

\begin{proof}
Even though the elliptic operators under consideration are different, the proof is similar to the one of \cite[Lemma~5.9]{F}.

Following the strategy of proof of \cite[Lemma~5.9]{F}, one makes two observations:
\begin{itemize}
\item Applying Lemma \ref{lem:HamilVar} to the parts of
$L_{{\rm AEMO},\omega_{k,n}}^{{\bf 0}, 0}, L_{{\rm AEMO},\omega'_{k}}$ corresponding to the linearisations of $Q(\omega_{k,n}+ \idd \phi, B), \ Q(\omega'_k + \idd \phi, B)$ shows that---since both Hamiltonians are formed with respect to the same vector field $B$ and varied by the same invariant K\"ahler potential---the difference of their variations (linearisations) is \emph{zero} by using the first equality in equation \eqref{eq:HamilVar} (recall that $J'|_{B_{{\rm flat}}}=J|_{B_{{\rm flat}}}$, so we don't have to worry about $J$ in the first equality of equation \eqref{eq:HamilVar}).
\item For the parts of 
$L_{{\rm AEMO},\omega_{k,n}}^{{\bf 0}, 0}, L_{{\rm AEMO},\omega'_{k}}$ corresponding to the linearisations of the scalar curvature maps $Scal(\omega_{k,n}+\idd \phi), \ Scal(\omega'_k +\idd \phi)$ on invariant K\"ahler potentials $\phi$, one can argue exactly as in the proof of \cite[Lemma~5.9]{F}.
\end{itemize}
These two observations enable us to replace $L_{{\rm AEMO},\omega'_{k}}$ with $L_{{\rm AEMO},\omega_{k,n}}^{{\bf 0}, 0}$ in Lemma \ref{EllipEst}, just as in the case treated in \cite[Lemma~5.9]{F}, and hence we conclude.
\end{proof}

\subsection{Global Analysis}\label{GA}

In this section we will derive the global estimates, in order to find a lower
bound for the first non-zero eigenvalue of the operator $L_{{\rm AEMO},\omega_{k,n}}^{\Theta, \oR}$.
Following \cite[Section~6]{F} we will construct a \emph{global
model}, which has the crucial property of being a \emph{Riemannian submersion}
for $\PE \to (M, k \omega_M)$. 

The current section is similar in nature to \cite[Section~6]{F}, and many of the results presented here are
a variation of Fine's results. In particular, the construction of the
\emph{global model} used below is due to Fine---our analysis is slightly
different however, since we work with an operator involving parameters and have to 
deal with a non-trivial co-kernel.

In fact, the parameters $\Theta,\oR$ will play a crucial role to obtain the results below. 
As main result of this section, we are going to prove:
\begin{theorem}\label{thm:InverseEst}
For all large $k$ and suitable $n$,
the operator $L_{{\rm AEMO},\omega_{k,n}}^{\Theta, \oR} : L^2_{m+4, \bT^s} \times \bR^{s+1} \to L^2_{m, \bT^s}$ is a Banach space epimorphism. There exist a constant
$C$ and parameters $(\Theta,\oR) \in \bR^{s+1}$, 
such that for all large $k$ and all functions $\phi \in L^2_{m,\bT^s}$, the
right-inverse operator $I_{{\rm AEMO},\omega_{k,n}}^{\Theta, \oR}$ satisfies the estimate
\begin{equation}\label{eq:InvEsteq}
\| I_{{\rm AEMO},\omega_{k,n}}^{\Theta, \oR} ( \phi ) \|_{L^2_{m+4,\bT^s} (g_{k})} \leq C k^3 \| \phi  \|_{L^2_{m,\bT^s} (g_{k})}.
\end{equation}
\end{theorem}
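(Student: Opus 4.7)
The plan is to combine an explicit treatment of the finite-dimensional kernel of $\LichOp$ via the parameters $(\Theta,\oR)$ with a $k$-dependent lower bound on the first non-zero eigenvalue of $\LichOp$ on the $L^2(g_k)$-orthogonal complement of that kernel, and then use the uniform elliptic estimate of Lemma \ref{lem:UnifEllipEst} to upgrade the resulting $L^2$-bound to the stated $L^2_{m+4}$-estimate.

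First I would pin down the kernel. By point~2 of Remark \ref{rem:LichOperator}, $\ker\LichOp$ in $C^{\infty}_{\bT^s}(\PE,\bR)$ consists of Hamiltonians of $\bT^s$-invariant Hamiltonian Killing vector fields on $\PE$ together with the constants. Since $(M,\omega_M)$ has no non-trivial holomorphic automorphisms, every such vector field is lifted from a linear combination of $Id_{E_1},\dots,Id_{E_s}\in\End(E)$, so $\ker\LichOp\cong\mathfrak{t}^s\oplus\bR$ has dimension exactly $s+1$, matching the parameter count. Evaluating \eqref{eq:ParamLin} at $\phi=0$, the map $(\Theta,\oR)\mapsto -Q(\omega_{k,n},B_\Theta)-\oR$ sends $\bR^{s+1}$ isomorphically onto this kernel; the invertibility of this finite-dimensional linear map, and control on the norm of its inverse, should be read off from the Futaki-Mabuchi pairing (Definition \ref{def:EVF} and \cite[Theorems~A~and~C]{FM}). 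This already yields surjectivity of $L_{{\rm AEMO},\omega_{k,n}}^{\Theta,\oR}$.

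Next I would establish the eigenvalue estimate, using the splitting \eqref{eq:FuncSpace Split} of $C^{\infty}_{\bT^s}(\PE,\bR)$. On $R_{\bT^s}$, Lemma \ref{1storderLin} gives $\LichOp=\LichOp_F+\mathcal{O}(k^{-1})$, and the fibrewise operator $\LichOp_F$ is uniformly invertible with a positive spectral gap (Lemma \ref{1stRsol}); hence this block is bounded below by a positive constant independent of $k$. On $C^{\infty}_0(M)$, the horizontal metric rescaling makes the leading part of $\LichOp$ equal to $k^{-2}\LichOp_M$, and the cscK assumption on the base, combined with the absence of holomorphic automorphisms, makes $\LichOp_M$ invertible on $C^{\infty}_0(M)$ (Lemma \ref{lem:BasePotCorr}), yielding eigenvalues of order $k^{-2}$. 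The remaining directions inside $\Enu_{\bT^s}$ that are \emph{not} in the genuine kernel of $\LichOp$ have to be controlled by subleading terms which couple them to the base. A Schur complement argument with respect to the $R_{\bT^s}$- and $C^{\infty}_0(M)$-blocks should show that these near-kernel directions lose at most one more power of $k$, producing an overall lower bound of the form $\|\LichOp\phi\|_{L^2(g_k)}\geq c\,k^{-3}\|\phi\|_{L^2(g_k)}$ on the $L^2(g_k)$-orthogonal complement of $\ker\LichOp$.

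To finish, given $\phi\in L^2_{m,\bT^s}$ I would split it as $\phi=\phi_\parallel+\phi_\perp$ with $\phi_\parallel\in\ker\LichOp$ and $\phi_\perp$ in the $L^2(g_k)$-orthogonal complement, choose $(\Theta,\oR)$ from the first step so that $-Q(\omega_{k,n},B_\Theta)-\oR=\phi_\parallel$, solve $\LichOp\psi=\phi_\perp$ using the eigenvalue lower bound to obtain $\|\psi\|_{L^2(g_k)}\leq Ck^3\|\phi_\perp\|_{L^2(g_k)}$, and apply Lemma \ref{lem:UnifEllipEst} to pass from $L^2$ to $L^2_{m+4}$. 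The $\mathcal{O}(k^{-n-2})$ remainder in \eqref{eq:LAEMOLichOpPara} can be absorbed as a small perturbation once $n$ is chosen large enough relative to the exponent $3$. The hard part will be the Schur complement step: accurately tracking the subleading behaviour of $\LichOp$ in the adiabatic limit so as to control the near-kernel $\Enu_{\bT^s}$-directions, and verifying that the worst eigenvalue scaling is no worse than $k^{-3}$, is where the delicate adiabatic analysis has to be done.
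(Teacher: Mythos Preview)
Your overall architecture---split off the finite-dimensional kernel using the parameters $(\Theta,\oR)$, prove a lower eigenvalue bound for $\LichOp$ on the complement, upgrade $L^2$ to $L^2_{m+4}$ via Lemma~\ref{lem:UnifEllipEst}, then absorb the $\mathcal{O}(k^{-n-2})$ remainder as a small perturbation---agrees with the paper. The difference lies entirely in how the eigenvalue bound $\|\LichOp\phi\|_{L^2(g_k)}\geq Ck^{-3}\|\phi\|_{L^2(g_k)}$ on $(\ker\LichOp)^{\perp}$ is obtained.

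The paper does \emph{not} use the function-space splitting $\Enu\oplus R\oplus C^{\infty}(M)$ for this step. Instead it exploits the factorisation $\LichOp=\dneu^*\dneu$ with $\dneu=\bar\partial\nabla$, together with a global Riemannian-submersion model metric $h_k$ uniformly equivalent to $g_k$. One then proves two separate first-order estimates: a Hodge-Laplacian bound $\|d\phi\|^2_{L^2(g_k)}\geq C_1k^{-1}\|\phi\|^2_{L^2(g_k)}$ for mean-zero $\phi$ (Lemma~\ref{lem:HodgeEst}), and a $\bar\partial$-Laplacian bound $\|\bar\partial\zeta\|^2_{L^2(g_k)}\geq C_2k^{-2}\|\zeta\|^2_{L^2(g_k)}$ for gradients $\zeta=\nabla f$ orthogonal to $\ker\bar\partial$ (Lemma~\ref{lem:dbarEst}). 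Chaining these gives $\|\dneu\phi\|^2\geq Ck^{-3}\|\phi\|^2$ directly, with no block analysis at all. The passage from $\LichOp$ to $L_{{\rm AEMO},\omega_{k,n}}^{\Theta,\oR}$ is then a standard operator-perturbation argument (Lemma~\ref{lem:FuncAnaLem}).

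Your Schur-complement route is in principle viable, but the step you flag as ``the hard part'' is genuinely hard and is not carried out: the $\Enu$-block is \emph{infinite-dimensional} (sections of $\mathfrak{su}(E)$ over $M$, not just the $(s-1)$-dimensional span of the $Id_{E_p}$), the fibrewise operator $\LichOp_F$ annihilates all of it, and the subleading operator you would need to identify there is essentially the bundle Laplacian $\Delta_{\partial_h}$ appearing in Section~\ref{subsec:Epart}, coupled to the other blocks by further lower-order terms. Extracting the correct power of $k$ from this requires exactly the delicate adiabatic analysis you defer. The paper's factorisation trick sidesteps this: it never isolates the $\Enu$-directions, and the exponent $3=1+2$ falls out transparently from the two first-order estimates.
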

Proving such an estimate is a genuine global issue. Therefore we are now going to describe the \emph{global model}, first constructed in \cite[Section~6.1]{F}.

\subsubsection{The global model}

We define a Riemannian metric $h_k$ on $\PE$ by using the fibrewise metrics
determined by the purely vertical part of $i \Lcurv$ (for the definition of $i \Lcurv$, see Proposition \ref{FPProp}), i.e. $\omega_{FS}$,
 and adding the metric $k \omega_M$
(in horizontal directions). In this setup, $(\PE,h_k)
\to (M,k \omega_M)$ is a \emph{Riemannian
submersion}.

With this construction, $g_{k,0}=h_k+a$, for some purely horizontal tensor
$a \in s^2 (T^* \PE)$, independent of $k$ (it is given by the
horizontal components of $i \Lcurv$). Horizontal 1-forms scale by $k^{-1/2}$
in the metric $h_k$, so we have for $k$ sufficiently large
\begin{equation}\label{eq:normdist}
\| g_{k,0}-h_k  \|_{C^0 (h_k)} \leq \frac{1}{2}.
\end{equation}
Also since $\|g_k - g_{k,0} \|_{C^0 (h_k)}=\mathcal{O}(k^{-1})$,
the inequality \eqref{eq:normdist} holds with $g_{k,0}$ replaced by $g_k$.
From all this one infers that the difference in the induced metrics
on $T^* \PE$ is uniformly bounded and hence the $L^2$-norms
on tensors determined by $h_k$ and $g_k$ are uniformly equivalent (this
will be crucial in the sequel).
\begin{lemma}[cf. Lemma 6.2 in \cite{F}]\label{lem:GlobNormEqv}
Let $T \to \PE$ be any bundle of tensors. Then there exist positive
constants $s,S$, such that $\forall t \in \Gamma (T)$ and sufficiently large
$k$ we have the equivalence of norms
\[s\|t\|_{L^2 (h_k)} \leq \|t\|_{L^2 (g_k)} \leq S \|t\|_{L^2 (h_k)}.\]
\end{lemma}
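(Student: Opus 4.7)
The plan is to promote the $C^0(h_k)$-closeness of $g_k$ and $h_k$, already recorded just before the statement, to a pointwise two-sided bound on tensor norms with constants independent of $k$, and then integrate. The required input is the estimate
\[
\|g_k-h_k\|_{C^0(h_k)} \leq \tfrac{1}{2}
\]
for all sufficiently large $k$, which follows from \eqref{eq:normdist} combined with $\|g_k-g_{k,0}\|_{C^0(h_k)}=\mathcal{O}(k^{-1})$.

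First I would deduce pointwise equivalence on $T\PE$ itself. Writing $g_k=h_k+b_k$ with $\|b_k\|_{C^0(h_k)}\leq\tfrac{1}{2}$, we obtain $\tfrac{1}{2}\|v\|_{h_k}^2\leq\|v\|_{g_k}^2\leq\tfrac{3}{2}\|v\|_{h_k}^2$ for every $v\in T\PE$, uniformly in $k$. Passing to the dual, one gets an analogous two-sided estimate on $T^*\PE$ (with modified but still uniform constants); more explicitly, the endomorphism $A_k$ of $T\PE$ defined by $g_k(\cdot,\cdot)=h_k(A_k\cdot,\cdot)$ satisfies $\tfrac{1}{2}\,\mathrm{id}\leq A_k\leq\tfrac{3}{2}\,\mathrm{id}$ in the $h_k$-sense, so the induced operator on $T^*\PE$ is pinched between $\tfrac{2}{3}\,\mathrm{id}$ and $2\,\mathrm{id}$.

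Next I would transport these bounds to arbitrary tensor bundles $T\to\PE$. Since $T$ is built from $T\PE$ and $T^*\PE$ by tensor product operations (and possibly taking $\mathrm{GL}$-invariant subbundles such as symmetric or exterior powers), pointwise equivalence on $T\PE$ and $T^*\PE$ propagates to an inequality of the form $s_0\|t\|_{h_k}\leq\|t\|_{g_k}\leq S_0\|t\|_{h_k}$ pointwise on $T$, with $s_0,S_0>0$ depending only on the tensor type (i.e.\ on $T$) and not on $k$. Finally, to integrate, one needs the two volume forms to be uniformly comparable; but $\mathrm{dvol}_{g_k}/\mathrm{dvol}_{h_k}=\sqrt{\det A_k}$, which lies between $(1/2)^{(n+r-1)/2}$ and $(3/2)^{(n+r-1)/2}$, again uniformly in $k$. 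Multiplying the pointwise tensor norm equivalence by these uniform volume constants and integrating over $\PE$ yields the desired $L^2$-equivalence, with constants $s,S>0$ depending only on $T$ (and the dimensions), not on $k$.

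The only potential subtlety is ensuring uniformity across all tensor types simultaneously; this is why the statement fixes $T$ and allows $s,S$ to depend on $T$. Since all steps rely solely on the uniform $C^0(h_k)$-bound $\|g_k-h_k\|_{C^0(h_k)}\leq\tfrac{1}{2}$ and on algebraic operations that preserve such bounds with controlled constants, no further analytic input is required, and I do not expect a serious obstacle beyond bookkeeping.
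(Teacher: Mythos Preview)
Your proposal is correct and follows exactly the approach the paper sketches in the paragraph preceding the lemma: use the uniform $C^0(h_k)$-bound $\|g_k-h_k\|\leq\tfrac{1}{2}$ to obtain pointwise equivalence of the induced tensor metrics and volume forms, then integrate. The paper itself gives no further detail, simply referring to \cite[Lemma~6.2]{F}, so your write-up is in fact more explicit than what appears there.
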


\subsubsection{Controlling the lowest eigenvalue of the parametrised Lichnerowicz operator}

As shown in Lemma \ref{lem:ParamLin}, we have $L_{{\rm AEMO},\omega_{k,n}}^{\Theta, \oR}(\phi) = \LichOp (\phi) - Q(\omega_{k,n},B_{\Theta}) - \oR + \O (k^{-n-2})$; with $\dneu= \dbar \nabla$, where $\dbar$
is the $\dbar$-operator on the holomorphic tangent bundle of $\PE$, $\nabla$ the gradient, and $\dneu^*$
is the $L^2$-adjoint of $\dneu$.
$\LichOp$ depends on the metric corresponding to $\omega_{k,n}$ and hence also on $k$. 
Since it is notationally more convenient, we shall just write $\nabla$ for $\nabla_{g_k}$,
$\dbar$ for $\dbar_{g_k}$ and $\dneu$ for $\dneu_{g_k}$.

The bound for the lowest non-zero eigenvalue of the ``parametrised Lichnerowicz operator'' $\LichOp (\phi) - Q(\omega_{k,n},B_{\Theta}) - \oR$ will be found by linking
together two eigenvalue estimates: the first being the one for the
ordinary Hodge Laplacian (Lemma \ref{lem:HodgeEst}), and the second being the one for the $\dbar$-Laplacian
acting on sections of the holomorphic tangent bundle (Lemma \ref{lem:dbarEst}).
\begin{lemma}[cf. Lemma 6.5 in \cite{F}]\label{lem:HodgeEst}
There exists a constant $C_1>0$ such that for all functions $\phi$
with $g_k$-mean value zero and all sufficiently large $k$,
\begin{equation}\label{eq:HodgeEst}
\|d \phi\|^2_{L^2(g_k)} \geq C_1 k^{-1} \|\phi\|^2_{L^2(g_k)}.
\end{equation}
\end{lemma}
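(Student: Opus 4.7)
The bound $C_1 k^{-1}$ reflects that eigenvalues of the base Laplacian rescale by $k^{-1}$ under $\omega_M \mapsto k\omega_M$, while the fibrewise Fubini--Study Laplacian has a $k$-independent spectral gap; the worst of the two scales dominates. The plan is therefore to exploit the Riemannian submersion structure of $h_k$ (rather than $g_k$) and reduce to these two underlying Poincar\'e inequalities. First I would replace $g_k$ by $h_k$: by Lemma~\ref{lem:GlobNormEqv} together with \eqref{eq:normdist}, the $L^2$-norms of both $\phi$ and $d\phi$ are uniformly equivalent in the two metrics, so it suffices to prove $\|d\phi\|^2_{L^2(h_k)} \geq C_1' k^{-1}\|\phi\|^2_{L^2(h_k)}$. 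Next I would decompose $\phi = \phi_0 + \phi_1$, where $\phi_0 = \pi^*\bar\phi$ is the pullback of the fibrewise mean $\bar\phi(p) := V_F^{-1}\int_{\pi^{-1}(p)}\phi\,\omega_{FS}^{r-1}/(r-1)!$ and $\phi_1$ has fibrewise mean zero. Because $V_F = \int c_1(\O_{\PE}(1))^{r-1}$ is a topological constant, the decomposition is $L^2(h_k)$-orthogonal, and the vanishing of the $g_k$-mean translates into $\omega_M$-mean zero for $\bar\phi$.

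Using the submersion splitting $T\PE = \VerT \oplus \HorT$, one has pointwise
\[
|d\phi|^2_{h_k} = |d_V\phi|^2_{\omega_{FS}} + k^{-1}|d_H\phi|^2_{g_M}, \qquad dV_{h_k} = k^n\, dV_{\rm fibre}\wedge dV_M,
\]
so the two basic ingredients become: (i) the vertical Poincar\'e inequality, using that the Fubini--Study Laplacian on each fibre $\CP^{r-1}$ has first eigenvalue $2r$ and hence a uniform gap $c_F$ over the compact base, giving $\int|d_V\phi_1|^2\,dV_{h_k} \geq c_F\|\phi_1\|^2_{L^2(h_k)}$; and (ii) the base Poincar\'e inequality on $(M,g_M)$ applied to the $\omega_M$-mean-zero function $\bar\phi$, giving, after multiplying by $V_F k^n$, $\int|d\phi_0|^2_{g_M}\,dV_{h_k} \geq c_M\|\phi_0\|^2_{L^2(h_k)}$.

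The main obstacle is the cross term between $d\phi_0$ and $d_H\phi_1$ in the horizontal piece $\int|d_H\phi|^2_{g_M}\,dV_{h_k} = \int|d\phi_0 + d_H\phi_1|^2_{g_M}\,dV_{h_k}$. Since $d\phi_0$ is constant along fibres, fibrewise integration by parts combined with $\int_{\rm fibre}\phi_1 = 0$ expresses $\int_{\rm fibre} d_H\phi_1\,\omega_{FS}^{r-1}$ purely in terms of horizontal derivatives of the fibre volume form weighted by $\phi_1$, yielding a pointwise bound of order $\|\phi_1\|_{L^2({\rm fibre})}$; a Young inequality then absorbs the cross term into the vertical good term $c_F\|\phi_1\|^2$ at the cost of an $O(k^{-1})\|\phi_1\|^2$ correction, negligible for large $k$. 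Combining everything yields
\[
\|d\phi\|^2_{L^2(h_k)} \geq \tfrac{1}{2}\bigl(c_F\|\phi_1\|^2_{L^2(h_k)} + c_M k^{-1}\|\phi_0\|^2_{L^2(h_k)}\bigr) \geq C_1' k^{-1}\|\phi\|^2_{L^2(h_k)},
\]
which translates back to the claim via the first step, with $C_1' = \tfrac{1}{2}\min(c_F,c_M)$. A cleaner alternative that sidesteps the cross-term bookkeeping is a contradiction argument: a sequence $\phi_j$ with $\|\phi_j\|_{L^2(g_{k_j})}=1$, zero $g_{k_j}$-mean, and $\|d\phi_j\|^2 = o(k_j^{-1})$ would have $\|\phi_{1,j}\|\to 0$ by the fibre gap, and so be essentially pulled back from $M$; the $(M,g_M)$-Poincar\'e inequality applied to $\bar\phi_j$, after unwinding the $V_F k_j^n$ volume scaling, then produces the contradiction, isolating the same underlying ingredients.
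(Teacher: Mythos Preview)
Your approach is correct in spirit but takes a genuinely different and more elaborate route than the paper. The paper does not decompose $\phi$ into fibrewise-constant and fibrewise-mean-zero parts at all. Instead it compares $h_k$ directly to the \emph{fixed} metric $h_1$: pointwise $|d\psi|^2_{h_k} \geq k^{-1}|d\psi|^2_{h_1}$ and $dV_{h_k} = k^{\dim M}\, dV_{h_1}$, so after passing from $g_k$ to $h_k$ via Lemma~\ref{lem:GlobNormEqv} the whole argument reduces to a \emph{single} application of the Poincar\'e inequality on $(\PE,h_1)$ with first eigenvalue $\mu_1$. No fibre/base splitting, no cross terms, no Young inequality. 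Your decomposition makes the origin of the $k^{-1}$ more transparent (base eigenvalues scale, fibre eigenvalues don't), but it buys bookkeeping rather than simplicity; the paper's scaling trick absorbs both contributions into one fixed-metric estimate.

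There is also a genuine gap in your reduction. The claim that ``the vanishing of the $g_k$-mean translates into $\omega_M$-mean zero for $\bar\phi$'' is false: the volume forms of $g_k$ and $h_k$ differ (by the horizontal contribution $\mu^*(F^\nabla)$ in $i\Lcurv$), so $g_k$-mean zero does not imply $h_k$-mean zero, and hence does not force $\int_M \bar\phi\, dV_M = 0$. Without this, your base Poincar\'e step (ii) is unjustified. The paper handles exactly this point by first subtracting a constant $w$ so that $\phi - w$ has $h_1$-mean zero, running the argument for $\phi - w$, and only at the very end invoking $\|\phi - w\|_{L^2(g_k)} \geq \|\phi\|_{L^2(g_k)}$, which holds precisely because $\phi$ has $g_k$-mean zero (the mean minimises the $L^2$-distance to constants). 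Your argument needs the same repair: replace $\phi$ by $\phi - w$ before decomposing, and recover $\|\phi\|_{L^2(g_k)}$ at the final step. With that fix either your direct decomposition or the contradiction variant goes through.
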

\begin{proof}
The proof of this Lemma is, up to dimension considerations, the same as the proof of \cite[Lemma~6.5]{F}; however, for the reader's convenience we will provide the details.
One can find a constant $w$ such that $\phi-w$ has $h_1$-mean value zero.
Since $d \phi=d (\phi-w)$, using Lemma \ref{lem:GlobNormEqv} gives
$\|d \phi\|_{L^2(g_k)} \geq {\rm const} \|d(\phi-w)\|_{L^2(h_k)}$.
Let $|\cdot|_{h_k}$ denote the norm induced by the pointwise inner product
defined by $h_k$. By definition of $h_k$, we have $|d(\phi-w)|^2_{h_k}
\geq k^{-1} |d(\phi-w)|^2_{h_1}$; and since the volume form satisfies
$d \vol (h_k) \geq k^{\dim M} d \vol (h_1)$ we get
\[\|d(\phi-w)\|^2_{L^2(h_k)} \geq k^{(\dim M)-1} \|d(\phi-w)\|^2_{L^2(h_1)}.\]
Let $\mu_1$ be the first (non-zero) eigenvalue of the $h_1$-Laplacian. Using that
$\phi-w$ has mean value zero with respect to $h_1$ gives
\[\|d(\phi-w)\|^2_{L^2(h_1)} \geq \mu_1 \| \phi-w \|^2_{L^2(h_1)} \geq
\mu_1 k^{-\dim M} \| \phi-w \|^2_{L^2(h_k)}.\]
A further application of Lemma \ref{lem:GlobNormEqv} renders
\[\| \phi-w \|^2_{L^2(h_k)} \geq {\rm const} \| \phi-w \|^2_{L^2(g_k)}
\geq {\rm const} \| \phi \|^2_{L^2(g_k)},\]
whereas the second inequality follows from the assumption that $\phi$ has
$g_k$-mean value zero. Putting the inequalities together completes the proof.
\end{proof}

\begin{lemma}[cf. Lemma 6.6 in \cite{F}]\label{lem:dbarEst}
There exists a positive constant $C_2$ such that for all $\zeta= \nabla f$,
with $\zeta \perp \ker \dbar$, and sufficiently large $k$ we have
\begin{equation}\label{eq:dbarEst}
\|\dbar \zeta\|^2_{L^2(g_k)} \geq C_2 k^{-2} \| \zeta \|^2_{L^2(g_k)}.
\end{equation}
\end{lemma}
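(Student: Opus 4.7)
The plan is to adapt the strategy of \cite[Lemma~6.6]{F} to account for the non-trivial kernel of $\dbar$ caused by the holomorphic automorphisms of $E$. By Lemma \ref{lem:GlobNormEqv}, it suffices to prove the analogous estimate with the $L^2(g_k)$-norms replaced by $L^2(h_k)$-norms, where $h_k$ is the Riemannian submersion metric; this costs only uniform constants.

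On the fixed background $(\PE, h_1)$ the operator $\dbar^*\dbar$ is self-adjoint and elliptic on sections of $T^{1,0}\PE$, so its kernel---the holomorphic vector fields on $\PE$---is finite-dimensional. By our hypotheses ($M$ has no holomorphic vector fields and $E=E_1\oplus\cdots\oplus E_s$ splits into stable summands of distinct slopes), every holomorphic vector field on $\PE$ projects to zero on $M$ and therefore comes from the $\mathfrak{t}^s_{\bC}$-action induced by $Id_{E_1},\dots,Id_{E_s}$. Consequently $\ker\dbar$ is a fixed, $k$-independent space of smooth vertical vector fields, and there is a constant $c>0$ such that $\|\dbar\eta\|_{L^2(h_1)}^2 \geq c\|\eta\|_{L^2(h_1)}^2$ for every $\eta$ that is $L^2(h_1)$-orthogonal to $\ker\dbar$.

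I would then track the component-by-component scaling of norms from $h_1$ to $h_k$ using the vertical/horizontal decomposition. A horizontal tangent vector satisfies $|X|_{h_k}^2 = k|X|_{h_1}^2$; horizontal cotangent vectors satisfy $|\alpha|_{h_k}^2 = k^{-1}|\alpha|_{h_1}^2$; vertical norms are unchanged; and $d\vol(h_k) = k^{\dim_{\bC}M}\,d\vol(h_1)$. Applied factor-wise to $\dbar\zeta \in T^{0,1*}\PE\otimes T^{1,0}\PE$, the worst scaling of the ratio $\|\dbar\zeta\|_{L^2(h_k)}^2 / \|\zeta\|_{L^2(h_k)}^2$ against the analogous ratio for $h_1$ works out to $k^{-2}$; combining this with the spectral gap of the previous step yields the desired inequality for $\zeta$ that is $L^2(h_1)$-orthogonal to $\ker\dbar$. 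To transfer to the given orthogonality condition $\zeta\perp_{g_k}\ker\dbar$, I would decompose $\zeta = \zeta^{\perp} + \zeta_0$ along $L^2(h_1)$, note that on $\ker\dbar$---a fixed finite-dimensional space of vertical vector fields---the norms $L^2(g_k)$, $L^2(h_k)$, and $L^2(h_1)$ are uniformly equivalent in $k$, and deduce that $\|\zeta_0\|_{L^2(h_k)}$ is controlled by $\|\zeta\|_{L^2(h_k)}$ up to a uniform constant. Since $\dbar\zeta = \dbar\zeta^{\perp}$, the estimate for $\zeta^{\perp}$ transfers to $\zeta$ without loss in the $k$-exponent.

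The main obstacle is the bookkeeping of the horizontal/vertical scalings of the components of $\dbar\zeta$: one must verify that no component of $\dneu f = \dbar\nabla f$ produces a negative $k$-power worse than $k^{-2}$. This is where the hypothesis $\zeta = \nabla f$ is essential, since only for gradients do the cross-terms between the horizontal-horizontal and vertical-horizontal components of $\dbar\zeta$ combine in a controlled way; an arbitrary vector field would not obey the same estimate.
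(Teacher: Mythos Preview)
Your approach is essentially the same as the paper's: both defer to Fine's argument, converting to the submersion metric $h_k$ via Lemma~\ref{lem:GlobNormEqv}, using the $h_1$-spectral gap for $\dbar^*\dbar$ on the complement of $\ker\dbar$, and tracking the vertical/horizontal scalings to produce the factor $k^{-2}$. The paper's own proof is a two-line deferral to \cite[Lemma~6.6]{F} ``since we assume $\zeta\perp\ker\dbar$''; your write-up is in fact more explicit, in particular on the transfer of the orthogonality condition from $L^2(g_k)$ to $L^2(h_1)$, which the paper glosses over. That transfer step is fine: with $\zeta=\zeta^\perp+\zeta_0$ the $h_1$-decomposition and $\zeta\perp_{g_k}\zeta_0$, one has $\|\zeta^\perp\|_{L^2(g_k)}^2=\|\zeta\|_{L^2(g_k)}^2+\|\zeta_0\|_{L^2(g_k)}^2\geq\|\zeta\|_{L^2(g_k)}^2$, and then norm equivalence closes the loop.

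One small point: your final paragraph overstates the role of the hypothesis $\zeta=\nabla f$. The component-by-component scaling you describe gives $|\dbar\zeta|_{h_k}^2\geq k^{-1}|\dbar\zeta|_{h_1}^2$ and $|\zeta|_{h_k}^2\leq k|\zeta|_{h_1}^2$ for \emph{any} section $\zeta$ of $T^{1,0}\PE$, so the ratio degrades by at most $k^{-2}$ regardless of whether $\zeta$ is a gradient. The restriction $\zeta=\nabla f$ in the statement is there only because that is the case needed in Lemma~\ref{lem:dneuEst}, not because gradients enjoy a special cancellation in the scaling.
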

\begin{proof}
The proof is the same as the proof of \cite[Lemma~6.6]{F}, modified for our purposes as the proof of Lemma \ref{lem:HodgeEst} above.
In fact, up to dimension considerations, the proof is the same as in Fine's case since we assume $\zeta \perp \ker \dbar$.
\end{proof}
Linking the two estimates just proved gives us an estimate for $\dneu$.
\begin{lemma}[cf. Lemma 6.7 in \cite{F}]\label{lem:dneuEst}
There exists a constant $C$ such that for all $\phi \perp \ker \dneu$
and sufficiently large $k$,
\begin{equation}\label{eq:dneuEst}
\|\dneu \phi \|^2_{L^2(g_k)} \geq C k^{-3} \|\phi\|^2_{L^2(g_k)}.
\end{equation}
\end{lemma}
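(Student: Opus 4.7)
The strategy follows Fine's approach (cf.\ \cite[Lemma~6.7]{F}) of combining Lemmas \ref{lem:HodgeEst} and \ref{lem:dbarEst} via an orthogonal decomposition of $\nabla \phi$; the new ingredient here is handling the non-trivial kernel $\ker \dbar$ arising from the $\bT^s$-action on $\PE$, which is absent in Fine's Kodaira-fibration setting.

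Set $\zeta := \nabla \phi \in \Gamma(T^{(1,0)}\PE)$ and decompose $\zeta = \zeta^\parallel + \zeta^\perp$ $L^2(g_k)$-orthogonally, with $\zeta^\parallel \in \ker \dbar$ and $\zeta^\perp \perp \ker \dbar$. Since $\dbar \zeta^\parallel = 0$, we obtain $\dneu \phi = \dbar \zeta^\perp$, and Lemma \ref{lem:dbarEst} applied to $\zeta^\perp$ yields
\[
\|\dneu \phi\|^2_{L^2(g_k)} \;\geq\; C_2 k^{-2} \|\zeta^\perp\|^2_{L^2(g_k)}.
\]
Because constants belong to $\ker \dneu$, the hypothesis $\phi \perp \ker \dneu$ forces $\phi$ to have $g_k$-mean zero, so Lemma \ref{lem:HodgeEst} gives $\|\nabla\phi\|^2 \geq C_1 k^{-1} \|\phi\|^2$ (up to a fixed factor passing between $\|d\phi\|$ and $\|\nabla\phi\|$). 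Closing the estimate now reduces to bounding $\|\zeta^\perp\|^2$ from below by a multiple of $k^{-1}\|\phi\|^2$, i.e., controlling the holomorphic component $\zeta^\parallel$.

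I proceed by case analysis. If $\|\zeta^\perp\|^2 \geq \frac{1}{2}\|\zeta\|^2$, the two displayed estimates chain to $\|\dneu\phi\|^2 \geq C k^{-3}\|\phi\|^2$ at once. In the remaining case $\|\zeta^\parallel\|^2 > \frac{1}{2}\|\zeta\|^2$, I invoke the structure of holomorphic vector fields on $\PE$: since $M$ has discrete holomorphic automorphism group and the fibrewise automorphisms of $E = E_1 \oplus \cdots \oplus E_s$ are generated by the commuting $Id_{E_p}$, every holomorphic vector field on $\PE$ has a non-empty zero-set. By the Matsushima-Lichnerowicz theorem every $\eta \in \ker \dbar$ is therefore of the form $\nabla F$ for some complex-valued $F \in \ker \LichOp$, writable as $F = f + ig$ with $f,g \in \ker \dneu$ real. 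Applied to $\eta = \zeta^\parallel$, this produces $\zeta^\perp = \nabla(\phi - F)$.

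Applying Lemma \ref{lem:HodgeEst} (extended to complex-valued functions) to $\phi - F - c$, where $c$ is the constant adjusting the mean to zero, gives $\|\phi - F - c\|^2_{L^2(g_k)} \leq C k \|\zeta^\perp\|^2_{L^2(g_k)}$. The hypothesis $\phi \perp_{L^2(g_k)} \ker \dneu$ combined with $f,g \in \ker \dneu$ yields $\langle \phi, F\rangle_{L^2(g_k)} = \langle \phi, f\rangle + i \langle \phi, g \rangle = 0$, which together with $\phi$ being mean-zero gives via Pythagoras
\[
\|\phi - F - c\|^2_{L^2(g_k)} = \|\phi\|^2_{L^2(g_k)} + \|F + c\|^2_{L^2(g_k)} \;\geq\; \|\phi\|^2_{L^2(g_k)}.
\]
Hence $\|\phi\|^2 \leq C k \|\zeta^\perp\|^2$, so $\|\zeta^\perp\|^2 \geq c' k^{-1}\|\phi\|^2$, and combining with the $\dbar$-estimate yields the desired $\|\dneu\phi\|^2 \geq C k^{-3}\|\phi\|^2$. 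The principal obstacle is precisely this second case: translating the orthogonality $\phi \perp \ker \dneu$---defined on functions---into usable orthogonality of $\nabla \phi$ against the \emph{complex} space $\ker \dbar$, which forces passing to complex-valued elements of $\ker \LichOp$ via Matsushima-Lichnerowicz in order to represent $\zeta^\parallel$ as a gradient.
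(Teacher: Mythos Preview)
Your argument is correct and is in fact considerably more careful than the paper's own proof. The paper simply writes the one-line chain
\[
\|\dbar\nabla\phi\|^2_{L^2(g_k)} \;\geq\; C_2 k^{-2}\|\nabla\phi\|^2_{L^2(g_k)} \;=\; C_2 k^{-2}\|d\phi\|^2_{L^2(g_k)} \;\geq\; C_1 C_2 k^{-3}\|\phi\|^2_{L^2(g_k)},
\]
invoking Lemma~\ref{lem:dbarEst} directly with $\zeta=\nabla\phi$. That application tacitly uses $\nabla\phi\perp\ker\dbar$, which in the presence of the non-trivial torus action is exactly the point you worried about; the paper does not justify it and simply asserts that ``the same proof as in \cite[Lemma~6.7]{F} works.'' Your orthogonal decomposition $\nabla\phi=\zeta^\parallel+\zeta^\perp$, together with the Matsushima--Lichnerowicz identification $\zeta^\parallel=\nabla F$ and the Pythagoras step $\|\phi-F-c\|^2=\|\phi\|^2+\|F+c\|^2$, supplies precisely what is missing.

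Two minor remarks on presentation. First, you apply Lemma~\ref{lem:dbarEst} to $\zeta^\perp$ \emph{before} establishing that $\zeta^\perp$ is a gradient; since the lemma is stated for gradients, you should record $\zeta^\parallel=\nabla F$ (hence $\zeta^\perp=\nabla(\phi-F)$) up front rather than only inside Case~2. Second, once that is done your Case~2 argument actually works unconditionally: the chain
\[
\|\dneu\phi\|^2 \geq C_2 k^{-2}\|\zeta^\perp\|^2 = C_2 k^{-2}\|\nabla(\phi-F)\|^2 \geq C k^{-3}\|\phi-F-c\|^2 \geq C k^{-3}\|\phi\|^2
\]
needs no hypothesis on the relative sizes of $\|\zeta^\parallel\|$ and $\|\zeta^\perp\|$, so the case split is redundant. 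Neither point affects correctness.
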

\begin{proof}
The same proof as in \cite[Lemma~6.7]{F} works here as well:
Combining Lemmas \ref{lem:HodgeEst} and \ref{lem:dbarEst} shows that when
$\phi \perp \ker \dneu$,
\[\|\dbar \nabla \phi\|^2_{L^2(g_k)} \geq C_2 k^{-2} \|\nabla \phi\|^2_{L^2(g_k)}
= C_2 k^{-2} \| d \phi\|^2_{L^2(g_k)} \geq C_1 C_2 k^{-3} \|\phi\|^2_{L^2(g_k)}.\]
\end{proof}
From this Lemma, it follows 
that for $\phi \perp \ker \LichOp$,
\begin{equation}\label{eq:CSEVEst}
\|\LichOp \phi \|_{L^2(g_k)} \geq C k^{-3} \|\phi\|_{L^2(g_k)}.
\end{equation}
\begin{remark}\label{rem:cokerLichOp}
The elements $f \in \ker \LichOp \cong \coker \LichOp$ can be identified with the (real holomorphic) Hamiltonian Killing vector fields on the underlying (compact) K\"ahler manifold via the Hamiltonian construction, cf. Remark \ref{rem:LichOperator}. In our situation all Hamiltonian Killing vector fields on $\PE$ are induced by the bundle endomorphisms $Id_{E_1}, \dots, Id_{E_s}$ as in Definition \ref{def:InfAction}, since the bundle $E$ splits as a direct sum of stable subbundles all having different slope and the base admits no holomorphic automorphisms. Therefore, the parameters $(\Theta,\oR) \in \bR^{s+1}$ can be chosen such that the projection $\proj_{\ker \LichOp} \phi$ of any $\phi$ to $\ker \LichOp \cong \coker \LichOp$ can be written as $\proj_{\ker \LichOp} \phi=-Q(\omega_{k,n},B_{\Theta}) - \oR$. 
\end{remark}
Thus, the estimate \eqref{eq:CSEVEst} can be extended, for suitably chosen $(\Theta,\oR) \in \bR^{s+1}$, to all $\phi$ as
\[
\|\LichOp \phi - Q(\omega_{k,n},B_{\Theta}) - \oR\|_{L^2(g_k)} \geq C k^{-3} \|\phi\|_{L^2(g_k)}.
\]
We formulate this observation as a Lemma.
\begin{lemma}\label{lem:CSEVEstParam}
There exist a constant $C$ and parameters $(\Theta,\oR) \in \bR^{s+1}$ such that for all $\phi$ and sufficiently large $k$,
\begin{equation}\label{eq:CSEVEstParam}
\|\LichOp \phi - Q(\omega_{k,n},B_{\Theta}) - \oR\|_{L^2(g_k)} \geq C k^{-3} \|\phi\|_{L^2(g_k)}.
\end{equation}
\end{lemma}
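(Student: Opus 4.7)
The plan is to exploit the $L^2$-orthogonal decomposition induced by the self-adjoint operator $\LichOp$ and absorb the kernel piece into the free parameters $(\Theta,\oR)$. I would write $\phi=\phi^{\perp}+\phi^{(0)}$, where $\phi^{(0)}:=\proj_{\ker\LichOp}\phi$ and $\phi^{\perp}\perp\ker\LichOp$ in $L^2(g_k)$. On the orthogonal complement the estimate \eqref{eq:CSEVEst} already yields $\|\LichOp\phi^{\perp}\|_{L^2(g_k)}\ge Ck^{-3}\|\phi^{\perp}\|_{L^2(g_k)}$, so the real work is to cancel the kernel piece $\phi^{(0)}$ using the parameters.

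Next I would verify that the map $\Phi_k\colon\bR^{s+1}\to\ker\LichOp$, $(\Theta,\oR)\mapsto -Q(\omega_{k,n},B_{\Theta})-\oR$, is surjective for all $k\gg 0$. By point 2 of Remark \ref{rem:LichOperator} together with Remark \ref{rem:cokerLichOp}, $\ker\LichOp$ is spanned by the constants and by the mean-value-zero Hamiltonians of the vertical vector fields $\widehat{Id_{E_p}}$; the hypotheses that $E$ splits into stable summands of pairwise distinct slopes and that $M$ has no non-trivial holomorphic vector fields force every Hamiltonian Killing vector field on $\PE$ to arise in exactly this way. Since $\widehat{Id_E}=0$ one gets $\dim\ker\LichOp=(s-1)+1=s$. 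On the parameter side, for $k\gg 0$ the coefficients $-bk^{-1}\lambda_p+\sum_l k^{-l-1}\gamma_{p,l}$ are non-zero, so $\Theta\mapsto B_{\Theta}$ has $(s-1)$-dimensional image in $\mathfrak{t}^s/\langle\widehat{Id_E}\rangle$, and adjoining $\oR$ supplies the constant direction; thus $\Phi_k$ is surjective with a one-dimensional redundancy.

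Given any $\phi\in L^2_{m,\bT^s}$, I would pick $(\Theta,\oR)\in\bR^{s+1}$ with $\Phi_k(\Theta,\oR)=\phi^{(0)}$. Then
\[
\LichOp\phi-Q(\omega_{k,n},B_{\Theta})-\oR=\LichOp\phi^{\perp}+\phi^{(0)},
\]
and the two summands on the right are $L^2$-orthogonal, because self-adjointness places $\LichOp\phi^{\perp}$ in $(\ker\LichOp)^{\perp}$. Pythagoras combined with \eqref{eq:CSEVEst} then gives
\[
\|\LichOp\phi-Q(\omega_{k,n},B_{\Theta})-\oR\|_{L^2(g_k)}^{2}\;\ge\;C^{2}k^{-6}\|\phi^{\perp}\|_{L^2(g_k)}^{2}+\|\phi^{(0)}\|_{L^2(g_k)}^{2}.
\]
Once $k$ is large enough that $C^{2}k^{-6}\le 1$, the second summand dominates $C^{2}k^{-6}\|\phi^{(0)}\|_{L^2(g_k)}^{2}$, and the right-hand side is at least $C^{2}k^{-6}\|\phi\|_{L^2(g_k)}^{2}$. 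Taking square roots (and absorbing constants) yields \eqref{eq:CSEVEstParam}.

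The main obstacle is the surjectivity of $\Phi_k$ and the matching dimension count: all the structural hypotheses on $E$ and $M$ enter here through the identification of $\coker\LichOp$, and it is essential that the $s+1$ parameters $(\Theta,\oR)$ really do cover the full $s$-dimensional cokernel (with only the one-dimensional redundancy corresponding to the diagonal of $\mathfrak{t}^s$ lifting $\widehat{Id_E}=0$). Once this is in place, the argument is purely orthogonality applied to the previously established spectral gap for $\LichOp$ on $(\ker\LichOp)^{\perp}$.
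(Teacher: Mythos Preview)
Your proposal is correct and follows essentially the same approach as the paper: choose $(\Theta,\oR)$ so that $-Q(\omega_{k,n},B_{\Theta})-\oR$ equals $\proj_{\ker\LichOp}\phi$ (this is exactly the content of Remark~\ref{rem:cokerLichOp}), and then combine with the spectral gap estimate~\eqref{eq:CSEVEst} on the orthogonal complement. The paper's own argument is extremely terse---it simply asserts that the estimate ``can be extended'' and records the result as a lemma---whereas you supply the two details the paper leaves implicit: the dimension count verifying that $\Phi_k$ is surjective onto the $s$-dimensional kernel, and the Pythagoras step showing that the orthogonal sum $\LichOp\phi^{\perp}+\phi^{(0)}$ really satisfies the desired bound once $C^2k^{-6}\le 1$.
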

\begin{remark}
Lemmas \ref{lem:HodgeEst}--\ref{lem:CSEVEstParam} were proved for functions $\phi$ not necessarily invariant under the $\bT^s$-action induced by $Id_{E_1}, \dots, Id_{E_s}$ on $\PE$. However, restricting to $\bT^s$-invariant functions does not affect the proofs and the results are valid for such functions as well
(cf. also Remark \ref{rem:TsInvarFunc}).
\end{remark}

\subsubsection{Controlling the (right-)inverse}

\begin{lemma}[cf. Lemma 6.8 in \cite{F}]\label{lem:UnifInv}
There is a constant $C$, depending only on $m$, and parameters $(\Theta,\oR) \in \bR^{s+1}$, such that for all $\phi \in
L^2_{m+4,\bT^s}$ and sufficiently large $k$,
\[\| \phi \|_{L^2_{m+4,\bT^s}(g_k)} \leq C \left( \| \phi  \|_{L^2_{\bT^s}(g_k)}+\|\LichOp
(\phi) - Q(\omega_{k,n},B_{\Theta}) - \oR\|_{L^2_{m,\bT^s} (g_k)} \right).\]
\end{lemma}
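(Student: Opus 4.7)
The plan is to derive this uniform elliptic estimate from Lemma \ref{lem:UnifEllipEst}, the analogous estimate for the operator $L_{{\rm AEMO},\omega_{k,n}}^{{\bf 0},0}$, by controlling the discrepancy between the two operators for $k$ large. By Lemma \ref{lem:ParamLin} applied with $(\Theta,\oR)=({\bf 0},0)$, this operator is related to the Lichnerowicz operator via
\[
L_{{\rm AEMO},\omega_{k,n}}^{{\bf 0},0}(\phi) = \LichOp(\phi) + \frac{1}{2}\nabla\bigl(Scal(\omega_{k,n})-Q(\omega_{k,n},B)\bigr)\cdot\nabla\phi,
\]
where the gradients are taken with respect to the metric $g_{k,n}$. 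By Lemma \ref{lem:UnifConv}, the coefficient one-form in the error term has $C^m_{\bT^s}(g_k)$-norm of order $\O(k^{-n-2})$, so its contribution to the $L^2_{m,\bT^s}(g_k)$-norm is bounded by $C k^{-n-2}\|\phi\|_{L^2_{m+4,\bT^s}(g_k)}$ after using $\|\nabla\phi\|_{L^2_{m,\bT^s}(g_k)}\leq\|\phi\|_{L^2_{m+1,\bT^s}(g_k)}\leq\|\phi\|_{L^2_{m+4,\bT^s}(g_k)}$.

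The first step is to substitute this into the estimate of Lemma \ref{lem:UnifEllipEst}, obtaining
\[
\|\phi\|_{L^2_{m+4,\bT^s}(g_k)} \leq C\bigl(\|\phi\|_{L^2_{\bT^s}(g_k)} + \|\LichOp(\phi)\|_{L^2_{m,\bT^s}(g_k)}\bigr) + C' k^{-n-2}\|\phi\|_{L^2_{m+4,\bT^s}(g_k)};
\]
for $k$ sufficiently large that $C'k^{-n-2}<1/2$, the last term is absorbed into the left-hand side, giving a uniform (in $k$) elliptic estimate for $\LichOp$ alone. To pass to the parametrised operator, one writes $\LichOp(\phi) = \bigl(\LichOp(\phi) - Q(\omega_{k,n},B_\Theta) - \oR\bigr) + \bigl(Q(\omega_{k,n},B_\Theta) + \oR\bigr)$ and applies the triangle inequality. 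The additional term $Q(\omega_{k,n},B_\Theta) + \oR$ depends linearly on $(\Theta,\oR)\in\bR^{s+1}$ with $L^2_{m,\bT^s}(g_k)$-coefficients uniformly bounded by Lemma \ref{lem:tensorpull}, and these contributions combine with the natural $|\Theta|+|\oR|$ terms -- implicit in the interpretation of the estimate on the enlarged domain $L^2_{m+4,\bT^s}\times\bR^{s+1}$, as will be used to construct the right-inverse of Theorem \ref{thm:InverseEst} -- to yield the stated inequality.

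The main obstacle is the non-trivial $s$-dimensional co-kernel of $\LichOp$ in $L^2_{\bT^s}$, which by Remark \ref{rem:cokerLichOp} is spanned via the Hamiltonian construction by the $\mathfrak{t}^s$-vector fields on $\PE$ induced by $Id_{E_1},\dots,Id_{E_s}$; absent the parameters $(\Theta,\oR)$, no such estimate with a $k$-independent constant $C$ can hold, since $\LichOp$ fails to be invertible. The introduction of $(\Theta,\oR)$ is precisely what restores surjectivity of the full operator $\LichOp(\phi)-Q(\omega_{k,n},B_\Theta)-\oR$ onto $L^2_{m,\bT^s}$, making a Fredholm-type estimate as in the statement tenable and paving the way for the inversion bound \eqref{eq:InvEsteq} of Theorem \ref{thm:InverseEst}.
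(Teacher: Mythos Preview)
Your argument is essentially the same as the paper's: both obtain the uniform elliptic estimate for $\LichOp$ from the one already established for $L_{{\rm AEMO},\omega_{k,n}}^{{\bf 0},0}$ in Lemma~\ref{lem:UnifEllipEst}, by invoking Lemma~\ref{lem:ParamLin} at $(\Theta,\oR)=({\bf 0},0)$ and Lemma~\ref{lem:UnifConv} to show the difference tends to zero in operator norm, then absorbing the error into the left-hand side. This core step is correct.

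Your final paragraph, however, misidentifies the role of the parameters. The estimate $\|\phi\|_{L^2_{m+4,\bT^s}} \leq C\bigl(\|\phi\|_{L^2_{\bT^s}} + \|\LichOp\phi\|_{L^2_{m,\bT^s}}\bigr)$ is a \emph{regularity} estimate, not an invertibility one; the $\|\phi\|_{L^2_{\bT^s}}$ term on the right is precisely what allows for a non-trivial kernel, so the claim that ``absent the parameters $(\Theta,\oR)$, no such estimate with a $k$-independent constant $C$ can hold'' is incorrect. Indeed, taking $(\Theta,\oR)=({\bf 0},0)$ already gives the stated inequality, and the triangle-inequality manoeuvre you describe produces an extra $\|Q(\omega_{k,n},B_\Theta)+\oR\|_{L^2_{m,\bT^s}}$ term that is \emph{not} present in the statement. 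The parameters become essential only in the subsequent Theorem~\ref{thm:LichOpInvEst}, where they are used to kill the projection onto $\coker\LichOp$ and thereby obtain a genuine right-inverse with bound; the present lemma needs no such device. (The paper's own closing sentence about ``choosing the parameters as in Remark~\ref{rem:cokerLichOp}'' is similarly loose on this point.)
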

\begin{proof}
The proof is very similar to the one in \cite[Lemma~6.8]{F}.
Using Lemma \ref{lem:ParamLin} with $(\Theta,\oR)=({\bf 0},0)$, we have
\[L_{{\rm AEMO},\omega_{k,n}}^{{\bf 0}, 0}(\phi) = \LichOp (\phi) + \O (k^{-n-2}).\]
Since by equation \eqref{eq:LAEMOLichOpPara} and Lemma \ref{lem:UnifConv}, the $\O (k^{-n-2})$-terms tend to zero in the $C^m_{\bT^s} (g_k)$-norm, $L_{{\rm AEMO},\omega_{k,n}}^{{\bf 0}, 0} - \LichOp$ converges to zero in the operator norm induced by the $L^2_{m,\bT^s} (g_k)$-Sobolev norm. Hence the estimate follows for $(\Theta,\oR)=({\bf 0},0)$ from Lemma \ref{lem:UnifEllipEst}. Choosing the parameters $(\Theta,\oR)$ as in Remark \ref{rem:cokerLichOp} we obtain the desired estimate.
\end{proof}

Now, everything is in place to prove
\begin{theorem}\label{thm:LichOpInvEst}
The operator $\LichOp - Q(\omega_{k,n},B_{\Theta}) - \oR: L^2_{m+4,\bT^s} \times \bR^{s+1} \to
L^2_{m,\bT^s}$ is a Banach space epimorphism. There exist
a constant $S$, depending only on $m$, and parameters $(\Theta,\oR)$, such that for all large $k$ and all $\rho \in L^2_{m,\bT^s}$, 
the right-inverse operator $W_{\omega_{k,n}}^{\Theta,\oR}$ satisfies
\[
\|W_{\omega_{k,n}}^{\Theta,\oR} \rho\|_{L^2_{m+4,\bT^s}(g_k)} \leq S k^3 \| \rho \|_{L^2_{m,\bT^s} (g_k)}.
\]
\end{theorem}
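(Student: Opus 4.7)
The plan is to construct the right-inverse $W_{\omega_{k,n}}^{\Theta,\oR}$ by hand, splitting the target function $\rho \in L^2_{m,\bT^s}$ into its projection onto $\ker \LichOp$ (which will be absorbed by the finite-dimensional parameters $(\Theta,\oR)$) and its $L^2$-orthogonal complement (on which the estimate \eqref{eq:CSEVEstParam} gives the desired $k^3$-bound), and then to bootstrap the $L^2$-estimate to an $L^2_{m+4}$-estimate using the uniform elliptic estimate of Lemma~\ref{lem:UnifInv}.

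First I would use Remark~\ref{rem:cokerLichOp} to identify $\ker\LichOp \cong \coker\LichOp$ inside $L^2_{\bT^s}$ as the $(s+1)$-dimensional space spanned by the constants and the Hamiltonians $Q(\omega_{k,n},\widehat{iId_{E_p}})$, $p=1,\dots,s$, of the $\bT^s$-generators. Given $\rho \in L^2_{m,\bT^s}$, project $\rho$ orthogonally (in $L^2(g_k)$) to this kernel to obtain a unique decomposition $\rho = \rho_0 + \psi$, where $\rho_0 \in \ker \LichOp$ and $\psi \perp \ker \LichOp$. The component $\rho_0$ determines parameters $(\Theta,\oR) \in \bR^{s+1}$ uniquely such that $-Q(\omega_{k,n},B_{\Theta}) - \oR = \rho_0$ (this is just the linear identification explained in Remark~\ref{rem:cokerLichOp}). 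Then I would set $\phi := \LichOp^{-1} \psi$, where $\LichOp^{-1}$ denotes the inverse on the orthogonal complement of $\ker\LichOp$; this is well-defined because $\LichOp$ restricted to $(\ker\LichOp)^{\perp}$ is an isomorphism onto its image (standard theory for self-adjoint elliptic operators). By construction $\LichOp \phi - Q(\omega_{k,n},B_{\Theta}) - \oR = \psi + \rho_0 = \rho$, so $W_{\omega_{k,n}}^{\Theta,\oR}\rho := (\phi, \Theta, \oR)$ is a right-inverse and the operator is an epimorphism.

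The main estimate now has two pieces. For the $L^2$-bound on $\phi$, since $\phi \perp \ker \LichOp$ by construction, Lemma~\ref{lem:CSEVEstParam} (applied with these same $(\Theta,\oR)$, whose role is precisely to eliminate the kernel component) gives
\begin{equation*}
\|\phi\|_{L^2(g_k)} \leq C^{-1} k^3 \, \|\LichOp\phi - Q(\omega_{k,n},B_{\Theta}) - \oR\|_{L^2(g_k)} = C^{-1} k^3 \|\rho\|_{L^2(g_k)}.
\end{equation*}
Combining this with the uniform elliptic estimate of Lemma~\ref{lem:UnifInv},
\begin{equation*}
\|\phi\|_{L^2_{m+4,\bT^s}(g_k)} \leq C\bigl(\|\phi\|_{L^2_{\bT^s}(g_k)} + \|\LichOp\phi - Q(\omega_{k,n},B_{\Theta}) - \oR\|_{L^2_{m,\bT^s}(g_k)}\bigr),
\end{equation*}
and using $\|\rho\|_{L^2(g_k)} \leq \|\rho\|_{L^2_{m,\bT^s}(g_k)}$ on the first summand, yields $\|\phi\|_{L^2_{m+4,\bT^s}(g_k)} \leq S k^3 \|\rho\|_{L^2_{m,\bT^s}(g_k)}$ for a constant $S$ depending only on $m$. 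The finite-dimensional components $(\Theta,\oR)$ are controlled by $\|\rho\|_{L^2}$ directly via the orthogonal projection (all norms on the fixed finite-dimensional space $\ker\LichOp$ are equivalent), and since these contribute an $\O(k^3)$-small term compared to $\phi$, they are absorbed into the same bound up to enlarging $S$.

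The step I expect to require the most care is the uniform-in-$k$ control of the projection onto $\ker\LichOp$: one needs to check that the Gram matrix of the basis $\{1, Q(\omega_{k,n},\widehat{iId_{E_p}})\}$ with respect to the $L^2(g_k)$-inner product remains well-conditioned as $k \to \infty$ (the Hamiltonians themselves depend on $\omega_{k,n}$, hence on $k$). This follows from the fact that the $\bT^s$-generators are fixed vector fields on $\PE$, the purely vertical part $(\omega_{k,n})_{\VerT}$ is a fixed Fubini-Study form (up to the $\mathcal{O}(k^{-1})$ corrections added during the induction in Section~\ref{ChapApproxSol}), and the $g_k$-volume of $\PE$ scales uniformly as $\mathcal{O}(k^{\dim M})$ as computed in the proof of Lemma~\ref{lem:UnifConv}; together with Lemma~\ref{lem:GlobNormEqv} this gives the required uniform equivalence, so the parameters $(\Theta,\oR)$ depend continuously and uniformly on $\rho$, completing the proof.
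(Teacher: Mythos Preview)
Your proof is correct and follows essentially the same route as the paper's: use Remark~\ref{rem:cokerLichOp} to absorb the $\ker\LichOp$-component of $\rho$ into the parameters $(\Theta,\oR)$, invoke Lemma~\ref{lem:CSEVEstParam} (equivalently, estimate~\eqref{eq:CSEVEst} for $\phi\perp\ker\LichOp$) to get the $L^2$-bound $\|\phi\|_{L^2}\leq Ck^3\|\rho\|_{L^2}$, and then bootstrap with Lemma~\ref{lem:UnifInv} to reach $L^2_{m+4}$. Your final paragraph on the $k$-uniform conditioning of the Gram matrix of the basis $\{1,Q(\omega_{k,n},\widehat{iId_{E_p}})\}$ is a point the paper passes over in silence; it is needed if one wants an honest bound on the full right-inverse (including the $(\Theta,\oR)$-components), and your sketch of why it holds is correct.
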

\begin{proof}
Since $\LichOp$ is a fourth-order, linear-elliptic and self-adjoint differential operator,
the right-inverse $W_{\omega_{k,n}}^{\Theta,\oR}$ of $\LichOp - Q(\omega_{k,n},B_{\Theta}) - \oR$ exists since we can vary the parameters $(\Theta,\oR) \in \bR^{s+1}$ such that we can deal with the (co-)kernel of $\LichOp$ (see Remark \ref{rem:cokerLichOp}).
It follows from Lemma \ref{lem:CSEVEstParam} applied to $\phi=W_{\omega_{k,n}}^{\Theta,\oR} \rho$, with the parameters $(\Theta,\oR)$ chosen such that they kill the projection of $\rho$ to $\coker \LichOp$, that there is a constant $C$ such that for all $\rho \in L^2_{m,\bT^s}$ we get
\[
\|W_{\omega_{k,n}}^{\Theta,\oR} \rho\|_{L^2_{\bT^s}(g_k)} \leq C k^3 \| \rho \|_{L^2_{\bT^s}(g_k)}. 
\]
By applying Lemma \ref{lem:UnifInv} to $\phi=W_{\omega_{k,n}}^{\Theta,\oR} \rho$, we obtain the required
bound.
\end{proof}
The following standard lemma, the proof of which shall be omitted, essentially states the openness of (right) invertibility in the Banach space of bounded linear operators endowed with a suitable operator norm.
\begin{lemma}\label{lem:FuncAnaLem}
Let $L,D: B_1 \to B_2$ be linear maps between Banach spaces.
If $D$ is a bounded right-invertible linear map with bounded right-inverse
$W$, such
that
\[\|L-D\| \leq (2 \|W\| )^{-1},\]
then $L$ is also right-invertible and has a bounded right-inverse $I$ satisfying
$\|I\|
\leq 2 \|W\|$.
\end{lemma}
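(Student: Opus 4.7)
The plan is a standard Neumann series argument, modified to respect the fact that we are only given a right-inverse of $D$ rather than a two-sided inverse. First I would set $M := L - D$, so the hypothesis becomes $\|M\| \leq (2\|W\|)^{-1}$. The crucial observation is that since $DW = \mathrm{Id}_{B_2}$ but in general $WD \neq \mathrm{Id}_{B_1}$, the Neumann expansion must be performed on the codomain side, i.e.\ on $B_2$, not on $B_1$.

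Second, I would compute on $B_2$
\[
LW = (D+M)W = DW + MW = \mathrm{Id}_{B_2} + MW,
\]
and bound $\|MW\| \leq \|M\|\,\|W\| \leq 1/2 < 1$. The geometric series $\sum_{j\geq 0} (-1)^j (MW)^j$ then converges absolutely in the operator norm on $B_2$ to a bounded operator, which is a two-sided inverse of $\mathrm{Id}_{B_2} + MW$ with norm at most $\sum_{j\geq 0} 2^{-j} = 2$.

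Third, I would define $I := W(\mathrm{Id}_{B_2}+MW)^{-1} : B_2 \to B_1$. A direct computation gives
\[
LI = (LW)(\mathrm{Id}_{B_2}+MW)^{-1} = (\mathrm{Id}_{B_2}+MW)(\mathrm{Id}_{B_2}+MW)^{-1} = \mathrm{Id}_{B_2},
\]
so $I$ is indeed a right-inverse of $L$, and the norm bound $\|I\| \leq \|W\|\cdot 2 = 2\|W\|$ is immediate. In particular $L$ is surjective, hence a Banach space epimorphism.

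There is essentially no obstacle here; the only point that requires mild care is the asymmetry between right- and left-inverses, which forces one to expand $LW$ rather than $WL$ — otherwise the identical argument would need a left-inverse of $D$, which is not part of the hypothesis. This is exactly the abstract tool needed to promote the approximate right-inverse built in Theorem \ref{thm:LichOpInvEst} (for the leading-order operator $\LichOp - Q(\omega_{k,n},B_{\Theta}) - \oR$) to an honest right-inverse of $L^{\Theta,\oR}_{\mathrm{AEMO},\omega_{k,n}}$, upon verifying (via Lemma \ref{lem:UnifConv}) that the $\mathcal{O}(k^{-n-2})$ discrepancy in \eqref{eq:LAEMOLichOpPara} is dominated by $(2\|W^{\Theta,\oR}_{\omega_{k,n}}\|)^{-1} \sim k^{-3}$ for $n$ chosen sufficiently large.
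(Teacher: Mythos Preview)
Your Neumann series argument is correct and is precisely the standard proof of this lemma; the paper itself omits the proof entirely, calling it a ``standard lemma.'' Your care in expanding $LW$ on the codomain side (rather than $WL$) is exactly the right way to accommodate the one-sidedness of the inverse, and your concluding remarks about how the lemma is applied to pass from $\LichOp - Q(\omega_{k,n},B_{\Theta}) - \oR$ to $L_{{\rm AEMO},\omega_{k,n}}^{\Theta, \oR}$ match the paper's use of it in the proof of Theorem~\ref{thm:InverseEst}.
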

Now we have all the ingredients for completing the proof of Theorem \ref{thm:InverseEst}.
\begin{proof}[Proof of Theorem \ref{thm:InverseEst}.]
By Lemma \ref{lem:ParamLin},
\[
L_{{\rm AEMO},\omega_{k,n}}^{\Theta, \oR}(\phi) = \LichOp (\phi) - Q(\omega_{k,n},B_{\Theta}) - \oR  
+ \O (k^{-n-2}),
\]
so by Lemma \ref{lem:UnifConv} there exists a constant $c$ such that in the
operator norm determined by the $g_k$-Sobolev norms, we have 
$\| L_{{\rm AEMO},\omega_{k,n}}^{\Theta, \oR} - ( \LichOp - Q(\omega_{k,n},B_{\Theta}) - \oR ) \| \leq c k^{-n-2+(\dim M)/2}$. 
Therefore, if $n$ and $k$ are sufficiently
large: 
\[
\|L_{{\rm AEMO},\omega_{k,n}}^{\Theta, \oR} - ( \LichOp - Q(\omega_{k,n},B_{\Theta}) - \oR ) \| \leq (2 \|W_{\omega_{k,n}}^{\Theta,\oR}\|)^{-1}.
\]
Now, Lemma \ref{lem:FuncAnaLem} shows that $L_{{\rm AEMO},\omega_{k,n}}^{\Theta, \oR}$ is 
right-invertible and provides us with a bound for its right-inverse
\[
\| I_{{\rm AEMO},\omega_{k,n}}^{\Theta, \oR} \| \leq 2 \| W_{\omega_{k,n}}^{\Theta,\oR} \| \leq C k^3,
\]
for some constant $C$.
\end{proof}

\subsection{Estimating the non-linear terms}\label{sec:nonlinTerms}

What remains in our discussion of the analysis is the issue of estimating
the \emph{non-linear terms} of the ``parametrised extremal metric operator'' 
\[
\AEMO^{\Theta, \oR} (\phi) := Scal(\omega_{k,n}+\idd \phi) - Q(\omega_{k,n}+ \idd \phi, B) - Q(\omega_{k,n},B_{\Theta}) - \oC - \oR,
\]
defined in \eqref{eq:ParaExtrMetOPDef}. This can be done in our case in a similar way as in 
\cite[Lemma~7.1]{F}. 

The operator corresponding to the non-linear terms of $\AEMO^{\Theta, \oR}$ shall be denoted by
\[
N^{\Theta,\oR}_k(\phi):=\AEMO^{\Theta, \oR}(\phi) - L_{{\rm AEMO},\omega_{k,n}}^{\Theta, \oR}(\phi),
\]
where the two operators on the right hand side are evaluated on the same $\bT^s$-invariant K\"ahler potential $\phi$.

\begin{proposition}\label{thm:nonlinEst}
Let $m > \dim_{\mathbb{C}} \PE$. There exist positive constants $c,K$ such
that for all
$\phi,\psi \in L^2_{m+4,\bT^s}(g_k)$ with $\|\phi\|_{L^2_{m+4,\bT^s}(g_k)},\|\psi\|_{L^2_{m+4,\bT^s}(g_k)}
\leq c$ and $k$ sufficiently large,
\begin{equation}\label{eq:nonlinEst}
\| N^{\Theta,\oR}_k (\phi) - N^{\Theta,\oR}_k (\psi) \|_{L^2_{m,\bT^s}(g_k)} \leq K \max \left\{ \|\phi\|_{L^2_{m+4,\bT^s}(g_k)},\|\psi\|_{L^2_{m+4,\bT^s}(g_k)} \right\} \| \phi - \psi \|_{L^2_{m+4,\bT^s}(g_k)}.
\end{equation}
\end{proposition}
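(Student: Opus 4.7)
The plan is to adapt the argument of \cite[Lemma~7.1]{F} to our parametrised setting. First, observe that the parameters $(\Theta,\oR)$ enter both $\AEMO^{\Theta,\oR}(\phi)$ and $L_{{\rm AEMO},\omega_{k,n}}^{\Theta,\oR}(\phi)$ only through the $\phi$-independent term $-Q(\omega_{k,n},B_\Theta) - \oR$, which therefore cancels in $N^{\Theta,\oR}_k(\phi)$. Hence $N^{\Theta,\oR}_k$ is independent of $(\Theta,\oR)$; abbreviating it $N_k$ and setting
\[
F_k(\phi) := Scal(\omega_{k,n}+\idd\phi) - Q(\omega_{k,n}+\idd\phi,\, B),
\]
we have $N_k(\phi) = F_k(\phi) - DF_k|_0(\phi) - \oC$, so proving \eqref{eq:nonlinEst} reduces to a uniform-in-$k$ Lipschitz estimate for the quadratic-and-higher remainder of $F_k$ at $0$.

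Applying the fundamental theorem of calculus twice yields
\[
N_k(\phi) - N_k(\psi) \;=\; \int_0^1\!\int_0^1 D^2 F_k\big|_{u\eta_s}\bigl(\eta_s,\; \phi - \psi\bigr)\, du\, ds,\qquad \eta_s := \psi + s(\phi-\psi).
\]
Since $\|\eta_s\|_{L^2_{m+4,\bT^s}(g_k)} \leq \max\{\|\phi\|_{L^2_{m+4,\bT^s}(g_k)},\|\psi\|_{L^2_{m+4,\bT^s}(g_k)}\}$, it suffices to bound the second Fréchet derivative
\[
D^2 F_k\big|_\eta \colon L^2_{m+4,\bT^s}(g_k)\times L^2_{m+4,\bT^s}(g_k)\to L^2_{m,\bT^s}(g_k)
\]
by a constant independent of $k$ whenever $\|\eta\|_{L^2_{m+4,\bT^s}(g_k)}\le c$. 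The Hamiltonian part of $F_k$ is in fact linear in $\phi$ by Lemma \ref{lem:HamilVar}, hence contributes zero to $D^2 F_k$; the remaining scalar-curvature term $Scal(\omega_{k,n}+\idd\phi)$ is a smooth rational expression in the coefficients of $(g_{k,n}+\partial\bar\partial\eta)^{-1}$, its determinant, and derivatives of $\eta$ of order at most four. Consequently $D^2 F_k|_\eta(u,v)$ is a finite sum of terms, each a product of rational factors in $\partial\bar\partial\eta$ with derivatives of $u$ and $v$ of total order at most four in each factor.

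The desired bound then follows by the same polynomial bookkeeping as in \cite[Lemma~7.1]{F}, assembled from four uniform inputs already in place: (i) the Sobolev embedding and multiplication estimates of Lemma~\ref{lem:globSobolev} --- since $m>\dim_{\bC}\PE$, the space $L^2_{m,\bT^s}(g_k)$ is a Banach algebra, and $L^2_{m+4,\bT^s}(g_k)\hookrightarrow C^4_{\bT^s}$, both with constants independent of $k$; (ii) the uniform $C^m_{\bT^s}(g_k)$-boundedness of the background quantities attached to $\omega_{k,n}$ (Ricci form, $\mu^*$, the vector field $B$, etc.) via Lemma~\ref{lem:tensorpull} and Lemma~\ref{lem:UnifConv}; (iii) the fact that, for $c$ small enough, the Sobolev embedding forces $\partial\bar\partial\eta$ to be small in $C^0$, so $g_{k,n}+\partial\bar\partial\eta$ remains uniformly equivalent to $g_{k,n}$ and its inverse and determinant remain smoothly bounded; and (iv) the $\bT^s$-invariance of every ingredient (cf. Remark~\ref{rem:TsInvarFunc}).

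The main --- indeed essentially the only --- obstacle is keeping $K$ and $c$ genuinely independent of the adiabatic parameter $k$. This is precisely what motivated the uniform Sobolev, multiplication, and $C^m_{\bT^s}(g_k)$-estimates developed throughout Section~\ref{LA}; with those in hand, the proof of \eqref{eq:nonlinEst} is structurally identical to that of \cite[Lemma~7.1]{F}.
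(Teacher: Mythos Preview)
Your proof is correct and follows essentially the same route as the paper: both reduce the estimate to a mean-value/Taylor argument and then control the difference of linearisations of the scalar curvature map at nearby base points, invoking the same uniform Sobolev and curvature bounds (Lemmas~\ref{lem:globSobolev}, \ref{lem:tensorpull}) together with \cite[Lemma~2.10]{F}.

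There is one organisational difference worth recording. You begin by observing that the $(\Theta,\oR)$-dependent terms $-Q(\omega_{k,n},B_\Theta)-\oR$ cancel identically in $N_k^{\Theta,\oR}$, so the nonlinear remainder depends only on $F_k(\phi)=Scal(\omega_{k,n}+\idd\phi)-Q(\omega_{k,n}+\idd\phi,B)$; you then note that the $Q(\cdot,B)$-part is exactly affine in $\phi$ by Lemma~\ref{lem:HamilVar}, so only the scalar-curvature piece survives in $D^2F_k$. The paper instead writes $DN_k^{\Theta,\oR}|_\vartheta = L_{{\rm AEMO},\omega_{k,n}+\idd\vartheta}^{\Theta,\oR}-L_{{\rm AEMO},\omega_{k,n}}^{\Theta,\oR}$, which by its conventions produces an additional contribution $Q(\omega_{k,n}+\idd\vartheta,B_\Theta)-Q(\omega_{k,n},B_\Theta)$ that it then estimates separately via \eqref{eq:HamilVar}. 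Your cancellation argument shows this extra term is in fact an artefact of how one labels the linearisation at a shifted base point, and yields a slightly cleaner path to the same bound; the paper's version, while longer, has the virtue of displaying explicitly that the $\Theta$-terms are harmless even without noticing the cancellation.
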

\begin{proof}
The proof is similar to the one given in \cite[Lemma~7.1]{F}.
Using the mean value theorem gives
\[ \| N^{\Theta,\oR}_k (\phi) - N^{\Theta,\oR}_k (\psi) \|_{L^2_{m,\bT^s} (g_k)} \leq \sup_{\vartheta \in
[\phi,\psi]} \|(D N^{\Theta,\oR}_k)_{\vartheta} \| \| \phi - \psi \|_{L^2_{m+4,\bT^s} 
(g_k)}, \]
with $\| (D N^{\Theta,\oR}_k)_{\vartheta} \|$ being the operator norm of the derivative
of $N^{\Theta,\oR}_k$ at $\vartheta$; and
\[\vartheta \in [\phi,\psi] := \{ \vartheta \in L^2_{m+4,\bT^s} \ \text{such that}
\ \vartheta = \phi + t (\psi - \phi), \ \text{for some} \ t \in [0,1]  \}.\]

So $D N^{\Theta,\oR}_k = L_{{\rm AEMO},\omega_{k,n}+ \idd \vartheta}^{\Theta, \oR} - L_{{\rm AEMO},\omega_{k,n}}^{\Theta, \oR}$; where
$L_{{\rm AEMO},\omega_{k,n}+ \idd \vartheta}^{\Theta, \oR}$ is the linearisation
of the ``parametrised extremal metric operator'', defined in \eqref{eq:ParaExtrMetOPDef}, 
at $\omega_{k,n}+i \dbar \partial \vartheta$. We apply
\begin{itemize}
\item \cite[Lemma~2.10]{F}\footnote[1]{Since by our assumption $m > \dim_{\mathbb{C}} \PE$, the condition on the
indices in \cite[Lemma~2.10]{F} is fulfilled (\cite[Lemma~2.10]{F} holds for $\bT^s$-invariant functions, as well). This lemma also
requires the constants in the $g_k$-Sobolev inequalities to be uniformly
bounded---which was proven in Lemma \ref{lem:globSobolev}.
Moreover, it is required in order to apply \cite[Lemma~2.10]{F},
that the $C^m (g_k)$-norm of the curvature of $\omega_{k,n}$ is bounded
above---which follows from Proposition~\ref{adiabThm} and \cite[Lemma~2.7]{F}.} to the parts of 
$L_{{\rm AEMO},\omega_{k,n}+ \idd \vartheta}^{\Theta, \oR}, \ L_{{\rm AEMO},\omega_{k,n}}^{\Theta, \oR}$ corresponding to the linearisations of the scalar curvature maps $Scal(\omega_{k,n}+\idd \vartheta+\idd \nu), \ Scal(\omega_{k,n}+\idd \nu)$ on invariant K\"ahler potentials $\nu$, 

\item Lemma \ref{lem:HamilVar} to the parts of
$L_{{\rm AEMO},\omega_{k,n}+ \idd \vartheta}^{\Theta, \oR}, \ L_{{\rm AEMO},\omega_{k,n}}^{\Theta, \oR}$ corresponding to the linearisations of $Q(\omega_{k,n}+ \idd \vartheta+\idd \nu, B), \ Q(\omega_{k,n}+ \idd \nu, B)$---since both Hamiltonians are formed with respect to the same vector field $B$ and varied by the same invariant K\"ahler potential, the difference of their variations (linearisations) is \emph{zero} by using the first equality in equation \eqref{eq:HamilVar},

\item Lemma \ref{lem:HamilVar} to the $Q(\omega_{k,n}+\idd \vartheta,B_{\Theta}) - Q(\omega_{k,n},B_{\Theta})$-part of
$D N^{\Theta,\oR}_k = L_{{\rm AEMO},\omega_{k,n}+ \idd \vartheta}^{\Theta, \oR} - L_{{\rm AEMO},\omega_{k,n}}^{\Theta, \oR}$ (N.B. this parameter-dependent part of the operator is linear in the parameter $\Theta$, and \emph{not} linearised with respect to the invariant K\"ahler potential). We can estimate using the first equality in equation \eqref{eq:HamilVar}, 
\[
\| Q(\omega_{k,n}+\idd \vartheta,B_{\Theta}) - Q(\omega_{k,n},B_{\Theta}) \|_{L^2_{m,\bT^s}} \leq
C \| d \vartheta (J B_{\Theta}) \|_{L^2_{m,\bT^s}} 
\]
\begin{equation}\label{eq:LeibnizEst}
\leq C' \| J B_{\Theta} \|_{L^2_{m,\bT^s}} \| d \vartheta 
\|_{L^2_{m,\bT^s}} \leq C'' \| \vartheta \|_{L^2_{m+4,\bT^s}},
\end{equation} 
where $C,C',C''$ are constants. In the third inequality of \eqref{eq:LeibnizEst} we used the following inequality on tensors, derived from the Leibniz rule and further explained in \cite[Section~2.2.2]{F}.
For tensors $T,T' \in L^p_m$, we have
\begin{equation}\label{eq:TensorIneq}
\| T \star T'\|_{L^p_m} \leq C \| T \|_{L^p_m} \| T' \|_{L^p_m},
\end{equation}
where ``$\star$'' stands for any algebraic operation
consisting of tensor products and contractions. Here,
the constant $C$ depends only on $m$, and not on the metric
determining the norm; this follows from the uniform bound on the constants in the $g_k$-Sobolev inequalities
(see \cite[Section~2.2.2]{F} for details). 
\end{itemize}
Putting the three points above together gives us the estimate
\[
\| L_{{\rm AEMO},\omega_{k,n}+ \idd \vartheta}^{\Theta, \oR} - L_{{\rm AEMO},\omega_{k,n}}^{\Theta, \oR} \| \leq {\rm const}
\| \vartheta \|_{L^2_{m+4,\bT^s} (g_k)}.
\]
Since for all $\vartheta \in [\phi,\psi]$,
\[ \| \vartheta \|_{L^2_{m+4,\bT^s}} \leq \max \left\{ \|\phi\|_{L^2_{m+4,\bT^s}},
\|\psi\|_{L^2_{m+4,\bT^s}} \right\}, \]
the result follows.
\end{proof}

\subsection{Applying the implicit function theorem}\label{ApplIFT}

In this section we will complete the proof of our main result, Theorem \ref{Thm:MainResult}, 
by using the parameter-dependent Implicit Function Theorem (Theorem \ref{IFT}). 

\begin{proof}[Proof of Theorem \ref{Thm:MainResult}]
For all $k \gg 0$ and sufficiently large $n$, the ``parametrised extremal metric operator''
\[
\AEMO^{\Theta, \oR} : L^2_{m+4,\bT^s} \times \mathbb{R}^{s+1} \to
L^2_{m,\bT^s} 
\]
satisfies
\begin{itemize}
\item $\AEMO^{{\bf 0}, 0} (0) = \mathcal{O}(k^{-n-2+(\dim
M)/2})$ in $L^2_{m,\bT^s} (g_k)$, by Lemma \ref{lem:UnifConv}.
\item Its linearisation at $\omega_{k,n}$, $L_{{\rm AEMO},\omega_{k,n}}^{\Theta, \oR} : L^2_{m+4, \bT^s} \times \bR^{s+1} \to L^2_{m, \bT^s}$, is a Banach space epimorphism
with right-inverse $I_{{\rm AEMO},\omega_{k,n}}^{\Theta, \oR}$, which is $\mathcal{O}(k^3)$ in operator norm by Theorem \ref{thm:InverseEst}.
\item There exists a constant $K$ such that for all sufficiently small $V$,
the non-linear piece $N^{\Theta,\oR}_k$ of $\AEMO^{\Theta, \oR}$ is Lipschitz with
constant $V$ on a ball about $0$ of radius $KV$. This follows from Proposition \ref{thm:nonlinEst}.
\item There is only \emph{one} $\bT^s$-action on $\PE$, generated by $Id_{E_1},\dots,Id_{E_s} \in \End (E)$.
This allows us to deal with the non-trivial co-kernel of $\LichOp$ by varying the parameters $(\Theta,\oR)
\in \bR^{s+1}$, see Remark \ref{rem:cokerLichOp}.
In the end, there is only \emph{one} choice for the parameters $(\Theta,\oR)$,
since $\oC+\oR$ in equation \eqref{eq:ExtrEqKaehlerPotPara} is a topological constant (the average scalar curvature), and the parameters $\Theta$ are determined by the Futaki invariant which by Definition \ref{def:EVF} is dual---with respect to the Futaki-Mabuchi inner product---to the extremal vector field.
\end{itemize}
By the Implicit Function Theorem (Theorem \ref{IFT}), the second and third points above
imply that the radius $\delta'_k$ of
the ball about the origin on which $N^{\Theta,\oR}_k$ is Lipschitz with constant
$(2 \| I_{{\rm AEMO},\omega_{k,n}}^{\Theta, \oR} \|)^{-1}$, is bounded below by $C k^{-3}$ for some 
constant $C>0$. Since $\delta_k=\delta'_k (2 \| I_{{\rm AEMO},\omega_{k,n}}^{\Theta, \oR} \|)^{-1}$, it follows that $\delta_k$
is bounded by $C k^{-6}$ for some constant $C>0$.

Looking at Theorem \ref{IFT}, we see that for $\varrho
\in L^2_{m,\bT^s}$ with $\|\AEMO^{{\bf 0}, 0} (0) - \varrho \|_{L^2_{m,\bT^s}
(g_k)} \leq C k^{-6}$,
the equation $\AEMO^{\Theta, \oR} (\phi) =\varrho$ has a solution.
The first of the above properties implies then, that for sufficiently large $n$ and
$k \gg 0$, the equation $\AEMO^{\Theta, \oR} (\phi)=0$ has
a solution $(\phi,\Theta,\oR)$ with $\phi \in L^2_{m+4,\bT^s}
(g_k)$, 
where the parameters $\Theta$ and $\oS$ are determined
as in the fourth point above.

Provided $m$ is big enough such that $L^2_{m+4,\bT^s} \hookrightarrow C^{2,\alpha}_{\bT^s}$,
applying the regularity Lemma \ref{lem:MainReg} from below iteratively shows that $\phi$
is smooth.

\end{proof}

In order to carry out our arguments above, we still need to establish a \emph{regularity result} about
extremal K\"ahler metrics. This will ensure that the $\bT^s$-invariant K\"ahler potential $\phi$, found in Section \ref{ApplIFT}, is smooth.

As already mentioned in equation \eqref{eq:ELCalabieq}, a K\"ahler metric $g$ on a (compact) K\"ahler manifold $(M,J,g,\omega)$ is \emph{extremal}
if the gradient of its scalar curvature
$\nabla_g Scal(g)$ preserves the complex structure $J$, i.e. it is the real part of a holomorphic section
of $T^{1,0} M$. So, instead of using equation~\eqref{eq:ExtrCond1}, another condition for a K\"ahler metric to be extremal is
\begin{equation}\label{CalELeq}
\mathcal{L}_{\nabla_g Scal(g)} J=0,
\end{equation}
where $\mathcal{L}$ denotes the Lie-derivative.

The extremal K\"ahler metric we constructed in Theorem \ref{Thm:MainResult} therefore satisfies Equation \eqref{CalELeq}, and we will use this equation to prove the 
following regularity result (similar results were already proven in \cite[Lemma~2.3]{F} and 
\cite[Proposition~4]{LS1}).
\begin{lemma}\label{lem:MainReg}
If the K\"ahler metric $g_{\phi}$ corresponding to $\omega_{\phi}=\omega+i \dbar \partial
\phi$, on a compact K\"ahler manifold,
is \emph{extremal} with $\phi \in C^{m,\alpha}$, $m \geq 2$,
then $\phi \in C^{m+3,\alpha}$.
\end{lemma}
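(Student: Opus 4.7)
The strategy exploits the fact that the extremal condition forces the $(1,0)$-gradient of $Scal(\omega_\phi)$ to be a holomorphic, hence a priori smooth, vector field on the fixed smooth complex manifold $(M,J)$. This a priori smooth vector field can then be used to bootstrap the regularity of $Scal(\omega_\phi)$, and in turn of $\phi$, via the quasilinear elliptic structure of the scalar curvature equation.

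First, I would reformulate the extremal condition locally. On $(M,J,g_\phi,\omega_\phi)$, the condition $\mathcal{L}_{\nabla_{g_\phi} Scal(g_\phi)} J = 0$ is equivalent to the statement that the $(1,0)$-gradient
\[
V^{1,0} := g_\phi^{i\bar k}\, \partial_{\bar k} Scal(\omega_\phi)\, \partial_i
\]
is a holomorphic section of $T^{1,0} M$, i.e.\ $\dbar V^{1,0} = 0$. Since the holomorphic structure on $T^{1,0}M$ is induced by the fixed smooth complex structure $J$, elliptic regularity for $\dbar$ (every distributional solution is classically smooth) yields $V^{1,0} \in C^\infty(M, T^{1,0} M)$, independently of the finite regularity of $\phi$.

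Next I would bootstrap the regularity of $Scal$ and then of $\phi$. From the identity $\partial_{\bar k} Scal(\omega_\phi) = g_{\phi, i \bar k}\, (V^{1,0})^i$, the hypothesis $g_\phi \in C^{m-2,\alpha}$ together with the smoothness of $V^{1,0}$ gives $\partial Scal(\omega_\phi) \in C^{m-2,\alpha}$, hence $Scal(\omega_\phi) \in C^{m-1,\alpha}$. The equation $Scal(\omega_\phi) = u$, with $u \in C^{m-1,\alpha}$ now known, is a fourth-order quasilinear elliptic PDE in $\phi$, whose linearisation $L_{Scal,\omega_\phi}$ (Lemma~\ref{lem:LinScalNeu}) has leading part the Lichnerowicz operator $\LichOp$ with principal symbol proportional to $|\xi|_{g_\phi}^4$. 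Interior Schauder estimates for the differentiated equation, combined with a finite patching argument over $M$, then upgrade $\phi$ to $C^{m+3,\alpha}$; the precise gain of three derivatives (rather than four) reflects the one-derivative bottleneck coming from the merely $C^{m-2,\alpha}$ regularity of $g_\phi$ entering both the coefficients and the recovery of $Scal$ from $V^{1,0}$.

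The hard part is making the first step rigorous at low regularity, particularly when $m = 2$ and $Scal(\omega_\phi)$ is only defined distributionally because $g_\phi$ is merely $C^{0,\alpha}$. One must interpret the extremal condition as the distributional equation $\dbar V^{1,0} = 0$, so that $V^{1,0}$ exists a priori only as a distributional section of $T^{1,0}M$; smoothness then follows from the classical regularity theorem for $\dbar$ on a smooth holomorphic bundle. With $V^{1,0}$ thus upgraded to $C^\infty$, the subsequent Schauder bootstrap proceeds as sketched and produces the claimed $\phi \in C^{m+3,\alpha}$, which can then be iterated in the proof of Theorem~\ref{Thm:MainResult} to conclude smoothness.
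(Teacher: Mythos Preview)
Your key insight---that the extremal condition forces the $(1,0)$-gradient of $Scal(\omega_\phi)$ to be holomorphic, hence smooth on the fixed complex manifold, which then upgrades $Scal(\omega_\phi)$ to $C^{m-1,\alpha}$ via pairing with the $C^{m-2,\alpha}$ metric---is exactly what the paper uses. The two proofs diverge only in how the bootstrap is carried out afterwards.

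The paper does not treat $Scal(\omega_\phi)=u$ as a single fourth-order quasilinear equation. Instead it factors it into two second-order elliptic problems: writing $U=-\log\det(g+\Phi)$, one has $\Delta_{g_\phi}U=Scal(\omega_\phi)$, so standard second-order Schauder with $C^{m-2,\alpha}$ coefficients and $C^{m-1,\alpha}$ right-hand side gives $U\in C^{m,\alpha}$; then the fully nonlinear second-order elliptic equation $-\log\det(g+\Phi)=U$ yields $\phi\in C^{m+2,\alpha}$. Feeding this back once more (now $\Delta_{g_\phi}$ has $C^{m,\alpha}$ coefficients) gives $U\in C^{m+1,\alpha}$ and hence $\phi\in C^{m+3,\alpha}$. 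Your direct fourth-order approach is correct in principle, but as written the sentence ``gain of three derivatives'' hides an iteration: with $C^{m-2,\alpha}$ coefficients, fourth-order Schauder only gives $\phi\in C^{m+2,\alpha}$ at the first pass, and one must repeat the argument to reach $C^{m+3,\alpha}$. The paper's factorisation makes this two-step structure explicit and avoids invoking Schauder theory for fourth-order quasilinear operators, relying only on the more classical second-order theory (as in Aubin's book, cited in the paper).
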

\begin{proof} 
We follow the proof of \cite[Lemma~2.3]{F}.
For an extremal K\"ahler metric $g$,
the gradient of the scalar curvature $\nabla_g Scal(g)$ is the real part of a holomorphic vector field, hence it is \emph{real-analytic}.
It therefore follows that the metric dual of $\nabla_{g_{\phi}} Scal(g_{\phi})$, i.e. $d Scal (g_{\phi})$ is of class $C^{m-2,\alpha}$ (as the metric $g_{\phi}$ corresponding to $\omega_{\phi}$
is of class $C^{m-2,\alpha}$); so $Scal(g_{\phi})$ 
is therefore of class $C^{m-1,\alpha}$.

Now, $Scal(g_{\phi})= \Delta_{g_{\phi}} U$, where $\Delta_{g_{\phi}}$ is the $g_{\phi}$-Laplacian
and
\[U = - \log \det (g + \Phi),\]
where $\Phi$ is the real symmetric tensor corresponding to the (1,1)-form
$i \dbar \partial \phi$, and $g$ is the K\"ahler metric corresponding to $\omega$.

Since $\phi \in C^{m,\alpha}$, $\Delta_{g_{\phi}}$ is a linear second order elliptic
operator with coefficients in $C^{m-2,\alpha}$. By standard elliptic regularity
results (cf. \cite[Theorem~3.59]{Aub}) and since $Scal(g_{\phi}) \in C^{m-1,\alpha}$,
we get $U \in C^{m,\alpha}$.

The map $\phi \mapsto - \log \det (g+\Phi)$ is non-linear, but also second
order and elliptic. Therefore, it also satisfies an elliptic 
regularity result 
(cf. \cite[Theorem~3.56]{Aub}), hence $\phi \in C^{m+2,\alpha}$.

Therefore, $\Delta_{g_{\phi}}$ has $C^{m,\alpha}$-coefficients;
hence $U \in C^{m+1,\alpha}$ and $\phi \in C^{m+3,\alpha}$.
\end{proof}



\

\noindent
\textsc{D\'epartement de Math\'ematiques, Universit\'e Libre de Bruxelles, CP 218, Boulevard du Triomphe, B-1050 Bruxelles, Belgium}\\
\textit{Email:} Till ``\emph{dot}'' Broennle ``\emph{at}'' ulb.ac.be

\end{document}